	\newcommand{\R}{{\mathbb R}}
	\newcommand{\RR}{{\mathbb R}}
	\newcommand{\vphi}{\varphi}
	\newcommand{\eps}{\varepsilon}
	\newtheorem{theo}{Theorem}
	\newtheorem{lemma}[theo]{Lemma}
	\newtheorem{proposition}[theo]{Proposition}
	\newtheorem{corollary}[theo]{Corollary}
	\theoremstyle{remark}
	\newtheorem*{rem}{Remark}
\begin{document}
\title{Spectral gaps, symmetries and log-concave perturbations}
\author{Franck Barthe and Bo'az Klartag}
\date{}
\maketitle

\abstract{We discuss situations where perturbing a probability measure on $\RR^n$ does not deteriorate its Poincar\'e constant by  much.
A particular example is the symmetric exponential measure in $\RR^n$,  even log-concave perturbations of which  have Poincar\'e constants that grow at most  logarithmically with  the dimension.
This leads to estimates for the Poincar\'e constants of $(n/2)$-dimensional sections of the unit ball of $\ell_p^n$ for $1 \leq p \leq 2$, which are optimal up to logarithmic factors.
 We also consider symmetry properties of the eigenspace of the  Laplace-type operator associated with a log-concave measure. Under symmetry assumptions we show that the dimension of this space is exactly $n$, and we exhibit a certain interlacing between the ``odd'' and ``even'' parts of the spectrum.
}

\section{Introduction}
This work was partly motivated by the study 
 of a family of probability measures on $\mathbb R^n$ which
naturally appear when considering statistical questions pertaining to sparse linear modeling:
\[ d\nu^{n,Q}(x)=\frac1Z e^{-\|x\|_1-Q(x)}\, dx, \]
where $Q$ is a nonnegative quadratic form,  $\|x\|_p=(\sum_i |x_i|^p)^{1/p}$, and $Z=Z_{n,Q}$ is the normalizing constant so that $\nu^{n,Q}$ is a probability measure. The latter is related to the classical functional $\theta\mapsto \|y-X\theta\|_2^2+\lambda \|\theta\|_1$
that one minimizes in order to find the LASSO estimator, see e.g. \cite{bvdgBOOK}. Here the quadratic
term is supposed to ensure a good fit to data $y$, while minimizing the $L_1$ norm favours a small support for the estimator $\theta$.

For a probability measure $\mu$ on $\mathbb R^n$, we denote by $C_P(\mu)$ the Poincar\'e constant of $\mu$, that is the least constant $C$ such that the following inequality holds for all locally Lipschitz functions $f:\mathbb R^n\to \mathbb R$:
\begin{equation} \mathrm{Var}_{\mu} (f)\le C \int_{\mathbb R^n} |\nabla f|^2 d\mu. \label{eq_106} \end{equation}
Here $\mathrm{Var}_{\mu} (f)=\int (f-\int f\, d\mu)^2 d\mu$ if $f\in L_2(\mu)$, and $+\infty$ otherwize, denotes the variance of $f$ with respect to $\mu$. Such Poincar\'e inequalities, when they hold, allow to quantify concentration properties of $\mu$ as well as relaxation properties of associated Langevin dynamics, see e.g. \cite{BGLbook}.
\medskip

 A natural question, posed to us by S. Gadat, is whether the Poincar\'e constant of $\nu^{n,Q}$ can be upper bounded independently of the quadratic form $Q$. This seems plausible, as the addition of $Q$ only makes the measure more log-concave and more localized around the origin. But making this intuition rigorous is far from obvious.
A more demanding question is whether $C_P(\nu^{n,Q})$ is maximal when $Q=0$.
Observe that  $\nu^{n,0}=\nu^n$ is the $n$-fold product of the Laplace distribution on $\mathbb R$, $d\nu(t)=\exp(-|t|)\, dt/2$. By the tensorization property of Poincar\'e inequalities, we have $C_P(\nu^n)=C_P(\nu)= 4$ (see Lemma 2.1 in \cite{bobkov-ledoux} for
$C_P(\nu)\le 4$, the converse inequality is checked with exponential test functions).  A positive answer to the latter question would imply that $C_P(\nu^{n,Q})$ is upper bounded by 4, independently of the dimension and of the nonnegative  quadratic form $Q$.
We cannot establish this bound, but we provide results in this direction which apply to more general settings, while putting forward the relevent features of the problem as symmetry, log-concavity and appropriate comparison with the Gaussian case. A sample result is stated next:

\begin{theo}\label{th:Exp-poincare}
	Let $n \geq 2$ and let $F: \RR^n \rightarrow \RR$ be an even, convex function and let $1 \leq p \leq 2$.
	Consider the probability measure $\mu$  
	 on $\RR^n$ given by
	$$ d \mu(x) = \frac{1}{Z} e^{-\| x \|_p^p - F(x)}  dx $$
	where $Z$ is a normalizing constant.
	Then,
	\[ C_P(\mu) \le C (\log n)^{\frac{2-p}{p}},\]
	where $C$ is a universal constant.
\end{theo}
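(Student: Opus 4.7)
I would treat the three regimes $p=2$, $p=1$, and $1<p<2$ separately. For $p=2$, the potential $V(x):=\|x\|_2^2+F(x)$ satisfies $\nabla^2V\succeq 2I$ by convexity of $F$, and the Brascamp--Lieb inequality yields $C_P(\mu)\le 1/2$, which is controlled by $C(\log n)^0$. For $p=1$, writing $d\mu\propto e^{-F(x)}\,d\nu^n(x)$ exhibits $\mu$ as an even log-concave perturbation of the product symmetric exponential $\nu^n$, and the paper's perturbation theorem for $\nu^n$ (announced in the abstract) directly gives $C_P(\mu)\le C\log n$. Both cases match the claimed bound at the endpoints.

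The substantive case is the interpolation $1<p<2$. The key observation is that the Hessian
\[
\nabla^2V(x)\;\succeq\;p(p-1)\,\operatorname{diag}\bigl(|x_i|^{p-2}\bigr)
\]
is bounded below by $p(p-1)R^{p-2}I$ on the slab $K_R:=\{x\in\RR^n:\|x\|_\infty\le R\}$, since $p-2<0$. So $V$ is $cR^{p-2}$-strongly convex there, and the Brascamp--Lieb principle applied locally to $K_R$ suggests a Poincar\'e contribution of order $R^{2-p}$. The choice $R\asymp(\log n)^{1/p}$ reproduces the advertised rate $(\log n)^{(2-p)/p}$.

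The main obstacle is upgrading this local estimate to a global Poincar\'e inequality on $\RR^n$. A first ingredient is concentration of $\mu$ on $K_R$: normalizing so that $F(0)=0$ (possible since $0$ is the minimum of the even convex $F$) yields $F\ge 0$, so the density of $\mu$ is pointwise dominated, up to the normalizing constant, by that of the product measure $e^{-\|x\|_p^p}dx$, whose one-dimensional marginals have tails at most $e^{-cR^p}$. A union bound then gives $\mu(K_R^c)\le Cn\,e^{-cR^p}$, negligible for the chosen $R$. The final gluing step --- combining the local strong convexity on $K_R$ with this tail estimate into a genuine global Poincar\'e inequality --- is the technical heart of the argument. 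Natural routes include constructing an auxiliary even log-concave measure that agrees with $\mu$ on $K_R$ and dominates $\mu$ outside it, so that the $p=1$ perturbation framework reapplies, or invoking the equivalence --- valid for log-concave measures --- between the Poincar\'e constant and exponential concentration for Lipschitz functions.
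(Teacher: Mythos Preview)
Your overall architecture for $1<p<2$ --- restrict to the cube $K_R=\{\|x\|_\infty\le R\}$, exploit strong convexity there, then transfer back to $\mu$ via E.~Milman's log-concave stability under total variation --- is close in spirit to the paper's, which also truncates to such a cube and invokes E.~Milman. However there are two genuine gaps.

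First, your $p=1$ case is circular: the ``perturbation theorem for $\nu^n$ announced in the abstract'' \emph{is} the $p=1$ instance of the statement you are proving; there is no separate result to cite. And your local-strong-convexity mechanism collapses at $p=1$ since $p(p-1)=0$, so you have no argument for this endpoint. Second, the tail bound $\mu(K_R^c)\le Cn\,e^{-cR^p}$ does not follow from pointwise domination of densities. From $F\ge 0$ you get $e^{-\|x\|_p^p-F}\le e^{-\|x\|_p^p}$, but the normalizing constant $Z=\int e^{-\|x\|_p^p-F}$ can be arbitrarily small (take $F(x)=M|x|^2$ with $M\to\infty$), so $\tfrac{1}{Z}e^{-\|x\|_p^p-F}$ is \emph{not} bounded pointwise by a universal multiple of the product density, and the union bound you sketch does not go through. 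The desired tail is true, but it needs a real correlation inequality: the paper uses that $\nu_p$ is a Gaussian mixture for $p\in[1,2]$, and then the Eskenazis--Nayar--Tkocz extension of Royen's inequality yields $\int c\,d\mu\le\int c\,d\nu_p^{\otimes n}$ for every even convex $c$; applying this with $c(x)=\exp\bigl(\varepsilon(\max_i|x_i|)^p\bigr)$ gives the required bound.

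The paper's handling of the truncated measure also differs from yours and is what makes $p=1$ work. Rather than strong convexity on $K_R$, it uses the Gaussian-mixture representation to derive, for \emph{odd} test functions, a weighted inequality $\mathrm{Var}(f)\le\int\sum_i\alpha_i(x_i)(\partial_i f)^2\,d\mu$ with $\alpha_i(t)\le c(1+|t|^{2-p})$, and then invokes the fact (proved earlier in the paper) that for an even log-concave measure the Poincar\'e constant is realized on odd functions. Your direct Brascamp--Lieb route is more elementary when $1<p<2$, but the mixture argument is needed to cover the endpoint.
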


We do not know whether the logarithmic factor in
Theorem \ref{th:Exp-poincare} is necessary. Up to this logarithmic factor,
this theorem provides a positive answer to the above question, since a non-negative quadratic function $Q$ is an even, convex function. Note that in the case where $p=2$, there is no logarithmic factor in Theorem \ref{th:Exp-poincare}, yet in this case the Theorem is well-known and it holds true without  the assumption that $F$ is an even function  (see Corollary \ref{cor:BL} below).
The case where $p\in [1,2)$ is harder, and
relies on techniques from the study of log-concave measures. Using a result of Kolesnikov-Milman \cite{kolesnikov-milman} 
that allows to compare Poincar\'e constants of log-concave functions and their level sets, we obtain the following:
\begin{corollary} Let $n \geq 2$ and $p \in [1,2]$.
	Let $E \subseteq \RR^n$ be a linear subspace, and set $\kappa = \dim(E) / n$. Then,
	\[ C_P\big(\lambda_{B_p^n\cap E}\big) \le c(\kappa) \cdot \log^{\frac{2}{p}} (n) \cdot \sup_{0 \neq \theta \in \RR^n} \int_{ B_p^n\cap E} \left \langle x, \frac{\theta}{|\theta|} \right \rangle^2 d \lambda_{B_p^n\cap E}(x),\]
	where $c(\kappa)$ depends solely on $\kappa \in [0,1]$, where $B_p^n = \{ x \in \RR^n \, ; \, \sum_i |x_i|^p \leq 1 \}$, and where $\lambda_{B_p^n \cap E}$ is the uniform probability measure on the section $B_p^n \cap E$. \label{cor1}
\end{corollary}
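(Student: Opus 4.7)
The plan is to combine Theorem~\ref{th:Exp-poincare} with the Kolesnikov--Milman comparison \cite{kolesnikov-milman} between the Poincar\'e constants of log-concave measures and those of the uniform measures on their level sets. In outline: Theorem~\ref{th:Exp-poincare} is first adapted to the subspace $E$, producing a log-concave probability measure on $E$ with a controlled Poincar\'e constant; after rescaling, the body $B_p^n\cap E$ appears as a typical level set of this measure, and Kolesnikov--Milman transfers the Poincar\'e inequality from the measure to $\lambda_{B_p^n\cap E}$.

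To pass from $\R^n$ to the subspace $E$, I would use a convex penalization. For $M>0$, the function $F_M(x) := M\,|P_{E^\perp}x|^2$ is even and convex, so Theorem~\ref{th:Exp-poincare} applies to the probability measure $\mu_M$ on $\R^n$ with density proportional to $e^{-\|x\|_p^p - F_M(x)}$ and yields $C_P(\mu_M)\le C(\log n)^{(2-p)/p}$ uniformly in $M$. Testing the Poincar\'e inequality against functions of the form $f\circ P_E$ and letting $M\to\infty$, the measures $\mu_M$ concentrate on $E$ and a weak-limit argument gives $C_P(\nu)\le C(\log n)^{(2-p)/p}$ for the measure $\nu$ on $E$ with density proportional to $e^{-\|x\|_p^p}$ restricted to $E$.

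Then I would rescale: for $t>0$, the measure $\nu_t$ on $E$ with density proportional to $e^{-t\|x\|_p^p}$ satisfies $C_P(\nu_t)=t^{-2/p}C_P(\nu)$. The natural choice $t\asymp \dim E = \kappa n$ places $B_p^n\cap E = \{x\in E : t\|x\|_p^p \le t\}$ at the typical level set of the potential under $\nu_t$; Kolesnikov--Milman's theorem then bounds $C_P(\lambda_{B_p^n\cap E})$ by a constant depending on $\kappa$ times $C_P(\nu_t) \le c(\kappa)\,n^{-2/p}(\log n)^{(2-p)/p}$. It remains to rewrite the $n^{-2/p}$ factor in terms of the sup-covariance quantity $\sup_\theta \int\langle x,\theta/|\theta|\rangle^2\,d\lambda_{B_p^n\cap E}$, allowing for a logarithmic cost.

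The main obstacle, in my view, is the precise invocation of Kolesnikov--Milman: one has to ensure that the body $B_p^n\cap E$ sits at the right concentration shell of the rescaled log-concave measure $\nu_t$, and one has to account for the extra factor of $\log n$ that bridges the exponent $(2-p)/p$ of Theorem~\ref{th:Exp-poincare} with the exponent $2/p$ in the Corollary. This extra factor should be absorbed either through a mild slack in Kolesnikov--Milman's comparison or in converting the natural dimensional scale $n^{-2/p}$ to the stated sup-covariance quantity, which in turn requires a lower bound on the maximal directional variance of $\lambda_{B_p^n\cap E}$.
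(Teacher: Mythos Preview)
Your overall strategy---restrict Theorem~\ref{th:Exp-poincare} to $E$ by an even convex penalty, pass to the limit, rescale, and invoke Kolesnikov--Milman---is exactly the paper's route (Theorem~\ref{theo:ball-section}). The paper uses the indicator of an $\varepsilon$-neighbourhood of $E$ rather than your quadratic $F_M$, but this is cosmetic.

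The execution of the last step differs, and the paper's version dissolves precisely the obstacle you flag. You propose to rescale so that $B_p^n\cap E$ sits at the ``typical'' level, bound $C_P(\lambda_{B_p^n\cap E})$ by $c(\kappa)\,n^{-2/p}(\log n)^{(2-p)/p}$, and then convert $n^{-2/p}$ into the sup-covariance via a lower bound on the maximal directional variance of the section. The paper avoids this entirely. It rescales so that the potential satisfies $\min V=0$ (equivalently, the density equals $1$ at the origin), which is the normalization required by Theorem~\ref{theo:level-set}; this forces the scaling factor $\lambda_E=Z_E^{-1/d}$ with $Z_E=\int_E e^{-\|\alpha_p x\|_p^p}\,d^Ex$, and an elementary inradius argument gives $Z_E^{-2/d}\le c\,(n/d)^{2/p-1}$. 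Then Kolesnikov--Milman supplies \emph{two} pieces of information about some level set $K$ (necessarily a dilate of $B_p^n\cap E$): an upper bound $C_P(\lambda_K)\le C\,C_P(\mu)\log(e+C_P(\mu)\sqrt d)$ \emph{and} a universal lower bound $C_P(\lambda_K,\text{``linear''})\ge c>0$. Since the ratio $C_P/C_P(\text{``linear''})$ is invariant under dilation, one reads off the bound for $B_p^n\cap E$ directly---no separate covariance estimate is needed. The missing logarithm you were looking for is exactly the factor $\log(e+C_P(\mu)\sqrt d)\asymp\log n$ built into the Kolesnikov--Milman comparison, which promotes $(\log n)^{(2-p)/p}$ to $(\log n)^{2/p}$.
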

This provides a partial confirmation, up to a logarithmic term, of a famous conjecture of Kannan, Lov\'asz and Simonovits, which we recall
in Section \ref{sec:KLS}.

\medskip

 In Section \ref{sec_GM} we
present additional related results, and in particular a slightly more general version of the above results, see Theorem \ref{th:GM-poincare}.
 The proofs in Section \ref{sec_GM} rely on ideas from the recent Gaussian-mixtures analysis of Eskenazis, Nayar and Tkocz \cite{ENT-mixtures},
and on the fact going back to \cite{klartag-unconditional}, that the first non-trivial eigenfunction is an odd function under convexity and symmetry assumptions. This fact is revisited here, and in particular we prove the following  interlacing result for the spectrum of the Laplace-type operator associated with an even, log-concave measure. A function $f: \RR^n \rightarrow [0, \infty)$ is log-concave if the set where it is positive is convex, and $-\log f$ is a convex function on this set.

\begin{theo}\label{th:interlace}
	Let $\mu$ be a finite measure with a log-concave density in $\RR^n$. Assume that $\mu$ is even. Then  in the definition  (\ref{eq_106}) it suffices to consider odd functions, i.e., denoting $\lambda_P(\mu) = 1 / C_P(\mu)$ we have
	$$ \lambda_P(\mu) = \lambda_P(\mu, ``odd") := \inf_{f: \RR^n \rightarrow \RR \textrm{ is odd}} \frac{\int_{\RR^n} |\nabla f|^2 d \mu}{\mathrm{Var}_{\mu}(f)}, $$
	where the infimum runs over all locally-Lipschitz, odd functions $f \in L^2(\mu)$ with $f \not \equiv 0$.
	
	\medskip
	Moreover, the even functions do not lag too far behind in the spectrum. Specifically, for any $(n+1)$-dimensional subspace $E \subseteq L^2(\mu)$ of locally-Lipschitz, odd functions we have
	$$ \lambda_P(\mu, ``even") := \inf_{f: \RR^n \rightarrow \RR \textrm{ is even}} \frac{\int_{\RR^n} |\nabla f|^2 d \mu}{\mathrm{Var}_{\mu}(f)}
	\leq \sup_{0 \not \equiv f \in E}
	\frac{\int_{\RR^n} |\nabla f|^2 d \mu}{\mathrm{Var}_{\mu}(f)}, $$
	where the infimum runs over all locally-Lipschitz, even functions $f \in L^2(\mu)$ with $f \not \equiv Const$.
\end{theo}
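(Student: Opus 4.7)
The plan is to study the symmetric diffusion operator $L = \Delta - \nabla V \cdot \nabla$ associated with $d\mu = e^{-V}\,dx$, where $V$ is even and convex. Since $V$ is even, $L$ commutes with the reflection $x \mapsto -x$ and preserves the orthogonal decomposition $L^2(\mu) = L^2_{\mathrm{odd}}(\mu) \oplus L^2_{\mathrm{even}}(\mu)$. After a regularization (e.g.\ convolving the density with a small even Gaussian and adding a small strongly convex term, then letting the parameters tend to zero), I may assume $L$ has a compact resolvent on each subspace, with complete orthonormal eigenbases and eigenvalues $0 < \lambda_1^o \leq \lambda_2^o \leq \cdots$ and $0 < \lambda_1^e \leq \lambda_2^e \leq \cdots$ on the odd and even (mean-zero) sides. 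The driving ingredient is the Bochner/Bakry--Emery identity
\[
\int (Lf)^2\,d\mu = \int |\nabla^2 f|^2\,d\mu + \int \langle (\nabla^2 V)\nabla f, \nabla f\rangle\,d\mu,
\]
which together with $\nabla^2 V \geq 0$ (log-concavity) yields the crucial inequality $\int |\nabla^2 f|^2\,d\mu \leq \int (Lf)^2\,d\mu$.

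For the first assertion, let $\phi$ be an even eigenfunction realizing $\lambda_1^e$, so $L\phi = -\lambda_1^e \phi$ and $\int \phi\,d\mu = 0$. Each $\partial_i \phi$ is odd. Summing the Bochner bound over $i$ and using $\int (L\phi)^2 d\mu = (\lambda_1^e)^2 \int \phi^2 d\mu = \lambda_1^e \int |\nabla \phi|^2 d\mu$,
\[
\sum_{i=1}^{n} \int |\nabla \partial_i \phi|^2\,d\mu = \int |\nabla^2 \phi|^2\,d\mu \leq \lambda_1^e \sum_{i=1}^{n} \int (\partial_i \phi)^2\,d\mu.
\]
Hence at least one index $i$ satisfies $\int |\nabla \partial_i \phi|^2 d\mu \leq \lambda_1^e \int (\partial_i \phi)^2 d\mu$, producing an odd test function witnessing $\lambda_P(\mu, ``odd") \leq \lambda_1^e = \lambda_P(\mu, ``even")$. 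Since the reverse inequality $\lambda_P(\mu) \leq \lambda_P(\mu, ``odd")$ is immediate (odd functions are $\mu$-mean zero by symmetry), the two are equal.

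The interlacing runs the same mechanism in reverse. Fix an $(n+1)$-dimensional subspace $E$ of locally-Lipschitz odd functions and set $\Lambda := \sup_{0 \neq f \in E} \int |\nabla f|^2 d\mu / \mathrm{Var}_\mu(f)$; Courant--Fischer then forces $\lambda_{n+1}^o \leq \Lambda$, so it suffices to prove $\lambda_1^e \leq \lambda_{n+1}^o$. Choose orthonormal odd eigenfunctions $\phi_1^o, \ldots, \phi_{n+1}^o$ and take a nonzero combination $\psi = \sum_{k=1}^{n+1} c_k \phi_k^o$ solving the $n$ linear conditions $\int \partial_i \psi\,d\mu = 0$ for $i = 1, \ldots, n$ (existence by dimension count). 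Then each $\partial_i \psi$ is even with mean zero, and orthonormality yields
\[
\int (L\psi)^2 d\mu = \sum_k c_k^2 (\lambda_k^o)^2 \leq \lambda_{n+1}^o \sum_k c_k^2 \lambda_k^o = \lambda_{n+1}^o \int |\nabla \psi|^2 d\mu.
\]
Summing the Bochner inequality over $i$ exactly as in the first part, some $\partial_i \psi$ emerges as an even, mean-zero function whose Rayleigh quotient is at most $\lambda_{n+1}^o \leq \Lambda$, proving $\lambda_P(\mu, ``even") \leq \Lambda$.

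The main obstacle I anticipate is the spectral setup: for a general finite log-concave measure the existence of a smooth eigenbasis for $L$ and the strict validity of the Bochner identity are not automatic. I would handle this by approximation with smooth strongly log-concave even measures (where both are classical), carrying out the argument there, and passing to the limit using continuity of Poincar\'e-type constants. The log-concavity and symmetry hypotheses enter the substance of the argument only through the two elementary facts that $\nabla^2 V \geq 0$ and $L$ commutes with $x \mapsto -x$.
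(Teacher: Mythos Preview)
Your argument is correct, and shares with the paper the essential use of the Bochner inequality $\int |\nabla^2 f|^2\,d\mu \le \int (Lf)^2\,d\mu$ coming from $\nabla^2 V \ge 0$. The route, however, is genuinely different. You work with eigenfunctions: for the first part you differentiate a first even eigenfunction to get odd test functions, and for the second part you pass through $\lambda_{n+1}^o$ via Courant--Fischer and then prove the universal interlacing $\lambda_1^e \le \lambda_{n+1}^o$ by choosing a combination of the first $n{+}1$ odd eigenfunctions whose gradient has zero mean. The paper instead introduces the dual norm $\|g\|_{H^{-1}(\mu)}$ and proves a variance bound $\mathrm{Var}_\mu(f) \le \sum_i \|\partial_i f\|_{H^{-1}(\mu)}^2$ valid for \emph{any} $f$ with $\int \partial_i f\,d\mu = 0$ (this is where Bochner is hidden). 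With that in hand, both parts follow directly: for an even $f$ the $\partial_i f$ are odd and one uses $\|\cdot\|_{H^{-1}}^2 \le C_P(\mu,\text{``odd''})\|\cdot\|_{L^2}^2$; for the second part one picks $f$ in the given subspace $E$ itself with $\int \nabla f\,d\mu = 0$ and uses $\|\cdot\|_{H^{-1}}^2 \le C_P(\mu,\text{``even''})\|\cdot\|_{L^2}^2$, without ever mentioning $\lambda_{n+1}^o$ or any eigenbasis.

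What each approach buys: yours is more elementary in spirit (no dual norms), and it actually proves the slightly stronger abstract interlacing $\lambda_1^e \le \lambda_{n+1}^o$, but it requires a full spectral setup and the approximation step you flag. That step is not entirely routine: the paper devotes an appendix to the analogous reduction (smoothing a general log-concave density while controlling $H^{-1}$ quantities), and passing $\sup_{f\in E} R_{\mu_\varepsilon}(f)$ to the limit for a fixed subspace $E\subset L^2(\mu)$ needs some care about whether $E$ embeds in $H^1(\mu_\varepsilon)$. The paper's $H^{-1}$ formulation sidesteps eigenfunctions altogether and works directly with the given $E$, so it is cleaner for the second assertion and needs less regularization machinery.
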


It is well-known that there exists log-concave measures, such as the Laplace distribution mentioned above, for which the infimum defining the Poincar\'e constant is {\it not} attained. Nevertheless, under mild regularity assumptions on $\mu$ it is known that an eigenspace $E_{\mu} \subseteq L^2(\mu)$ corresponding to the eigenvalue $\lambda_P(\mu)$ does exist, and by elliptic regularity the eigenfunctions are smooth. The eigenspace $E_{\mu}$ consists of all locally-Lipschitz functions $f \in L^2(\mu)$ with $\int f d \mu = 0$ for which
$$ \int_{\RR^n} f^2 d \mu = C_P(\mu) \cdot \int_{\RR^n} |\nabla f|^2 d \mu. $$
Given a measure $\mu$ on $\RR^n$ write $\mathcal O_n(\mu)$ for the group
of all linear isometries $R: \RR^n \rightarrow \RR^n$ with $R_* \mu = \mu$.
As an example, if $\mu$ has the symmetries of the cube  $[-1,1]^n$, then the group $\mathcal O_n(\mu)$ has at least $2^n \cdot n!$ elements, and it has no non-trivial invariant subspaces.

\begin{theo} Let $\mu$ be a log-concave probability measure on $\RR^n$ with $E_{\mu} \neq \{ 0 \}$. Assume that the group $\mathcal O_n(\mu)$ has no non-trivial invariant subspace in $\RR^n$. Moreover we make the regularity assumption that $\mu$ has a $C^2$-smooth, positive density $e^{-\psi}$
	and that the Hessian matrix of $\psi$ is non-singular at any point of $\RR^n$. Then $$ \dim E_\mu=n. $$
	Moreover, for any $f\in E_\mu\setminus\{0\}$,
	\[E_\mu=\mathrm{span}\big\{ f\circ R; \, R\in \mathcal O_n(\mu)\big\}.\]
	\label{theo_427_}
\end{theo}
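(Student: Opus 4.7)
The plan is to exhibit a $G$-equivariant linear isomorphism $\Phi\colon E_\mu\to\RR^n$, where $G := \mathcal{O}_n(\mu)$; once this is in place the theorem follows from representation theory together with the irreducibility hypothesis. Since each $R\in G$ is a Euclidean isometry leaving $\psi$ invariant, the drift Laplacian $L_\mu g = \Delta g - \langle \nabla\psi,\nabla g\rangle$ commutes with the natural representation $(R\cdot g)(x) := g(R^{-1}x)$ on $L^2(\mu)$, so $E_\mu$ is a finite-dimensional $G$-invariant subspace. The barycenter $m = \int x\, d\mu$ is $G$-fixed, and since $\RR^n$ is $G$-irreducible one has $m=0$; I may therefore set
\[\Phi(f) := \int_{\RR^n} x \, f(x)\, d\mu(x), \qquad f\in E_\mu,\]
which is $G$-equivariant by a change of variables using $R_*\mu=\mu$. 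A useful identity, obtained by testing the eigenvalue equation $L_\mu f = -\lambda_P(\mu)f$ against $x_i$ and integrating by parts, is
\[\int \partial_i f\, d\mu \;=\; \lambda_P(\mu)\,\Phi(f)_i \;=\; \int f\, \partial_i\psi\, d\mu.\]

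For surjectivity, the image $\Phi(E_\mu)$ is $G$-invariant and so, by irreducibility, is either $\{0\}$ or $\RR^n$. Suppose toward contradiction that $\Phi\equiv 0$. Then each $f\in E_\mu$ is $L^2(\mu)$-orthogonal to every linear function $\ell_\theta(x) := \langle\theta,x\rangle$, and also $\int\nabla f\, d\mu = 0$, so the cross terms in both the Dirichlet energy and the variance of $f+t\ell_\theta$ vanish:
\[\frac{\int|\nabla(f+t\ell_\theta)|^2 d\mu}{\mathrm{Var}_\mu(f+t\ell_\theta)} \;=\; \frac{\lambda_P(\mu)\int f^2 d\mu + t^2|\theta|^2}{\int f^2 d\mu + t^2\langle C\theta,\theta\rangle},\qquad C := \int xx^T\, d\mu.\]
The covariance $C$ commutes with $G$, so Schur's lemma yields $C=cI$ for some $c>0$, and the Poincar\'e inequality on $\ell_\theta$ gives $c\le C_P(\mu)$. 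If $c=C_P(\mu)$, the quotient above equals $\lambda_P(\mu)$ identically in $t$, whence $\ell_\theta\in E_\mu$ and $\Phi(\ell_\theta)=c\theta\ne 0$, a contradiction. The remaining case $c<C_P(\mu)$ I would rule out via a separate Brascamp--Lieb variance comparison that uses the strict positivity $\nabla^2\psi>0$.

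For injectivity, let $f\in\ker\Phi$; the identity above specialises to $\int\partial_i f\, d\mu = \int f\,\partial_i\psi\, d\mu = 0$ for all $i$. Combining these moment vanishings with the eigenfunction saturation $\mathrm{Var}_\mu(f) = C_P(\mu)\int|\nabla f|^2 d\mu$ and the Brascamp--Lieb inequality $\mathrm{Var}_\mu(f)\le\int\langle(\nabla^2\psi)^{-1}\nabla f,\nabla f\rangle d\mu$ --- the non-singularity assumption on $\nabla^2\psi$ entering essentially here --- should squeeze $f$ to zero. This is the main obstacle of the proof: the cheap moment vanishings do not on their own force $f\equiv 0$, and closing the gap requires exploiting strict log-concavity in a delicate way, for instance through the equality case in Brascamp--Lieb or through a companion Bochner identity.

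Finally, once $\Phi$ is shown to be a $G$-equivariant isomorphism, $E_\mu\cong\RR^n$ is itself $G$-irreducible. Hence, for any non-zero $f\in E_\mu$, the subspace $\mathrm{span}\{f\circ R : R\in G\}$ is a non-zero $G$-invariant subspace of $E_\mu$, which must equal $E_\mu$. This establishes the ``moreover'' clause.
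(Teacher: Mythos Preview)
Your overall architecture is exactly that of the paper: build a $G$-equivariant linear map $E_\mu\to\RR^n$ and use irreducibility. Your map $\Phi(f)=\int xf\,d\mu$ is, as you observe, proportional to the paper's $\theta(f)=\int\nabla f\,d\mu$, so there is no real difference there. The ``moreover'' clause is handled correctly.

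However, there are two genuine gaps, and they are linked.

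\textbf{Surjectivity is a red herring.} Your attempt to rule out $\Phi\equiv 0$ via Rayleigh quotients and the covariance $C=cI$ is both incomplete (you leave the case $c<C_P(\mu)$ open) and unnecessary. Once injectivity is established, the hypothesis $E_\mu\neq\{0\}$ gives $\Phi(E_\mu)\neq\{0\}$ directly, and irreducibility does the rest. So the entire surjectivity discussion should be deleted and replaced by one sentence.

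\textbf{Injectivity is the real content, and you have not supplied it.} You correctly identify that the Bochner formula is the tool, but you stop short of the computation. Here it is. Suppose $Lf=-\lambda_\mu f$ and $\int\partial_i f\,d\mu=0$ for all $i$ (equivalently $\Phi(f)=0$). Apply the Poincar\'e inequality to each $\partial_i f$ (which has zero mean), and then the Bochner identity $\int(Lf)^2\,d\mu=\sum_i\int|\nabla\partial_i f|^2\,d\mu+\int\langle D^2\psi\,\nabla f,\nabla f\rangle\,d\mu$:
\begin{align*}
\lambda_\mu\int f^2\,d\mu
&=\int|\nabla f|^2\,d\mu
=\sum_i\mathrm{Var}_\mu(\partial_i f)
\le\frac{1}{\lambda_\mu}\sum_i\int|\nabla\partial_i f|^2\,d\mu\\
&=\frac{1}{\lambda_\mu}\left(\int(Lf)^2\,d\mu-\int\langle D^2\psi\,\nabla f,\nabla f\rangle\,d\mu\right)
\le\frac{1}{\lambda_\mu}\int(Lf)^2\,d\mu
=\lambda_\mu\int f^2\,d\mu.
\end{align*}
All inequalities are therefore equalities; in particular $\int\langle D^2\psi\,\nabla f,\nabla f\rangle\,d\mu=0$. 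Since $D^2\psi(x)>0$ everywhere by hypothesis, $\nabla f\equiv 0$, so $f$ is constant, hence $f=0$. This is where the non-singularity of the Hessian is used, and it is not a ``delicate'' application of Brascamp--Lieb but a clean squeeze via Bochner. With this lemma in hand your proof is complete and matches the paper's.
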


The proofs of the last two results appear in Section \ref{sec_pc}, where an extended discussion and several other related results may be found.

\medskip
{\it Acknowledgement.} This paper is  based upon work supported by the National Science Foundation under Grant No. DMS-1440140 while two of the authors were in residence at the Mathematical Sciences Research Institute in Berkeley, California, during the Fall 2017 semester.

\section{Poincar\'e constants for log-concave measures}
\label{sec_pc}

\subsection{Perturbation principles}

We collect here several useful results on the Poincar\'e constants, dealing with various kinds of perturbations of measures.
We start with recalling the classical bounded perturbation principle. It  follows from the  representation formula $\mathrm{Var}_\mu(f)=\inf_{a\in \RR} \int (f-a)^2 d\mu$.
\begin{proposition}\label{prop:bounded-perturbation}
Let $\mu$ be a probability measures on $\mathbb R^n$ and let
$\nu(dx)=e^{V(x)} \mu(dx)$ be another probability measure.
If the function $V$ is bounded, then
\[ C_P(\nu)\le C_P(\mu)\,  e^{\mathrm{Osc}(V)},\]
where $\mathrm{Osc}(V)=\sup V-\inf V$ is called the oscillation of $V$.
\end{proposition}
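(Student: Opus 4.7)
The plan is to exploit the hinted variational formula $\mathrm{Var}_\mu(f)=\inf_{a\in\RR}\int (f-a)^2 d\mu$, which gives the freedom to center the function with respect to $\mu$ rather than with respect to $\nu$. Fix a locally Lipschitz $f$ and write $a=\int f\, d\mu$, so that $\int (f-a)^2 d\mu=\mathrm{Var}_\mu(f)$. Note also that since $\nu=e^V\mu$ is a probability measure and $\nu$ is absolutely continuous with respect to $\mu$ and vice versa, we have $d\mu=e^{-V}d\nu$.

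Next I would run the following three-line comparison. Using the infimum formula with the above choice of $a$, and bounding the Radon--Nikodym derivative pointwise,
\[ \mathrm{Var}_\nu(f) \le \int (f-a)^2\, d\nu = \int (f-a)^2 e^{V}\, d\mu \le e^{\sup V} \int (f-a)^2\, d\mu = e^{\sup V}\,\mathrm{Var}_\mu(f). \]
Now I apply the Poincar\'e inequality for $\mu$ to bound $\mathrm{Var}_\mu(f) \le C_P(\mu)\int |\nabla f|^2\, d\mu$, and then pass back to integration against $\nu$ via
\[ \int |\nabla f|^2\, d\mu = \int |\nabla f|^2 e^{-V}\, d\nu \le e^{-\inf V} \int |\nabla f|^2\, d\nu. \]
Chaining these two lines multiplies the factors to $e^{\sup V - \inf V}=e^{\mathrm{Osc}(V)}$, which gives $\mathrm{Var}_\nu(f)\le C_P(\mu)\,e^{\mathrm{Osc}(V)}\int |\nabla f|^2\, d\nu$, and taking the infimum of admissible constants yields the claim.

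There is no real obstacle here; the only subtle point is that one must \emph{not} center $f$ with respect to $\nu$ (which would force one to compare $\int(f-\int f\,d\nu)^2\, d\mu$ to $\mathrm{Var}_\mu(f)$ and lose information), and instead use the variational definition of the variance to pick any constant $a$, including the $\mu$-mean. The boundedness of $V$ is used only to ensure that both $e^{\sup V}$ and $e^{-\inf V}$ are finite, so that the argument is vacuous when $\mathrm{Osc}(V)=+\infty$. The same proof works without the assumption that $\mu$ and $\nu$ are probability measures (with the same constant), which is worth noting for later applications.
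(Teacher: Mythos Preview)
Your proof is correct and is exactly the argument the paper has in mind: the only hint given there is the representation formula $\mathrm{Var}_\mu(f)=\inf_{a\in\RR}\int (f-a)^2 d\mu$, and you have filled in the standard two-sided comparison of densities precisely as intended. Your remark that the key point is to center with respect to the $\mu$-mean (rather than the $\nu$-mean) is spot on.
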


Denote $\RR_+ = [0, \infty)$. The following one-dimensional comparison
result  appears in \cite{roustant-b-i}:

\begin{proposition}\label{prop:unimodal-perturbation}
Let $b\in(0,\infty]$ and $V$ be an even continuous function on $\mathbb R$ such that  $\mu(dx)=\mathbf{1}_{(-b,b)}(x) e^{-V(x)} dx$ is a  probability measure on $\mathbb R$. Let $\rho:\mathbb R\to \mathbb R^+$ be an even function which is non-increasing on $\mathbb R^+$, such that $\nu(dx)=\rho(x)\, \mu(dx)$ is a probability measure. Then $C_P(\nu)\le C_P(\mu)$.
\end{proposition}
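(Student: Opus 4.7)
The plan is to exploit the bilateral symmetry of both $\mu$ and $\nu = \rho\mu$ (each even, since $V$ and $\rho$ are) by splitting any locally Lipschitz $f \in L^2(\nu)$ into its even and odd parts $f = f_e + f_o$. Because $\nu$ is even, the cross terms $\int f_e f_o\,d\nu$ and $\int f_e' f_o'\,d\nu$ vanish, so both sides of the Poincar\'e inequality split additively: $\mathrm{Var}_\nu(f) = \mathrm{Var}_\nu(f_e) + \mathrm{Var}_\nu(f_o)$ and $\int (f')^2\,d\nu = \int (f_e')^2\,d\nu + \int (f_o')^2\,d\nu$. It therefore suffices to prove the Poincar\'e bound with constant $C_P(\mu)$ separately on even and on odd functions.

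The key device I would use is to view $\nu$ as a mixture of symmetric truncations of $\mu$. Since $\rho$ is non-increasing in $|x|$, layer cake gives $\rho(x) = \int_0^\infty \mathbf{1}_{|x|<a(s)}\,ds$ with $a(s) := \sup\{r>0 : \rho(r) > s\}$, so $\nu = \int_0^\infty m(s)\,\mu_{a(s)}\,ds$, where $\mu_c := \mu|_{(-c,c)}/\mu((-c,c))$ and $m(s) := \mu((-a(s),a(s)))$ satisfies $\int_0^\infty m(s)\,ds = 1$. A short extension argument — given $g$ on $(-c,c)$, extend to $(-b,b)$ by the constant $g(c)$ when $g$ is even or by $g(c)\,\mathrm{sign}(x)$ when $g$ is odd (each a locally Lipschitz function with zero derivative outside $(-c,c)$) — applied inside the Poincar\'e inequality for $\mu$ yields $C_P(\mu_c) \le C_P(\mu)$ for every $c \in (0,b)$. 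Plugging this into the variance-of-mixture identity
\[ \mathrm{Var}_\nu(f) = \int_0^\infty m(s)\,\mathrm{Var}_{\mu_{a(s)}}(f)\,ds + \mathrm{Var}_{m(s)\,ds}\!\bigl(\bar f_{a(s)}\bigr), \qquad \bar f_c := \int f\,d\mu_c, \]
together with $\int (f')^2\,d\nu = \int_0^\infty m(s)\int (f')^2\,d\mu_{a(s)}\,ds$, bounds the first (within-slice) term by $C_P(\mu)\int (f')^2\,d\nu$. For odd $f$ every conditional mean $\bar f_{a(s)}$ vanishes since each $\mu_c$ is even, so the between-slice term is zero and the odd case is complete.

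The hard part is the even case, where the between-slice variance is generically nonzero. Here I would refine the extension argument by retaining its dropped boundary term, which gives $m(c)(1-m(c))(\bar f_c - f(c))^2 \le C_P(\mu)\int_{-c}^{c}(f')^2\,d\mu$, and combine this with the ODE $\bar f'(c) = \frac{2\psi(c)}{M(c)}(f(c)-\bar f(c))$ satisfied by the running average (where $\psi = e^{-V}$, $M(c) = \mu((-c,c))$) via a Hardy-type inequality with the weight $|\rho'(c)|\,dc$ on $(0,b)$, so that fluctuations of $\bar f_c$ in $c$ are controlled by $\int (f')^2\,d\nu$. That the evenness of $V$ is genuinely essential at this step is shown by the failure of the one-sided analog on $[0,b)$: taking $\mu = \mathrm{Beta}(n+1,1)$ and $\rho(x) \propto (1-x)^n$, both concentrated near opposite endpoints, produces $\nu = \mathrm{Beta}(n+1,n+1)$, for which $C_P(\nu) \gg C_P(\mu)$ when $n$ is large. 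So the challenge is exactly to leverage the symmetry to absorb the between-slice fluctuations into $C_P(\mu)\int (f')^2\,d\nu$ without any loss of constant.
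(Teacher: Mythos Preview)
The paper does not prove this proposition; it is quoted from \cite{roustant-b-i}. So let me assess your argument on its own.

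Your treatment of odd test functions is correct and clean. Writing $\nu$ as the mixture $\int_0^\infty m(s)\,\mu_{a(s)}\,ds$ via the layer-cake decomposition of $\rho$, the extension argument gives the Poincar\'e inequality for each $\mu_c$ with constant $C_P(\mu)$, and for odd $f$ every conditional mean $\bar f_{a(s)}$ vanishes, so the between-slice term disappears and $C_P(\nu,\text{``odd''})\le C_P(\mu)$ follows.

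The even case, however, is left open --- and you say so yourself in the final sentence. The obstruction is structural: the within-slice bound $\int m(s)\,\mathrm{Var}_{\mu_{a(s)}}(f)\,ds \le C_P(\mu)\int (f')^2\,d\nu$ already exhausts the full right-hand side, so for the mixture decomposition to yield the sharp constant the between-slice variance $\mathrm{Var}_{m\,ds}(\bar f_{a(\cdot)})$ must be absorbed into slack you have not identified. Your proposed remedy is to retain the boundary term $m(c)(1-m(c))(\bar f_c - f(c))^2$ from the extension argument and combine it with the ODE for $\bar f'(c)$ through an unspecified Hardy-type inequality in the weight $|\rho'(c)|\,dc$, but this is precisely the missing step, and there is no indication it holds with constant~$1$.

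A quicker completion bypasses even test functions entirely. In one dimension, for any even probability measure with continuous positive density on an interval, Sturm--Liouville oscillation theory forces the $k$-th Neumann eigenfunction to have exactly $k$ interior zeros; since even eigenfunctions have an even number of interior zeros and odd ones an odd number, the first non-constant eigenfunction must be odd. Hence $C_P(\nu)=C_P(\nu,\text{``odd''})$ --- the one-dimensional, convexity-free analogue of Corollary~\ref{cor:even-sym} in the paper --- and your odd-case bound already finishes the proof. When the spectrum is not discrete or $\rho$ is merely measurable, one recovers the general case by approximation.
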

The next statement is known as the Brascamp-Lieb variance inequality. A similar result in the complex setting appeared earlier in H\"ormander's work.
\begin{theo}[Brascamp-Lieb \cite{brascamp-Lieb}]
Let $V:\mathbb R^n\to \mathbb R$ be a $C^2$ function such that for all $x\in \mathbb R^n$, the Hessian  matrix $D^2V(x)$ is positive definite. If $\mu(dx):=e^{-V(x)} dx$ is a probability measure, then for all locally Lipschitz functions $f:\mathbb R^n\to \mathbb R$,
\[ \mathrm{Var}_\mu(f)\le \int \big\langle (D^2V)^{-1}\nabla f, \nabla f\big\rangle \, d\mu.\]
\end{theo}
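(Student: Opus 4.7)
The plan is to present the classical Hörmander–Helffer–Sjöstrand proof, based on the weighted Laplacian $L = \Delta - \nabla V \cdot \nabla$, which is the symmetric operator on $L^2(\mu)$ satisfying the integration-by-parts formula $\int g Lh \, d\mu = -\int \nabla g \cdot \nabla h \, d\mu$ for suitable test functions. After a standard density argument, it suffices to prove the inequality for a smooth, compactly supported $f$ with $\int f \, d\mu = 0$. I would solve the Poisson equation $Lu = f$; existence of a smooth solution $u$ with adequate decay is where some technical care is needed, but under the assumption that $D^2V > 0$ everywhere one can proceed by elliptic PDE theory together with a truncation–approximation scheme.

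Next I would exploit the key intertwining identity obtained by differentiating coordinatewise:
\[
\nabla (Lu) \;=\; L(\nabla u) \;-\; (D^2 V)\, \nabla u,
\]
which follows from $\partial_i(\nabla V\cdot \nabla u) = (\nabla V)\cdot \nabla\partial_i u + \sum_j (\partial_{ij}V)\partial_j u$. Using this to rewrite $\nabla f = L(\nabla u) - (D^2 V)\nabla u$, and integrating $|\nabla u|^2$ against $\mu$, the integration-by-parts formula applied componentwise yields the Bochner-type identity
\[
-\int \nabla u \cdot L(\nabla u)\, d\mu \;=\; \int \|D^2 u\|_{HS}^2\, d\mu \;\geq\; 0.
\]
Combining with $\int f^2\, d\mu = \int f \cdot Lu\, d\mu = -\int \nabla f \cdot \nabla u\, d\mu$, and pairing against $\nabla u$, I obtain
\[
\int f^2\, d\mu \;=\; \int \|D^2 u\|_{HS}^2\, d\mu \;+\; \int \langle (D^2V)\nabla u, \nabla u\rangle\, d\mu \;\geq\; \int \langle (D^2V)\nabla u,\nabla u\rangle\, d\mu .
\]

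The final step is a Cauchy–Schwarz inequality in the Riemannian metric induced by $D^2V$: from $\int f^2\, d\mu = -\int \nabla f \cdot \nabla u\, d\mu$ one estimates
\[
\int f^2\, d\mu \;\leq\; \Bigl(\int \langle (D^2V)^{-1}\nabla f,\nabla f\rangle\, d\mu\Bigr)^{1/2}\Bigl(\int \langle (D^2V)\nabla u, \nabla u\rangle\, d\mu\Bigr)^{1/2},
\]
and using the Bochner bound above to replace the second factor by $(\int f^2\, d\mu)^{1/2}$, the conclusion follows after cancellation.

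The main obstacle, and the only nontrivial analytic point, is justifying that $Lu = f$ admits a sufficiently regular solution for which the formal integrations by parts and the use of the intertwining identity are valid; this is handled by the classical approach of first replacing $V$ by $V + \varepsilon |x|^2/2$ to obtain uniform convexity at infinity, solving on balls with Neumann boundary, and passing to the limit, before removing the $\varepsilon$-regularization and extending from smooth compactly supported $f$ to arbitrary locally Lipschitz $f \in L^2(\mu)$ by density. Once this framework is in place, the rest of the argument is the algebraic identity outlined above.
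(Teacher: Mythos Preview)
The paper does not actually prove this theorem: it is stated as a classical result with a citation to Brascamp--Lieb, together with the remark that a similar inequality appeared earlier in H\"ormander's work in the complex setting. So there is no proof in the paper to compare against.

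Your outline is correct and is precisely the H\"ormander--Helffer--Sj\"ostrand argument. The algebraic core (the intertwining identity, the Bochner formula, and the Cauchy--Schwarz step in the $D^2V$-metric) is accurate, and you correctly identify the only genuine analytic issue: the existence and regularity of a solution to $Lu=f$ with enough decay to justify the integrations by parts. Your proposed regularization (add $\varepsilon|x|^2/2$, solve on balls, pass to the limit) is a standard route. It is worth noting that the paper, when proving its Proposition~\ref{prop_1004}, uses exactly the same machinery---the operator $L$, integration by parts, and the Bochner formula---and handles the analytic point not by solving $Lu=f$ directly but by invoking \cite[Lemma~3]{CFM}, which produces a sequence of smooth compactly supported $u_k$ with $Lu_k\to f$ in $L^2(\mu)$. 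That device would also streamline your argument here and avoids the need for the $\varepsilon$-perturbation and boundary-value problems.
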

In particular, if  $D^2V(x)\ge \Sigma^{-1}$ for all $x \in \RR^n$, where $\Sigma$ is a fixed positive-definite matrix, then  for all $f$, $$ \mathrm{Var}_\mu(f)\le \int \big\langle \Sigma\nabla f, \nabla f\big\rangle \, d\mu. $$ Observe that $D^2V(x)\ge \Sigma^{-1}$
means that $x\mapsto V(x)-\frac12 \langle \Sigma^{-1}x,x\rangle$ is convex. This leads, by approximation  (or via a different proof, as in  \cite{bobkov-ledoux-BLBM} where a stronger log-Sobolev inequality is proved), to the following estimate for log-concave perturbations of Gaussian measures.
\begin{corollary}\label{cor:BL}
Let $\Sigma$ be a symmetric, positive-definite $n \times n$ matrix. Let $\rho: \mathbb R^n\to \mathbb R^+$ be a log-concave function, such that
$\mu(dx):=\rho(x) \exp(-\frac 12 \langle \Sigma^{-1} x, x \rangle) dx$ is a probability measure. Then for all locally Lipschitz functions $f:\mathbb R^n\to \mathbb R$:
\[ \mathrm{Var}_\mu(f)\le \int \big\langle \Sigma\nabla f, \nabla f\big\rangle \, d\mu.\]
\end{corollary}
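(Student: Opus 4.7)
The plan is to derive the corollary from the Brascamp--Lieb variance inequality by approximation, smoothing $\rho$ while simultaneously arranging that the relevant potential has a strictly positive definite Hessian. For $\eps>0$, let $\gamma_\eps$ denote the centered Gaussian density on $\RR^n$ with covariance $\eps I$, and set $\rho_\eps:=\rho*\gamma_\eps$. By Pr\'ekopa--Leindler, $\rho_\eps$ is log-concave, and in addition it is $C^\infty$ and everywhere positive. Consider the probability measure $\mu_\eps$ whose density is proportional to $\rho_\eps(x)\exp(-\tfrac12\langle(\Sigma^{-1}+\eps I)x,x\rangle)$. Its log-density has potential $V_\eps(x)=-\log\rho_\eps(x)+\tfrac12\langle(\Sigma^{-1}+\eps I)x,x\rangle$, and since $-\log\rho_\eps$ is $C^2$ and convex we obtain the uniform Hessian lower bound $D^2V_\eps\succeq\Sigma^{-1}+\eps I\succ 0$.

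For any locally Lipschitz $f$, the Brascamp--Lieb theorem applied to $\mu_\eps$ then yields
$$ \mathrm{Var}_{\mu_\eps}(f)\le\int\bigl\langle(D^2V_\eps)^{-1}\nabla f,\nabla f\bigr\rangle\,d\mu_\eps\le\int\bigl\langle(\Sigma^{-1}+\eps I)^{-1}\nabla f,\nabla f\bigr\rangle\,d\mu_\eps, $$
where the second inequality uses the operator monotonicity of matrix inversion on the cone of positive definite matrices. Since $(\Sigma^{-1}+\eps I)^{-1}\to\Sigma$ as $\eps\to 0^+$, the proof reduces to passing to the limit on both sides of this display.

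For the limit passage, I would first observe that $\rho$ has at most Gaussian growth: indeed $\rho(x)\exp(-\tfrac12\langle\Sigma^{-1}x,x\rangle)$ is a log-concave probability density, hence bounded, so $\rho(x)\le C\exp(\tfrac12\langle\Sigma^{-1}x,x\rangle)$ for some constant $C$. This implies that for $\eps>0$ small the family of densities of $\mu_\eps$ is uniformly dominated by a fixed Gaussian density, and in particular their normalizing constants converge to the one of $\mu$. Restricting the test function first to a bounded Lipschitz $f$ with compact support, dominated convergence transfers the inequality of the previous display to $\mu$. The general case then follows from a truncation $f\mapsto(-R)\vee f\wedge R$ combined with monotone convergence on the variance side and dominated convergence on the Dirichlet-form side, letting $R\to\infty$.

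The principal obstacle lies in this final limiting step: one must produce a single integrable envelope dominating the family $(\mu_\eps)$ uniformly so that dominated convergence applies simultaneously to the three quantities at play (normalization, variance, and Dirichlet-form integral). Once this bookkeeping is handled, the estimate reduces mechanically to the Hessian comparison furnished by Brascamp--Lieb, in line with the paper's indication that the result follows ``by approximation''.
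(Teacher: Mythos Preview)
Your proposal is correct and follows exactly the approach the paper indicates: the paper states, without further detail, that the corollary follows from the Brascamp--Lieb inequality ``by approximation'' (or alternatively via the argument of Bobkov--Ledoux), and your Gaussian mollification of $\rho$ is a standard implementation of this. The extra $\eps I$ you add in the Gaussian exponent is harmless but unnecessary, since already $D^2(-\log\rho_\eps)\succeq 0$ yields $D^2V_\eps\succeq\Sigma^{-1}\succ 0$.
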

In the log-concave case, Proposition \ref{prop:bounded-perturbation} may be improved substantially, as shown by
E. Milman. A probability measure in $\RR^n$ is log-concave if it is supported in an affine subspace, and admits a log-concave density in this subspace. The total variation distance between two probability measures $\mu$ and $\nu$ is
$$ d_{TV}(\mu, \nu) = \sup_{A} |\mu(A) - \nu(A)| $$
where the supremum runs over all measurable sets $A$.

\begin{theo}[E. Milman, Section 5 in \cite{emanuel}]\label{th:total-variation}
Let $\mu_1$ and $\mu_2$ be two log-concave probability measures on $\mathbb R^n$ and let $\eps > 0$. If $d_{TV}(\mu_1,\mu_2)\le 1-\varepsilon$, then
\[ C_P(\mu_2)\le c(\varepsilon) \cdot C_P(\mu_1),\]
where $c(\varepsilon)$ depends only on $\varepsilon$.
\end{theo}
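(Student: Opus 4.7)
The plan is to reduce the theorem to a quantile-based characterization of $C_P(\mu)$ valid for log-concave measures, one that is manifestly stable under total-variation perturbations, and then to handle small $\eps$ by an interpolation argument.

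The main input would be E.~Milman's equivalence theorem \cite{emanuel}: for a log-concave probability measure $\mu$ on $\RR^n$ and any fixed $\alpha \in (0, 1/2]$,
\[ \sqrt{C_P(\mu)} \asymp_\alpha M_\alpha(\mu), \]
where $M_\alpha(\mu)$ denotes the supremum, over all $1$-Lipschitz functions $f:\RR^n\to\RR$ and all $a \le b$, of $b-a$ subject to $\mu\{f \le a\} \ge \alpha$ and $\mu\{f \ge b\} \ge \alpha$. The direction $\sqrt{C_P(\mu)} \gtrsim_\alpha M_\alpha(\mu)$ is elementary: apply the Poincar\'e inequality to the $1$-Lipschitz truncation $\min(\max(f,a),b)$ and note that its variance is at least $\alpha(b-a)^2/4$. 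The reverse direction is the deep content of Milman's theorem and is where log-concavity plays an essential role.

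Granted this equivalence, stability under the total-variation hypothesis is straightforward when $\eps > 1/2$. Indeed, $d_{TV}(\mu_1,\mu_2) \le 1-\eps$ yields $\mu_1(A) \ge \mu_2(A) - (1-\eps)$ for every measurable $A$, so a $1$-Lipschitz $f$ with $\mu_2\{f\le a\}\ge\alpha$ and $\mu_2\{f\ge b\}\ge\alpha$ also satisfies $\mu_1\{f\le a\}\ge \alpha-(1-\eps)$ and $\mu_1\{f\ge b\}\ge\alpha-(1-\eps)$. Choosing $\alpha \in (1-\eps,\,1/2]$, a nonempty range precisely when $\eps > 1/2$, one gets $M_\alpha(\mu_2) \le M_{\alpha-(1-\eps)}(\mu_1) \lesssim_\eps \sqrt{C_P(\mu_1)}$ by Milman for $\mu_1$, and the converse Milman inequality for $\mu_2$ then gives $C_P(\mu_2) \lesssim_\eps C_P(\mu_1)$.

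The main obstacle is the case $\eps \le 1/2$, where the direct quantile comparison breaks down. I would overcome this by bootstrapping: denoting the log-concave densities of $\mu_1,\mu_2$ by $f_1,f_2$, interpolate via the log-affine family $\nu_t$ with density proportional to $f_1^{1-t} f_2^t$. This family preserves log-concavity (the exponent is affine in the concave functions $\log f_i$), and its normalizing constant satisfies
\[ \int f_1^{1-t} f_2^t\, dx \ \ge\ \int \min(f_1,f_2)\, dx \ \ge\ \eps, \]
uniformly in $t \in [0,1]$. A direct computation shows that consecutive $\nu_t$'s have total-variation distance bounded by a function of $|t-s|$ and $\eps$, so one can construct a chain $\mu_1 = \nu_0, \nu_{1/N}, \dots, \nu_1 = \mu_2$ of log-concave probability measures with $N = N(\eps)$ steps and $d_{TV}(\nu_{i/N}, \nu_{(i+1)/N}) \le 1/3$. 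Iterating the $\eps > 1/2$ case along this chain yields $C_P(\mu_2) \le C^{N(\eps)}\, C_P(\mu_1) = c(\eps)\, C_P(\mu_1)$, as required.
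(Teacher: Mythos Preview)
The paper does not prove this theorem; it is quoted as a known result of E.~Milman. Your argument for the range $\eps>1/2$, reducing to the quantile functional $M_\alpha$ and invoking Milman's equivalence, is correct and is indeed in the spirit of \cite{emanuel}.

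The bootstrapping step for $\eps\le 1/2$, however, contains a genuine gap: the assertion that $d_{TV}(\nu_s,\nu_t)$ is controlled by a function of $|s-t|$ and $\eps$ alone is false. Take $\mu_1$ uniform on $[0,1]$ and $\mu_2$ uniform on $[0,M]$, so that $\eps=1/M$. For every $t\in(0,1)$ the product $f_1^{1-t}f_2^t$ is supported on $[0,1]$ and constant there, hence $\nu_t=\mu_1$, whereas $\nu_1=\mu_2$; the path $t\mapsto\nu_t$ is constant on $[0,1)$ and jumps at $t=1$, so no chain of small total-variation steps can connect the endpoints. Smoothing the densities to be everywhere positive does not rescue the argument: the transition then merely concentrates in an arbitrarily short window near $t=1$, with no modulus depending only on $\eps$ (equivalently, $\log Z_t$ is a convex function on $[0,1]$ with $\log Z_0=\log Z_1=0$ and $\log Z_t\ge\log\eps$, but such functions have no uniform modulus of continuity, and the Hellinger distance $1-Z_{(s+t)/2}/\sqrt{Z_sZ_t}$ is governed precisely by their second differences). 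In short, the geometric interpolation $f_1^{1-t}f_2^t$ preserves log-concavity but is not uniformly continuous in total variation, and your reduction to the case of large $\eps$ fails as written.
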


\subsection{Background on  the KLS conjecture}\label{sec:KLS}
In the seminal paper \cite{kls}, Kannan, Lov\'asz and Simonovits (KLS for short) formulated a conjecture on the Cheeger isoperimetric inequality for convex sets, which turned out to be of fundamental importance for the understanding of volumetric properties of high dimensional convex bodies. We refer to the books \cite{abBOOK,BOOKgreek} for an extensive presentation of the topic, and focus on the material that is needed for the present work.
The KLS conjecture has several equivalent formulations. The one that fits to our purposes is expressed in spectral terms. For a probability measure $\mu$ on $\mathbb R^n$ with finite second moments, let
$C_P(\mu, ``linear")$ denote the least number $C$ such that for every \emph{linear} function $f:\mathbb R^n\to \mathbb R$ it holds $\mathrm{Var}_\mu(f)\le C \int |\nabla f|^2 d\mu$. Plainly
\[ C_P(\mu) \le C_P(\mu, ``linear")= \|\mathrm{Cov}(\mu) \|_{op}.\]
Here, $\mathrm{Cov}(\mu) = (C_{ij})_{i,j=1,\ldots,n}$ is the covariance matrix of $\mu$, with entries
$$ C_{ij} = \int_{\RR^n} x_i x_j d \mu(x) -  \int_{\RR^n} x_i d \mu(x)
\int_{\RR^n} x_j d \mu(x),$$
and  $\|\mathrm{Cov}(\mu) \|_{op}$ is norm of $\mathrm{Cov}(\mu)$ considered as on operator on the Euclidean space $\mathbb R^n$, which is equal to the largest eigenvalue of
$\mathrm{Cov}(\mu)$.

The KLS conjecture predicts the existence of a universal constant $\kappa$ such that for every dimension $n$ and for every compact convex $K\subset \mathbb R^n$ with non-empty interior (convex body),
\[ C_P(\lambda_K)\le \kappa \, C_P(\lambda_K,``linear"), \]
where $\lambda_K$ denotes the uniform probability measure on $K$.
The conjecture has been verified for only a few families of convex bodies as the unit balls of $\ell_p^n$ \cite{sodin,latalaw}, simplices \cite{barthe-wolff}, bodies of revolution \cite{huet},
some Orlicz balls \cite{kolesnikov-milman}. The second named author proved in \cite{klartag-unconditional} that
\[C_P(\lambda_K)\le c \log(1+n)^2 C_P(\lambda_K,``linear"),\]
with $c$ being a universal constant, holds for all convex bodies $K\subset \mathbb R^n$
which are invariant by all coordinate changes of signs ($(x_1,\ldots,x_n)\in K \Longleftrightarrow
 (|x_1|,\ldots,|x_n|)$). Such bodies are called unconditional. See \cite{barthe-cordero} for more general symmetries. Corollary \ref{cor1} above gives another instance of a weak confirmation of the conjecture up to logarithms.

 The KLS conjecture can be formulated in the wider setting of log-concave probability measures
 (it turns out to be equivalent to the initial formulation on convex bodies). Let $\kappa_n$
 denote the least number such that
 \[ C_P(\mu)\le \kappa_n \, C_P(\mu,``linear") \]
 holds for all log-concave probability measures on $\mathbb R^n$. With this notation the KLS conjecture predicts that  $\sup_k \kappa_n<+\infty$. We will use known estimates on $\kappa_n$. A rather easy bound was given by Bobkov \cite{bobkov}, extending the original result of \cite{kls} for convex bodies: for all log-concave probability measures on $\mathbb R^n$,
 \begin{equation}\label{eq:trace-bound}
  C_P(\mu) \le c\, \mathrm{Tr}(\mathrm{Cov}(\mu)),
  \end{equation}
 where $c$ is a universal constant. This gives $\kappa_n\le c\, n$. The best bound so far is due to Lee and Vempala \cite{lee-vempala} after a breaktrough of Eldan \cite{eldan}: there is a universal constant $c$ such that for all log-concave probability measures on $\mathbb R^n$
  \[ C_P(\mu) \le c \|\mathrm{Cov}(\mu) \|_{HS}=c\big(\mathrm{Tr}(\mathrm{Cov}(\mu)^*\mathrm{Cov}(\mu))\big)^{1/2}.\]
  This implies that $\kappa_n\le c \sqrt n$.

\subsection{Log-concave measures with symmetries}

For a Borel measure $\mu$ on $\RR^n$ and a function $f \in L_2(\mu)$ we write
\begin{equation} \| f \|_{H^{-1}(\mu)} = \sup \left \{ \int_{\RR^n} f u d \mu \, ; \, u \in L^2(\mu) \textrm{ is locally-Lipschitz with }  \int_{\RR^n} |\nabla u|^2 d \mu \leq 1  \right \}. \label{eq_951} \end{equation}
 The norm $\| f \|_{H^{-1}(\mu)}$ makes sense only when $\int f d\mu = 0$, as otherwise
$\| f \|_{H^{-1}(\mu)} = +\infty$. By duality, it follows from the definition of the Poincar\'e constant that for any $f \in L^2(\mu)$ with $\int f d \mu = 0$,
\begin{equation} \| f \|_{H^{-1}(\mu)}^2 \leq C_P(\mu) \int_{\RR^n} f^2 d \mu.
\label{eq_915} \end{equation}
The following proposition is an extension of \cite[Lemma 1]{klartag-unconditional}, from uniform measures on $C^\infty$ smooth convex bodies to finite log-concave measures. A proof is provided for completeness.

\begin{proposition} Let $\mu$ be a finite, log-concave  measure on $\RR^n$. Let $f: \RR^n \rightarrow \RR$ be a locally-Lipschitz function in $L^2(\mu)$ with $\partial_i f \in L^2(\mu)$ and $\int \partial_i f d \mu = 0$ for all $i$. Then,
	\begin{equation} \mathrm{Var}_{\mu}(f) \leq \sum_{i=1}^n \| \partial_i f \|_{H^{-1}(\mu)}^2, \label{eq_941}
	\end{equation}
where we recall that $\mathrm{Var}_{\mu}(f) = \int (f - E)^2 d \mu$ and $E = \int f d \mu / \mu(\RR^n)$.	\label{prop_1004}
\end{proposition}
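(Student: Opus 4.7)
The plan is to solve a Poisson equation for the Laplace-type operator $L = \Delta - \nabla \psi \cdot \nabla$ associated with $d\mu = e^{-\psi}\, dx$, and to combine the resulting spectral identity with a Bochner inequality that exploits log-concavity. The crucial computation is
\[
\int (Lu)^2\, d\mu = \int \|D^2 u\|_{HS}^2\, d\mu + \int \langle D^2\psi\, \nabla u, \nabla u\rangle\, d\mu,
\]
whose second summand is non-negative precisely because $\mu$ is log-concave.

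First I would reduce to the regular case in which $\psi$ is smooth and satisfies $D^2\psi \geq \eps I$ for some $\eps > 0$; the general statement would follow by an approximation argument discussed at the end. In this regular setting $L$ has a spectral gap, so the equation $-Lu = f - E$ admits a smooth solution $u$ with $\int u\, d\mu = 0$. Integration by parts yields
\[
\mathrm{Var}_\mu(f) = \int (f - E)(-Lu)\, d\mu = \int \langle \nabla f, \nabla u\rangle\, d\mu = \sum_{i=1}^n \int \partial_i f \cdot \partial_i u\, d\mu.
\]
Since $\int \partial_i f\, d\mu = 0$, adding a constant to $\partial_i u$ does not affect the pairing, so the definition \eqref{eq_951} of the $H^{-1}$ norm gives
\[
\int \partial_i f \cdot \partial_i u\, d\mu \;\leq\; \|\partial_i f\|_{H^{-1}(\mu)} \left(\int |\nabla \partial_i u|^2\, d\mu\right)^{1/2}.
\]
Summing in $i$ via Cauchy--Schwarz, using $\sum_i |\nabla \partial_i u|^2 = \|D^2 u\|_{HS}^2$, and then discarding the non-negative Hessian-of-$\psi$ term in the Bochner identity, one obtains
\[
\mathrm{Var}_\mu(f) \leq \left(\sum_{i=1}^n \|\partial_i f\|_{H^{-1}(\mu)}^2\right)^{1/2} \left(\int \|D^2 u\|_{HS}^2\, d\mu\right)^{1/2} \leq \left(\sum_{i=1}^n \|\partial_i f\|_{H^{-1}(\mu)}^2\right)^{1/2} \sqrt{\mathrm{Var}_\mu(f)},
\]
since $\int (Lu)^2 d\mu = \int (f-E)^2 d\mu = \mathrm{Var}_\mu(f)$. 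Dividing by $\sqrt{\mathrm{Var}_\mu(f)}$ gives \eqref{eq_941}.

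The main obstacle is the approximation argument needed to reach a general finite log-concave $\mu$ from the strictly log-concave smooth case. My plan is to regularize by first convolving the density of $\mu$ with a narrow Gaussian and then multiplying by $e^{-\eps|x|^2/2}$, producing a family $\mu_\eps$ of smooth, strictly log-concave probability measures converging to $\mu$. The variance on the left-hand side of \eqref{eq_941} is continuous in $\eps$ under standard uniform-integrability considerations, while the $H^{-1}$ norms are lower semicontinuous thanks to the supremum representation \eqref{eq_951}, so the inequality proved for each $\mu_\eps$ descends to $\mu$ in the limit; carefully verifying these continuity properties is the main technical work.
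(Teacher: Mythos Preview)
Your core computation --- solve $-Lu = f - E$, integrate by parts, apply Cauchy--Schwarz, and invoke the Bochner inequality $\int (Lu)^2\, d\mu \geq \int \|D^2 u\|_{HS}^2\, d\mu$ coming from $D^2\psi \geq 0$ --- is exactly the paper's argument. The paper reduces only to ``$\psi$ smooth with everywhere-positive density'' rather than to ``$D^2\psi \geq \eps I$'', and instead of solving the Poisson equation exactly it takes compactly supported $u_k$ with $Lu_k \to f$ in $L^2(\mu)$; that difference is cosmetic.

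The genuine gap is in your approximation step. To pass $\mathrm{Var}_{\mu_\eps}(f) \leq \sum_i \|\partial_i f\|_{H^{-1}(\mu_\eps)}^2$ to the limit you need \emph{upper} semicontinuity of the right-hand side, i.e.\ $\limsup_\eps \|\partial_i f\|_{H^{-1}(\mu_\eps)} \leq \|\partial_i f\|_{H^{-1}(\mu)}$. The supremum representation \eqref{eq_951} gives the opposite: for each fixed test $u$ the ratio $\int gu\, d\mu_\eps \big/ (\int |\nabla u|^2 d\mu_\eps)^{1/2}$ converges, so a supremum over $u$ is only \emph{lower} semicontinuous in $\mu$. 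Your Gaussian convolution followed by multiplication by $e^{-\eps|x|^2/2}$ gives no monotone relation between $\mu_\eps$ and $\mu$ that would rescue this. The paper's remedy (Lemma~\ref{lem_1111}, proved in the Appendix) is to construct approximating densities with $\rho_k \geq \rho$ pointwise; then any $u$ with $\int |\nabla u|^2 d\mu_k \leq 1$ is automatically admissible for $\mu$, which is precisely what yields the needed $\limsup$ bound after a localization. A secondary point you also skip: the centering $\int \partial_i f\, d\mu_\eps = 0$ fails for the approximations, so $f$ must be corrected by a small affine function at each stage.
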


  We require the following lemma, whose proof appears in the Appendix below:

\begin{lemma} It suffices to prove Proposition \ref{prop_1004} under the  additional assumption that the measure $\mu$ has a $C^{\infty}$-smooth density in $\RR^n$ which is everywhere positive.
	\label{lem_1111}
\end{lemma}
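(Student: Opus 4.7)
The plan is a standard Gaussian-regularization argument. We approximate $\mu$ by log-concave measures $\mu_\epsilon$ with $C^\infty$-smooth, everywhere positive densities, apply Proposition \ref{prop_1004} in the regularized setting (allowed by hypothesis), and pass to the limit as $\epsilon \to 0^+$. Let $\mu_\epsilon = \mu * \gamma_\epsilon$, where $\gamma_\epsilon$ is the centered Gaussian on $\RR^n$ with covariance $\epsilon I$. By the Pr\'ekopa-Leindler inequality, $\mu_\epsilon$ is log-concave, and its density $\rho_\epsilon = \rho * \gamma_\epsilon$ (where $\rho$ is the density of $\mu$) is $C^\infty$-smooth, strictly positive on $\RR^n$, has the same total mass as $\mu$, and satisfies $\rho_\epsilon \to \rho$ in $L^1(\RR^n)$.

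To enforce the vanishing-mean hypothesis against the perturbed measure, set $a_i(\epsilon) = \mu_\epsilon(\RR^n)^{-1} \int \partial_i f \, d\mu_\epsilon$ and $f_\epsilon(x) = f(x) - \sum_i a_i(\epsilon) x_i$. Then $\int \partial_j f_\epsilon \, d\mu_\epsilon = 0$ for every $j$, and $a_i(\epsilon) \to 0$ by the $L^1$-convergence of densities. Applying the assumed smooth case of Proposition \ref{prop_1004} to $(\mu_\epsilon, f_\epsilon)$ gives
\[ \mathrm{Var}_{\mu_\epsilon}(f_\epsilon) \le \sum_{i=1}^n \|\partial_i f_\epsilon\|_{H^{-1}(\mu_\epsilon)}^2. \]
The left-hand side converges to $\mathrm{Var}_\mu(f)$ by $L^1$-convergence of densities, $f \in L^2(\mu)$, and a routine truncation argument exploiting the exponential decay of log-concave densities together with uniform moment bounds on $\mu_\epsilon$.

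The main obstacle is the upper-semicontinuity of the right-hand side:
\[ \limsup_{\epsilon \to 0^+} \|\partial_i f_\epsilon\|_{H^{-1}(\mu_\epsilon)}^2 \le \|\partial_i f\|_{H^{-1}(\mu)}^2 \qquad (i = 1, \ldots, n), \]
which is delicate because $\|\cdot\|_{H^{-1}}$ is defined as a supremum and is naturally only lower-semicontinuous under perturbations of the measure. I would handle it by restricting the supremum in the definition of $\|\cdot\|_{H^{-1}(\mu_\epsilon)}$ to smooth, compactly supported test functions $u$; these are dense in the admissible class in both $H^1(\mu)$ and $H^1(\mu_\epsilon)$ for log-concave measures (via mollification and cutoff, using that $\rho$ and $\rho_\epsilon$ are locally bounded). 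For any fixed such $u$, both $\int (\partial_i f_\epsilon) \, u \, d\mu_\epsilon$ and $\int |\nabla u|^2 \, d\mu_\epsilon$ depend continuously on $\epsilon$ via the $L^1_{\mathrm{loc}}$ convergence $\rho_\epsilon \to \rho$ (apply dominated convergence, using that $u$ and $|\nabla u|^2$ are bounded on the compact support of $u$). Combined with the $L^2(\mu)$-convergence $\partial_i f_\epsilon \to \partial_i f$, this yields the required upper bound and hence $\mathrm{Var}_\mu(f) \le \sum_i \|\partial_i f\|_{H^{-1}(\mu)}^2$, giving Proposition \ref{prop_1004} for the general log-concave $\mu$.
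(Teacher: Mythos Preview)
Your argument has a genuine gap at the upper-semicontinuity step for the $H^{-1}$-norm, and the remedy you propose does not close it. Knowing that for each fixed $u\in C_c^\infty(\RR^n)$ the quantities $\int(\partial_i f_\epsilon)\,u\,d\mu_\epsilon$ and $\int|\nabla u|^2\,d\mu_\epsilon$ converge as $\epsilon\to 0$ only tells you that the family of functionals $u\mapsto \big(\int(\partial_i f_\epsilon)\,u\,d\mu_\epsilon\big)\big/\big(\int|\nabla u|^2\,d\mu_\epsilon\big)^{1/2}$ converges \emph{pointwise} in $u$; a supremum of such a family is then lower-semicontinuous in $\epsilon$, which is exactly the wrong direction. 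Concretely, the near-optimizer $u_\epsilon$ for $\|\partial_i f_\epsilon\|_{H^{-1}(\mu_\epsilon)}$ may drift with $\epsilon$, and nothing in your argument controls $\int|\nabla u_\epsilon|^2\,d\mu$ in terms of $\int|\nabla u_\epsilon|^2\,d\mu_\epsilon$. Gaussian convolution gives no pointwise comparison between $\rho_\epsilon$ and $\rho$ (indeed $\rho_\epsilon<\rho$ at maxima of $\rho$), so you cannot transfer admissibility of $u_\epsilon$ from $\mu_\epsilon$ to $\mu$. A secondary issue is that you apply the smooth case to $f_\epsilon$ without first reducing $f$ to a smooth function with bounded derivatives; as stated, $\partial_i f$ is only in $L^2(\mu)$, and there is no reason $a_i(\epsilon)=\mu_\epsilon(\RR^n)^{-1}\int\partial_i f\,d\mu_\epsilon$ is even finite.

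The paper circumvents both problems. It first uses density of $C_c^\infty(\RR^n)$ in $H^1(\mu)$ and continuity of both sides of (\ref{eq_941}) in the $H^1(\mu)$-topology to reduce to $f$ smooth with bounded derivatives. Then, crucially, it builds approximating log-concave densities $\rho_k$ via infimum-convolution of $-\log\rho$ with $k|\cdot|$ followed by mollification, arranged so that $\rho_k\ge\rho$ pointwise and $\rho_k\to\rho$ locally uniformly on the interior of the support. The pointwise domination $\rho_k\ge\rho$ is what makes the $H^{-1}$ step work: any near-optimizer $u_k$ with $\int|\nabla u_k|^2\,d\mu_k\le 1$ automatically satisfies $\int|\nabla u_k|^2\,d\mu\le 1$, so it is admissible for $\mu$ and one can compare $\int h\,u_k\,d\mu_k$ with $\int h\,u_k\,d\mu$ after truncating $h=\partial_i f$ to a compact set where the convergence is uniform. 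Your Gaussian regularization lacks this monotonicity, and without it the limsup inequality does not follow.
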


\begin{proof}[Proof of Proposition \ref{prop_1004}] Thanks to Lemma \ref{lem_1111}, we may assume that
$\mu(dx) = \exp(-\psi(x)) dx$, where $\psi: \RR^n \rightarrow \RR$ is smooth and convex. We may also add a constant to $f$ and assume that $\int f d \mu = 0$. Define the associated Laplace operator
	$$ L u = \Delta u - \langle \nabla u, \nabla \psi \rangle = \sum_{i=1}^n \partial_{ii} u - \partial_i u \cdot \partial_i \psi $$
	for a $C^2$-smooth, compactly-supported $u: \RR^n \rightarrow \RR$. A virtue of this operator is the integration by parts
	$$ \int_{\RR^n} u (Lv) d \mu = -\int_{\RR^n} \langle \nabla u, \nabla v \rangle d \mu, $$
	valid whenever $v: \RR^n \rightarrow \RR$ is $C^2$-smooth and compactly-supported and $u$ is locally-Lipschitz. The Bochner formula states that for
	any $C^2$-smooth, compactly-supported
	function $u: \RR^n \rightarrow \RR$,
	$$ \int_{\RR^n} (Lu)^2 d \mu =
	\sum_{i=1}^n \int_{\RR^n} |\nabla \partial_i u|^2 d \mu + \int_{\RR^n} (\nabla^2 \psi) \nabla u \cdot \nabla u \, d \mu \geq \sum_{i=1}^n \int_{\RR^n} |\nabla \partial_i u|^2 d \mu. $$	
	This Bochner formula is discussed in \cite{CFM}, where it is also proven (see \cite[Lemma 3]{CFM}) that there exists a sequence of compactly-supported, $C^2$-smooth functions $u_k: \RR^n \rightarrow \RR \ (k=1,2,\ldots)$ with
\begin{equation} \lim_{k \rightarrow \infty} L u_k = f \qquad \text{in} \ L^2(\mu). \label{eq_401} \end{equation}
Now, for any $k \geq 1$,
\begin{align} \nonumber
\int_{\RR^n} f (L u_k) d \mu
& = -\sum_{i=1}^n \int_{\RR^n} \partial_i f \cdot \partial_i u_k d \mu \leq \sqrt{ \sum_{i=1}^n \int_{\RR^n} |\nabla \partial_i u_k|^2 d \mu} \cdot \sqrt{ \sum_{i=1}^n \| \partial_i f \|_{H^{-1}(\mu)}^2} \\ & \leq \| L u_k \|_{L^2(\mu)} \cdot \sqrt{ \sum_{i=1}^n \| \partial_i f \|_{H^{-1}(\mu)}^2}. \label{eq_400}
\end{align}
	By letting $k$ tend to infinity we deduce (\ref{eq_941}) from (\ref{eq_401}) and (\ref{eq_400}).
\end{proof}

Let us write $C_P(\mu, ``even")$ for the smallest number $C > 0$ for which
$$ \mathrm{Var}_{\mu}(f) \leq C \int_{\RR^n} |\nabla f|^2 d \mu $$
for all even, locally-Lipschitz functions $f \in L^2(\mu)$.
We write $C_P(\mu, ``odd")$ for the analogous quantity where $u$ is assumed an odd function.

\medskip When $\mu$ is an even measure and $f \in L^2(\mu)$ is odd,
we may restrict attention to odd functions $u$
in the definition (\ref{eq_951}) of $\| f \|_{H^{-1}(\mu)}$. Indeed, replacing $u(x)$ by its odd part $[u(x) - u(-x)] / 2$ cannot possibly increase $\int |\nabla u|^2 d \mu$ or affect the integral
$\int f u d \mu$ at all. Consequently, in this case,
\begin{equation}
\| f \|_{H^{-1}(\mu)}^2 \leq C_P(\mu, ``odd") \cdot \int_{\RR^n} f^2 d \mu. \label{eq_429}
\end{equation}
Moreover, when $\mu$ is an even measure in $\RR^n$ we have
\begin{equation} C_P(\mu) = \max \{ C_P(\mu, ``odd"), C_P(\mu, ``even") \}. \label{eq_1002} \end{equation}
This follows from the fact that any locally-Lipschitz $f \in L^2(\mu)$ may be decomposed as $f = g + h$ with $g$ even and $h$ odd, and $\int gh d \mu = \int (\nabla g \cdot \nabla h)  d \mu = 0$.
In the case where the even measure $\mu$ is additionally assumed log-concave, formula (\ref{eq_1002}) may be improved. The following corollary is an extension of \cite[Corollary 2(ii)]{klartag-unconditional} from  smooth convex bodies to finite log-concave measures. This extension  requires a modified argument, as the one in \cite{klartag-unconditional} was based on eigenfunctions, which may not exist in general.
\begin{corollary} \label{cor:even-sym} Let $\mu$ be a finite, log-concave  measure on $\RR^n$. Assume that $\mu$ is even. Then $$ C_P(\mu) = C_P(\mu, ``odd"). $$ \label{thm_1003}
\end{corollary}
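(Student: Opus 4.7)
The plan is to combine equation (\ref{eq_1002}) with Proposition \ref{prop_1004} and the improved $H^{-1}$ bound (\ref{eq_429}). By (\ref{eq_1002}) it suffices to prove $C_P(\mu,``even") \le C_P(\mu,``odd")$, so let $f$ be an even, locally-Lipschitz function in $L^2(\mu)$. We may assume $\int |\nabla f|^2 d\mu < \infty$, since otherwise the desired inequality is trivial; in particular each $\partial_i f$ lies in $L^2(\mu)$.

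The key observation is that the partial derivatives of an even function are odd, so when $\mu$ itself is even we automatically have $\int \partial_i f \, d\mu = 0$ for every $i$. This means the hypotheses of Proposition \ref{prop_1004} are satisfied, giving
\[ \mathrm{Var}_\mu(f) \le \sum_{i=1}^n \|\partial_i f\|_{H^{-1}(\mu)}^2. \]
Now since $\partial_i f$ is odd and $\mu$ is even, we may invoke the improved bound (\ref{eq_429}) instead of the general bound (\ref{eq_915}), obtaining
\[ \|\partial_i f\|_{H^{-1}(\mu)}^2 \le C_P(\mu,``odd") \cdot \int_{\RR^n} (\partial_i f)^2 \, d\mu \]
for each $i$. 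Summing over $i=1,\ldots,n$ yields
\[ \mathrm{Var}_\mu(f) \le C_P(\mu,``odd") \cdot \int_{\RR^n} |\nabla f|^2 \, d\mu, \]
which shows $C_P(\mu,``even") \le C_P(\mu,``odd")$, and the corollary follows from (\ref{eq_1002}).

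There is no substantial obstacle here beyond verifying that the pieces fit: the ``trick'' is simply that differentiating an even function produces an odd function, so one can iterate the symmetry and trade the generic Poincar\'e constant in the $H^{-1}$ estimate for the odd one. The ingredient that does real work is Proposition \ref{prop_1004}, whose proof via the Bochner formula and the solvability of $Lu = f$ is the source of any genuine difficulty; the present corollary only wires things together. No appeal to an eigenfunction is made, which is essential because $E_\mu$ may be empty for a general finite log-concave $\mu$.
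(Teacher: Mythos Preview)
Your proof is correct and follows essentially the same route as the paper's: reduce via (\ref{eq_1002}) to showing $C_P(\mu,``even")\le C_P(\mu,``odd")$, observe that the partial derivatives of an even $f$ are odd (hence have zero $\mu$-mean), apply Proposition~\ref{prop_1004}, and then use (\ref{eq_429}) in place of (\ref{eq_915}). The only cosmetic difference is that the paper phrases the trivial case as ``$\partial_i f\notin L^2(\mu)$ for some $i$'' rather than ``$\int|\nabla f|^2\,d\mu=\infty$'', which amounts to the same thing.
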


\begin{proof} In view of (\ref{eq_1002}), we need to prove that $C_P(\mu, ``even") \leq C_P(\mu, ``odd")$. Thus, let $f \in L^2(\mu)$ be an even, locally-Lipschitz function. Then $\partial_i f$ is an odd function for all $i$. In the case where $\partial_i f \in L^2(\mu)$ for all $i$,
	by Proposition \ref{prop_1004} and by (\ref{eq_429}),
\begin{equation}
\mathrm{Var}_{\mu}(f) \leq \sum_{i=1}^n \| \partial_i f \|_{H^{-1}(\mu)}^2 \leq
C_P(\mu, ``odd") \cdot \sum_{i=1}^n
\int_{\RR^n} |\partial^i f|^2 d \mu =
C_P(\mu, ``odd") \cdot \int_{\RR^n} |\nabla f|^2. \label{eq_432}
\end{equation}
Note that (\ref{eq_432}) trivially holds when $\partial_i f \not \in L^2(\mu)$ for some $i$, as the right-hand side is infinite. Now (\ref{eq_432}) shows
that $C_P(\mu, ``even") \leq C_P(\mu, ``odd")$.
\end{proof}

\begin{proof}[Proof of Theorem \ref{th:interlace}] The first part of the theorem follows
from Corollary \ref{thm_1003}. As for the second part, let $E \subseteq L^2(\mu)$
be an $(n+1)$-dimensional subspace of locally-Lipschitz, odd functions. Consider the linear map $\theta: E \rightarrow \RR^n$ defined via
\begin{equation}
\theta(f) := \int_{\RR^n} \nabla f\,  d \mu.
\label{eq_412}
\end{equation}
Since $E$ is $(n+1)$-dimensional, there exists $0 \not \equiv f \in E$ with $\theta(f) = 0$. Since $f$ is odd,  the function $\partial_i f$ is an even function for all $i$. In the case where $\partial_i f \in L^2(\mu)$ for all $i$,
by Proposition \ref{prop_1004} and (\ref{eq_429}),
$$
\mathrm{Var}_{\mu}(f) \leq \sum_{i=1}^n \| \partial_i f \|_{H^{-1}(\mu)}^2 \leq
C_P(\mu, ``even") \cdot \sum_{i=1}^n
\int_{\RR^n} |\partial^i f|^2 d \mu =
C_P(\mu, ``even") \cdot \int_{\RR^n} |\nabla f|^2 d\mu. $$
This inequality trivially holds if $\partial_i f \not \in L^2(\mu)$ for some $i$. We have thus found $f \in E$ with
$$ \lambda_P(\mu, ``even") = \frac{1}{C_P(\mu, ``even")} \leq \frac{\int_{\RR^n} |\nabla f|^2 d \mu}{\mathrm{Var}_{\mu}(f)}, $$
completing the proof of the theorem.
\end{proof}

%

A measure $\mu$ in $\RR^n$ is unconditional if it is invariant under coordinate reflections, i.e., for any test function $\vphi$ and any  choice of signs,
$$ \int_{\RR^n} \vphi(\pm x_1,\ldots,\pm x_n) d \mu(x) = \int_{\RR^n} \vphi(x_1,\ldots,x_n) d \mu(x). $$
The following corollary is similar to
\cite[Corollay 2(i)]{klartag-unconditional} but it does not involve any regularity assumption:

\begin{corollary}\label{cor:uncond-sym} Let $\mu$ be a finite, log-concave measure on $\RR^n$. Assume that $\mu$ is unconditional. Then
	$$ C_P(\mu) = C_P(\mu, ``\textrm{odd in at least one coordinate}"), $$
	i.e., in the definition of $C_P(\mu)$ it suffices to consider functions $f(x_1,\ldots,x_n)$ for which there is an index $i$ such that $f$ is odd with respect to $x_i$.
\end{corollary}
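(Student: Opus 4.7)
The plan is to mimic the proof of Corollary \ref{cor:even-sym}, but exploit the larger symmetry group $\{-1,+1\}^n$ of coordinate reflections that leaves $\mu$ invariant by unconditionality. Given a locally-Lipschitz $f \in L^2(\mu)$ with $\int |\nabla f|^2 d\mu < \infty$, I would decompose $f = \sum_{S \subseteq \{1,\ldots,n\}} f_S$, where $f_S$ is the projection of $f$ onto the character of $\{-1,+1\}^n$ indexed by $S$; concretely, $f_S$ is odd in coordinate $i$ precisely when $i \in S$. Unconditionality of $\mu$ makes this decomposition orthogonal in $L^2(\mu)$, and a short coordinate-by-coordinate parity check gives $\int \partial_i f_S \cdot \partial_i f_{S'} d\mu = 0$ for $S \neq S'$, so it is orthogonal for the Dirichlet form as well. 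Since $\int f_S d\mu = 0$ whenever $S \neq \emptyset$,
$$ \mathrm{Var}_\mu(f) = \mathrm{Var}_\mu(f_\emptyset) + \sum_{S \neq \emptyset} \int f_S^2 d\mu, \qquad \int |\nabla f|^2 d\mu = \sum_S \int |\nabla f_S|^2 d\mu. $$

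For every non-empty $S$, the function $f_S$ is odd in at least one coordinate and mean-zero, hence by the very definition of $C := C_P(\mu,``\textrm{odd in at least one coordinate}")$ one immediately has $\int f_S^2 d\mu \leq C \int |\nabla f_S|^2 d\mu$. The only non-trivial case is the fully even component $f_\emptyset$, and this is where the argument of Corollary \ref{cor:even-sym} must be refined. I would apply Proposition \ref{prop_1004} to $f_\emptyset$; the hypothesis $\int \partial_i f_\emptyset \, d\mu = 0$ is automatic because $\partial_i f_\emptyset$ is odd in coordinate $i$ and $\mu$ is unconditional.

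The key auxiliary fact, replacing (\ref{eq_429}), is that for every $g \in L^2(\mu)$ that is odd in some coordinate $i$,
$$ \|g\|_{H^{-1}(\mu)}^2 \leq C \int g^2 d\mu. $$
To prove it, I would restrict the supremum in (\ref{eq_951}) to test functions $u$ that are odd in coordinate $i$: replacing an arbitrary $u$ by $\tilde u(x) = (u(x) - u(R_i x))/2$, where $R_i$ flips the $i$-th coordinate, leaves $\int g u \, d\mu$ unchanged (the discarded symmetric-in-$i$ piece pairs with the odd-in-$i$ function $g$ to produce an odd-in-$i$ integrand, which vanishes by unconditionality), and cannot increase $\int |\nabla u|^2 d\mu$. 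Such a $\tilde u$ is automatically mean-zero and odd in at least one coordinate, so Cauchy--Schwarz together with $\int \tilde u^2 d\mu \leq C \int |\nabla \tilde u|^2 d\mu$ closes the argument. Combining this lemma with Proposition \ref{prop_1004} applied to $f_\emptyset$ (each $\partial_i f_\emptyset$ being odd in coordinate $i$) yields $\mathrm{Var}_\mu(f_\emptyset) \leq C \int |\nabla f_\emptyset|^2 d\mu$, and summing over $S$ finishes the proof. The technicality that $\partial_i f_S$ might fail to lie in $L^2(\mu)$ for some pair $(i,S)$ is handled exactly as in (\ref{eq_432}): it forces $\int |\nabla f|^2 d\mu = +\infty$, making the Poincar\'e inequality trivial. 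The main conceptual obstacle is the refinement of the $H^{-1}$-bound (\ref{eq_429}), while the rest is bookkeeping on parity classes.
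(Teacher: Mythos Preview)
Your proposal is correct and follows essentially the same approach as the paper: decompose $f$ into its $2^n$ parity components, use orthogonality to reduce to each type, and handle the fully even piece via Proposition~\ref{prop_1004} combined with an $H^{-1}$ bound for functions odd in a single coordinate. The paper's proof is terser---it records the decomposition as (\ref{eq_452}) and then simply says ``rerun the argument in (\ref{eq_432})''---but your explicit justification of the refined $H^{-1}$ bound (via the single-coordinate reflection $R_i$ in place of $x\mapsto -x$) is precisely what that rerun amounts to.
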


\begin{proof} For $I \subseteq \Omega_n = \{ 1,\ldots, n \}$ we say that $f(x_1,\ldots,x_n)$ is of type $I$ if it is even with respect to $x_i$ for $i \in I$ and odd with respect to $x_i$ for $i \not \in I$. Any $f \in L^2(\mu)$ may be decomposed into a sum of $2^n$ functions, each of a certain type $I \subseteq \Omega_n$. Moreover, even without the log-concavity assumption we have
	\begin{equation} C_P(\mu) = \max_{I \subseteq \Omega_n} C_P(\mu, ``\textrm{functions of type I}").  \label{eq_452}
	\end{equation}
	All we need is to
	eliminate the case $I = \Omega_n$ from the maximum in (\ref{eq_452}). However, if $f$ is of type $\Omega_n$, then each function $\partial_i f$ is of  type $\Omega_n\setminus \{i\} $. We may thus rerun the argument in (\ref{eq_432}) and complete the proof.
\end{proof}

\subsection{The structure of the eigenspace}

We move on to discuss properties of eigenfunctions of log-concave measures with symmetries, following their investigation  in \cite{klartag-unconditional}.
We will consider a log-concave probability measure $d\mu(x)=e^{-\psi(x)} dx$ such that $\psi:\RR^n\to \RR$ is of class $C^2$ and $D^2\psi(x)>0$ for all $x$. The Poincar\'e inequality asserts that the non-zero eigenvalues of
$-L$, where $$ L=\Delta-\langle \nabla\psi, \nabla \rangle, $$ are at least $1/C_P(\mu)$. We assume here that $\lambda_\mu=\lambda_P(\mu) = 1/C_P(\mu)$ is actually an eigenvalue for $L$ and study the structure of the corresponding eigenspace $E_\mu:=\{ f\in L^2(\mu);  Lf=-\lambda_\mu f\}$.
Note that elliptic regularity ensures that eigenfunctions are $C^2$-smooth.
First, we put forward the key ingredient in \cite{klartag-unconditional}.
We reproduce the proof, for completeness.
\begin{lemma}\label{lem:injectivity}
Under the above assumptions, the linear map  $\theta:E_\mu\to \RR^n$
defined in (\ref{eq_412})
is injective. As a consequence $\dim E_\mu\le n$.
\end{lemma}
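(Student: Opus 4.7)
The plan is to show that any $f \in E_\mu$ with $\theta(f) = 0$ must vanish identically; injectivity of $\theta$ and the bound $\dim E_\mu \leq n$ follow at once. The guiding idea is that the hypothesis $\int \partial_i f\, d\mu = 0$ brings each partial derivative $\partial_i f$ into the reach of the Poincar\'e inequality, while at the same time $\partial_i f$ is, up to a zeroth-order Hessian term, itself an eigenfunction of $L$ with the same eigenvalue. Comparing the two estimates will force the strict log-concavity term to vanish.

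Concretely, elliptic regularity makes $f$ smooth, and integration by parts against $Lf = -\lambda_\mu f$ gives $\int |\nabla f|^2\, d\mu = \lambda_\mu \int f^2\, d\mu$; in particular each $\partial_i f$ lies in $L^2(\mu)$. A direct computation yields the commutator relation
$$ L(\partial_i f) \;=\; \partial_i(Lf) + \sum_{j=1}^n \psi_{ij} \partial_j f \;=\; -\lambda_\mu\, \partial_i f + \sum_{j=1}^n \psi_{ij}\partial_j f, $$
where $\psi_{ij} = \partial_i\partial_j \psi$. Multiplying by $\partial_i f$, integrating against $\mu$, using $\int u\, Lu\, d\mu = -\int |\nabla u|^2\, d\mu$, and summing over $i$ yields the Bochner-type identity
$$ \sum_{i=1}^n \int |\nabla \partial_i f|^2\, d\mu \;=\; \lambda_\mu \int |\nabla f|^2\, d\mu \;-\; \int \langle (D^2\psi)\nabla f, \nabla f\rangle\, d\mu. \qquad (\star) $$
Equivalently, $(\star)$ follows from applying the Bochner identity recalled in the proof of Proposition \ref{prop_1004} to the eigenfunction $f$, together with $\int (Lf)^2\, d\mu = \lambda_\mu^2 \int f^2\, d\mu$.

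The hypothesis $\theta(f) = 0$ now says $\int \partial_i f\, d\mu = 0$ for every $i$, so the Poincar\'e inequality for $\mu$ applies to each $\partial_i f$ and gives $\int (\partial_i f)^2\, d\mu \leq \lambda_\mu^{-1} \int |\nabla \partial_i f|^2\, d\mu$. Summing over $i$ and substituting into $(\star)$ yields
$$ \int |\nabla f|^2\, d\mu \;\leq\; \int |\nabla f|^2\, d\mu \;-\; \frac{1}{\lambda_\mu}\int \langle (D^2\psi)\nabla f, \nabla f\rangle\, d\mu, $$
which forces $\int \langle (D^2\psi)\nabla f, \nabla f\rangle\, d\mu \leq 0$. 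Since $D^2\psi$ is pointwise positive definite by hypothesis, $\nabla f \equiv 0$, so $f$ is constant; as any eigenfunction of $L$ for the nonzero eigenvalue $\lambda_\mu$ has vanishing $\mu$-mean ($L$ annihilates constants), this gives $f \equiv 0$.

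The main technical obstacle is justifying the integration-by-parts manipulations on the non-compactly-supported smooth functions $f$ and $\partial_i f$. This is a standard issue for non-compact spectral theory, and can be handled either by invoking the approximation scheme from \cite{CFM} already used in the proof of Proposition \ref{prop_1004} (approximating $f$ by compactly supported $u_k$ with $L u_k \to f$ in $L^2(\mu)$), or by truncating $f$ and $\nabla f$ against smooth cutoffs and exploiting the exponential decay supplied by log-concavity of $\mu$ to pass to the limit.
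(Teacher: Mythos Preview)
Your argument is correct and follows essentially the same route as the paper: both apply the Poincar\'e inequality to each $\partial_i f$ (legitimate since $\theta(f)=0$), combine this with the Bochner identity relating $\sum_i\int|\nabla\partial_i f|^2\,d\mu$, $\int(Lf)^2\,d\mu$ and the Hessian term, and conclude from strict positivity of $D^2\psi$ that $\nabla f\equiv 0$. The only cosmetic difference is that the paper packages the computation as a closed chain of (in)equalities beginning and ending with $\lambda_\mu\int f^2\,d\mu$, whereas you derive the Bochner-type identity $(\star)$ via the commutator $[L,\partial_i]$ and then substitute; the content is identical, and your explicit acknowledgment of the integration-by-parts justification is, if anything, more careful than the paper's terse treatment.
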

\begin{proof}
Assume $Lf=-\lambda_\mu f$ and $\int \nabla f d\mu=0$.  Then using integration by parts, the Poincar\'e inequality for the zero average functions $\partial_i f$ and the Bochner formula gives
\begin{align*}
\lambda_\mu \int f^2d\mu&= -\int f Lf \, d\mu = \int |\nabla f|^2 d\mu =\sum_i \mathrm{Var}_\mu(\partial_i f) \le \frac{1}{\lambda_\mu}
\sum_i \int |\nabla \partial_i f|^2 d\mu\\
=& \frac{1}{\lambda_\mu}\left( \int (Lf)^2d\mu - \int \langle D^2\psi \nabla f,\nabla f\rangle d\mu \right)\le \frac{1}{\lambda_\mu} \int (Lf)^2d\mu =\lambda_\mu \int f^2 d\mu.
\end{align*}
Hence all the above inequalities are actually equalities. In particular
$ \int \langle D^2\psi \nabla f,\nabla f\rangle d\mu =0$, from which we conclude that $f$ is constant. Hence $0=Lf=-\lambda_\mu f$, and $f=0$.
\end{proof}
Let $\mathcal O_n$ be the group of linear isometries of the Euclidean space $\RR^n$. We consider the subgroup of isometries  which leave
$\mu$ invariant:
\[\mathcal O_n(\mu):=\big\{R\in \mathcal O_n;\; R\mu=\mu\big\}= \big\{R\in \mathcal O_n;\;  \psi\circ R=\psi\big\}.\]
\begin{lemma}\label{lem:Omu}
If $f\in E_\mu$ and $R\in \mathcal{O}_n(\mu)$ then $f\circ R^{-1}\in E_\mu$ and
\[ \theta\big( f\circ R^{-1}\big)= R \theta(f).\]
\end{lemma}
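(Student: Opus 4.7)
The plan is to check directly that the operator $L$ and the vector $\theta(f)$ both transform naturally under the isometry action $f\mapsto f\circ R^{-1}$. There is no serious obstacle here; the argument is a change-of-variables and chain-rule computation, and the only point requiring care is to track carefully the transposes associated with $R$.

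First I would verify that $f\circ R^{-1}\in E_\mu$. Since $R_*\mu=\mu$ and $R$ is an isometry, $\int(f\circ R^{-1})^2 d\mu=\int f^2 d\mu<\infty$, so $f\circ R^{-1}\in L^2(\mu)$. Because $R$ is orthogonal, the chain rule yields
\[ \nabla(f\circ R^{-1})(x)=R\,\nabla f(R^{-1}x), \qquad \Delta(f\circ R^{-1})(x)=(\Delta f)(R^{-1}x). \]
From the invariance $\psi\circ R=\psi$ I get $\nabla\psi(R^{-1}x)=R^T\nabla\psi(x)$, hence
\[ \langle\nabla\psi(x),\nabla(f\circ R^{-1})(x)\rangle=\langle\nabla\psi(x),R\nabla f(R^{-1}x)\rangle=\langle\nabla\psi(R^{-1}x),\nabla f(R^{-1}x)\rangle. \]
Combining these identities gives $L(f\circ R^{-1})(x)=(Lf)(R^{-1}x)=-\lambda_\mu (f\circ R^{-1})(x)$, so $f\circ R^{-1}$ lies in $E_\mu$.

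Second, I would compute $\theta(f\circ R^{-1})$ by the definition (\ref{eq_412}): using the gradient formula above and then the change of variables $y=R^{-1}x$, together with $R_*\mu=\mu$,
\[ \theta(f\circ R^{-1})=\int_{\RR^n} R\,\nabla f(R^{-1}x)\,d\mu(x)=R\int_{\RR^n}\nabla f(y)\,d\mu(y)=R\,\theta(f). \]
This is exactly the second claim.

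Alternatively, one could bypass the pointwise manipulation by noting that both the Dirichlet form $\mathcal E(f,g)=\int\langle\nabla f,\nabla g\rangle\,d\mu$ and the inner product on $L^2(\mu)$ are invariant under $f\mapsto f\circ R^{-1}$ for $R\in\mathcal O_n(\mu)$, which forces $L$ to commute with this action and hence to preserve each eigenspace; the computation of $\theta$ is then identical. I would favor the direct chain-rule approach above since it requires no extra setup and fits the level of the rest of the section.
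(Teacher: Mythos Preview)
Your proof is correct and follows essentially the same approach as the paper: both arguments use the chain rule identity $\nabla(f\circ R^{-1})=R(\nabla f)\circ R^{-1}$ together with the invariance of $\mu$ under $R$. The paper is simply more terse, declaring the eigenfunction claim ``readily checked'' and writing only the one-line computation of $\theta(f\circ R^{-1})$, whereas you spell out the verification that $L$ commutes with composition by $R^{-1}$ via the drift term identity $\nabla\psi(R^{-1}x)=R^T\nabla\psi(x)$; this extra detail is sound and matches what the paper leaves implicit.
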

\begin{proof}
The fact that $f\circ R^{-1}$ is still an eigenfunction is readily checked. Next
\[ \theta\big( f\circ R^{-1}\big)=\int \nabla (f\circ R^{-1})d\mu
= \int R (\nabla f)\circ R^{-1} d\mu= R \int \nabla f \, d\mu,\]
where we have used that $R^{-1}$ is also the adjoint of $R$, and the invariance of $\mu$.
\end{proof}
\begin{rem}
This result can be formulated in a more abstract way. The group $\mathcal{O}_n(\mu)$ has a natural representation as  operators on  $\RR^n$, denoted by $\rho$. It has another one as operators on $E_\mu$,
denoted by $\pi$ and defined for $R\in \mathcal O_n(\mu)$ and $f\in E_\mu$ by $\pi(R)f=f\circ R^{-1}$.  The statement of the lemma means that
$\theta:E_\mu\to \RR^n$ intertwines $\pi$ and $\rho$.
\end{rem}

\begin{rem}
The arguments of the above two proofs  were used in \cite{klartag-unconditional} to establish the existence of antisymmetric eigenfunctions, more specifically of an odd eigenfunction when $\psi$ is even, and of an eigenfunction which is odd in one coordinate when $\psi$ is unconditional. Note that these results give Corollary~\ref{cor:even-sym} and also Corollary \ref{cor:uncond-sym} below under strong assumptions on the existence of eigenfunctions, which we could remove in the present paper. It was proven in \cite{barthe-cordero} that the existence of antisymmetric eigenfunctions extends as follows: if
there exist $R_1,\ldots, R_k\in \mathcal O_n(\mu)$ such that $\{x\in \RR^n;\; \forall i, R_ix=x\}=\{0\}$ then for every $f\in E_\mu\setminus \{0\}$ there exists $i$ such that $f\circ R_i -f\in E_\mu\setminus \{0\}$.
The proof of this is easy  from the lemmas:  it is always true that $f\circ R_i -f\in E_\mu$. Assume by contradiction that for all $i$, $f\circ R_i -f=0$. Then $\theta(f)=\theta(f\circ R_i)=R_i^{-1}\theta(f)$. So $\theta(f)\in \RR^n$ is a fixed point of all the $R_i$'s. By hypothesis, $\theta(f)=0$ hence $f=0$. 
\end{rem}
The above two statements allow to derive some more structural properties of $E_\mu$ when the measure has enough symmetries.

\begin{theo}
With the above notation, assume that $\mathcal O_n(\mu)$ has no non-trivial invariant subspace. Then the map $\theta$ is bijective. In particular
$\dim E_\mu=n$.
Moreover, for any $f\in E_\mu\setminus\{0\}$,
\[E_\mu=\mathrm{span}\big\{ f\circ R; \, R\in \mathcal O_n(\mu)\big\}.\]
\label{theo_427}
\end{theo}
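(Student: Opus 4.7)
The plan is to combine the injectivity of $\theta$ from Lemma \ref{lem:injectivity} with the equivariance $\theta(f\circ R^{-1}) = R\theta(f)$ from Lemma \ref{lem:Omu}, and then to invoke the irreducibility of the action of $\mathcal{O}_n(\mu)$ on $\RR^n$. The whole argument is really a Schur-type reasoning applied to the map $\theta$ between the two representations $\pi$ and $\rho$ of $\mathcal{O}_n(\mu)$ discussed in the remark following Lemma~\ref{lem:Omu}.

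First I would establish bijectivity. Observe that $\theta(E_\mu) \subseteq \RR^n$ is a linear subspace, and it is preserved by $\mathcal{O}_n(\mu)$: for any $f \in E_\mu$ and $R \in \mathcal{O}_n(\mu)$, Lemma \ref{lem:Omu} gives $R\theta(f) = \theta(f \circ R^{-1}) \in \theta(E_\mu)$. Since $E_\mu \neq \{0\}$ by hypothesis, injectivity of $\theta$ (Lemma \ref{lem:injectivity}) forces $\theta(E_\mu) \neq \{0\}$. The assumption that $\mathcal{O}_n(\mu)$ has no non-trivial invariant subspace then yields $\theta(E_\mu) = \RR^n$, so $\theta$ is a linear bijection and $\dim E_\mu = n$.

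For the second assertion, fix $f \in E_\mu \setminus \{0\}$ and put $V = \mathrm{span}\{f \circ R \, ; \, R \in \mathcal{O}_n(\mu)\}$. Lemma \ref{lem:Omu} shows $V \subseteq E_\mu$. Applying $\theta$ and using $\theta(f \circ R) = R^{-1}\theta(f)$, we obtain $\theta(V) = \mathrm{span}\{R^{-1}\theta(f) \, ; \, R \in \mathcal{O}_n(\mu)\}$. This subspace is $\mathcal{O}_n(\mu)$-invariant, because for any $S \in \mathcal{O}_n(\mu)$ we have $S R^{-1}\theta(f) = (RS^{-1})^{-1}\theta(f)$, and it is nonzero since $\theta(f) \neq 0$ by injectivity. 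Irreducibility then forces $\theta(V) = \RR^n$, and the bijectivity of $\theta$ on $E_\mu$ established above gives $V = E_\mu$.

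I do not anticipate a real obstacle: both assertions follow formally from the equivariance in Lemma~\ref{lem:Omu}, the injectivity in Lemma~\ref{lem:injectivity}, and the irreducibility hypothesis. The only point requiring a moment of care is verifying that $\theta(E_\mu)$ and $\theta(V)$ really are group-invariant subspaces of $\RR^n$, which is immediate once one uses that $R \mapsto R^{-1}$ is a bijection of $\mathcal{O}_n(\mu)$.
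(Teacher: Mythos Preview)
Your proof is correct and follows essentially the same route as the paper's: both use the invariance of $\theta(E_\mu)$ (from Lemma~\ref{lem:Omu}) together with injectivity (Lemma~\ref{lem:injectivity}) and irreducibility to get surjectivity, and then repeat the same reasoning on the cyclic subspace generated by $f$. Your explicit verification of the $\mathcal{O}_n(\mu)$-invariance of $\theta(V)$ via $R\mapsto R^{-1}$ is a welcome bit of extra care, but the argument is the paper's.
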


\begin{proof}
By the above lemma, the range of $\theta$ is invariant by $\mathcal O_n(\mu)$. By Lemma~\ref{lem:injectivity}, the map $\theta$ is injective, so its  range cannot be reduced
to $\{0\}$. Therefore $\theta(E_\mu)=\RR^n$, i.e $\theta$ is surjective, hence bijective.

\medskip
Next consider $S:=\mathrm{span}\big\{f\circ R^{-1};\, R\in \mathcal O_n(\mu)\big\}\subset E_\mu$. Then, thanks to the latter lemma,
$\theta(S)=\mathrm{span}\big\{ R\theta(f);\, R\in \mathcal O_n(\mu)\big\}$ is $\mathcal O_n(\mu)$ invariant and non-zero. Therefore it is equal to $\RR^n$. Hence $S=E_\mu$.
\end{proof}

Theorem \ref{theo_427_} above  follows from Theorem \ref{theo_427}, as it is well-known by spectral theory that a locally-Lipschitz
function $f \in L^2(\mu)$ with $\int f d \mu = 0$ for which an equality in the Poincar\'e inequality is attained, belongs to $E_{\mu}$.

\medskip Eventually, let us give an example in a specific case: assume that $\mu$
has the symmetries of the cube, or equivalently that $\psi(x)=\psi\big(|x_{\sigma(1)}|,\ldots, |x_{\sigma(n)}|\big)$ for all permutations $\sigma$ of $\{1,\ldots,n\}$ and all $x\in \RR^n$. Then $\mathcal O_n(\mu)$ has no non-trivial invariant subspace and the above proposition applies. But one  can give a more precise description of the $n$-dimensional space $E_\mu$ in this case.

\medskip
Denote by $(e_i)_{i=1}^n$ the canonical basis of $\RR^n$, by $S_i$ the orthogonal symmetry with respect to the hyperplane $\{x; x_i=0\}$, and
$T_{ij}$, $i\neq j$ the linear operator on $\RR^n$ the action of which on the canonical basis is to exchange $e_i$ and $e_j$. Note that $S_i$ and $T_{ij}$ belong to $\mathcal O_n(\mu)$ and are involutive.
Since $\theta$ is bijective we define $f_i:=\theta^{-1}(e_i)$, and obtain a
basis $(f_i)_{i=1}^n$ of $E_\mu$. The relationships between vectors of $\RR^n$ and isometries in $\mathcal O_n(\mu)$ can be transfered to eigenfunctions thanks to $\theta$:
\begin{align*}
\theta(f_1)&=e_1=-S_1e_1=-S_1\theta(f_1)=\theta(-f_1\circ S_1)\\
\theta(f_1)&=e_1=S_ie_1=S_i\theta(f_1)=\theta(f_1\circ S_i),\quad \mathrm{if} \, i\neq 1\\
\theta(f_1)&=e_1=T_{ij}e_1=T_{ij}\theta(f_1)=\theta(f_1\circ T_{ij}),\quad \mathrm{if} \, i,j\neq 1
\end{align*}
imply that $f_1=-f_1\circ S_1$ and for $i,j\neq 1$, $f_1=f_1\circ S_i=f_1\circ T_{i,j}$.
In other words for any $(x_2,\ldots,x_n)$, the map $x_1\mapsto f_1(x_1,\ldots,x_n)$ is odd and for any $x_1$, the map
$(x_2,\ldots, x_n)\mapsto f_1(x_1,\ldots,x_n)$ is invariant by changes of signs and permutations of coordinates.  Still for $i\neq 1$,
\[ \theta(f_i)=e_i=T_{1i}e_1=T_{1i} \theta(f_1)= \theta(f_1\circ T_{1i}) \]
yields $f_i=f_1\circ T_{1i}$. In particular, $f_i$ is an odd function of $x_i$
and an unconditional and permutation invariant function of $(x_j)_{j\neq i}$. Consequently for $i\neq j$, $\int f_i f_j \, d\mu=0$ (the integral against $dx_i$ is equal to zero since $f_i$ is odd in $x_i$ while $f_j$ and $\psi$ are even in $x_i$).
Summarizing, $(f_1,f_1\circ T_{12},\ldots, f_1\circ T_{1n})$ is an orthogonal basis of $E_\mu$.

\section{Perturbed products}
In this section we investigate Poincar\'e inequalities for multiplicative perturbations of product measures.

\subsection{Unconditional measures}

We now describe a comparison result  which may be viewed as the higher-dimensional analog of
Proposition \ref{prop:unimodal-perturbation}, in the case of product measures.
We write $\RR^n_+ = [0, \infty)^n$.
\begin{theo}\label{theo:before-rem-cube}
	For $i=1,\ldots,n$, let  $d\mu_i(t)=\mathbf{1}_{(-b_i,b_i)}(t) e^{-V_i(t)}\, dt$ be an origin symmetric probability measure on $\mathbb  R$, with $b_i\in(0,\infty]$ and $V_i$ continuous on $\mathbb R$. Let $\rho:\mathbb R^n\to \mathbb R^+$  be such that $d\mu^{n,\rho}(x)=\rho(x)\prod_{i=1}^nd\mu_i(x_i)$ is a probability measure. Assume that $\rho$ is unconditional (i.e. $\rho(x_1,\ldots,x_n)=\rho(|x_1|,\ldots, |x_n|)$ for all $x\in \mathbb R^n$) and coordinatewise non-increasing on $\mathbb R_+^n$.
	If in addition $\mu^{n,\rho}$ is log-concave, then
	\[C_P(\mu^{n,\rho})\le C_P(\mu^{n,1})=\max_i C_P(\mu_i).\]
	This holds in particular when the measures $\mu_i$ are even and log-concave and $\rho$ is log-concave and unconditionnal.
\end{theo}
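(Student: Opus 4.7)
My plan is to combine the symmetry reduction of Corollary \ref{cor:uncond-sym} with a coordinate-by-coordinate conditioning argument that invokes Proposition \ref{prop:unimodal-perturbation}. Since each $V_i$ is even (the $\mu_i$ are origin-symmetric) and $\rho$ is unconditional, the product measure $\mu^{n,\rho}$ is unconditional, and by hypothesis it is log-concave. Corollary \ref{cor:uncond-sym} therefore reduces the proof to checking the Poincar\'e inequality $\mathrm{Var}_{\mu^{n,\rho}}(f) \le (\max_j C_P(\mu_j)) \int |\nabla f|^2\, d\mu^{n,\rho}$ only for locally-Lipschitz $f \in L^2(\mu^{n,\rho})$ that are odd in at least one coordinate, say $x_i$.

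For such an $f$ I would fix that index $i$ and condition on the remaining coordinates $x_{-i} = (x_1,\ldots,\widehat{x_i},\ldots,x_n)$. The conditional distribution $\nu_{x_{-i}}$ of the $i$-th coordinate is the probability measure on $(-b_i,b_i)$ with density proportional to $x_i \mapsto \rho(x_1,\ldots,x_n) e^{-V_i(x_i)} \mathbf{1}_{(-b_i,b_i)}(x_i)$. The one-dimensional slice $x_i \mapsto \rho(x_1,\ldots,x_n)$ is even (unconditionality of $\rho$) and non-increasing on $[0,\infty)$ (coordinatewise monotonicity of $\rho$). Hence Proposition \ref{prop:unimodal-perturbation}, applied slicewise with base measure $\mu_i$ and multiplicative perturbation this slice, yields $C_P(\nu_{x_{-i}}) \le C_P(\mu_i)$ for every $x_{-i}$. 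Because $\nu_{x_{-i}}$ is symmetric and $f(\cdot, x_{-i})$ is odd, the conditional mean $\int f\, d\nu_{x_{-i}}$ vanishes, so the law of total variance degenerates to
$$ \mathrm{Var}_{\mu^{n,\rho}}(f) = \int \mathrm{Var}_{\nu_{x_{-i}}}(f)\, dm(x_{-i}), $$
where $m$ is the $x_{-i}$-marginal of $\mu^{n,\rho}$. Applying the conditional Poincar\'e inequality and integrating gives
$$ \mathrm{Var}_{\mu^{n,\rho}}(f) \le C_P(\mu_i) \int (\partial_i f)^2\, d\mu^{n,\rho} \le \max_j C_P(\mu_j) \int |\nabla f|^2\, d\mu^{n,\rho}. $$

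The identity $C_P(\mu^{n,1}) = \max_i C_P(\mu_i)$ is the classical tensorization of Poincar\'e inequalities, with the matching lower bound obtained by testing on functions depending only on a single coordinate. The \emph{in particular} clause at the end is then immediate: log-concavity of $\mu^{n,\rho}$ follows from log-concavity of $\rho$ and of each $\mu_i$, and a log-concave even function on $\RR$ is automatically non-increasing on $[0,\infty)$, so the one-dimensional slices of a log-concave unconditional $\rho$ inherit the required monotonicity. I anticipate no serious obstacle; the only point to be careful about is the verification that the unconditionality and coordinatewise monotonicity of $\rho$ correctly descend to the chosen one-dimensional slice so that Proposition \ref{prop:unimodal-perturbation} is in fact applicable, but this is a matter of unpacking definitions rather than a genuine difficulty.
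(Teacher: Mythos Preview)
Your proposal is correct and follows essentially the same route as the paper's proof: reduce via Corollary~\ref{cor:uncond-sym} to functions odd in some coordinate $x_i$, condition on $x_{-i}$, apply Proposition~\ref{prop:unimodal-perturbation} to the one-dimensional slice (which is an even unimodal perturbation of $\mu_i$), and use that the oddness kills the conditional mean so that the variance equals the integrated conditional variance. Your handling of the tensorization identity and of the ``in particular'' clause is also in line with the paper.
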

\begin{proof}
	Since $\mu^{n,\rho}$ is log-concave and unconditional, we know by Corollary~\ref{cor:uncond-sym} that it is enough to prove the Poincar\'e inequality for functions
	which are odd with respect to one coordinate. Let $f:\mathbb R^n\to \mathbb R$ be locally Lipschitz, and assume that it is odd in the first variable (other variables are dealt with in the same way). Then by symmetry $\int f \, d\mu^{n,\rho}=0$, so that
	\[ \mathrm{Var}_{\mu^{n,\rho}}(f)= \int f^2 d\mu^{n,\rho}=\int \left( \int_{\mathbb R} f^2(x) \rho(x) d\mu_1(x_1)\right) \prod_{i\ge 2} d\mu_i(x_i). \]
	In the sense of   $\mu_2\otimes\cdots\otimes \mu_n$, for almost every $\overline{x}:=(x_2,\ldots,x_n)$, $Z_{\overline{x}}:=\int_{\mathbb R}
	\rho(x) d\mu_1(x_1)<+\infty$. Thus way may consider the probability measure $\rho(x_1,\overline{x}) d\mu_1(x_1)/ Z_{\overline{x}}$. It is
	a perturbation of an even probability measure on $\mathbb R$, by the even unimodal function $x_1\mapsto \rho(x_1,\overline{x})/Z_{\overline{x}}$. Hence by
	Proposition \ref{prop:unimodal-perturbation}, its Poincar\'e constant is at most $C_P(\mu_1)$. Since $x_1 \mapsto f(x_1,\overline{x})$ is odd,
	it has a zero average for the later measure and we get
	\[ \int_{\mathbb R} f^2(x) \rho(x)\frac{d\mu_1(x_1)}{Z_{\overline{x}}} \le C_P(\mu_1) \int_{\mathbb R} (\partial_1 f(x))^2  \rho(x)\frac{d\mu_1(x_1)}{Z_{\overline{x}}}.\]
	Cancelling $ Z_{\overline{x}}$ and plugging in the former equality, we get
	\[ \mathrm{Var}_{\mu^{n,\rho}}(f)\le \int \left( C_P(\mu_1) \int_{\mathbb R} (\partial_1 f(x))^2 \rho(x) d\mu_1(x_1)\right) \prod_{i\ge 2} d\mu_i(x_i)\le  \max_i C_P(\mu_i) \int |\nabla f|^2  d \mu^{n,\rho}.
	\qedhere
	\]
\end{proof}
\begin{rem}
	The hypothesis of unconditionality on the perturbation $\rho$ cannot be dropped, as the following example shows. Denote by $U([a,b])$ the uniform probability measure on $[a,b]$. Classically, $C_P(U([a,b]))=(b-a)^2/\pi^2$. We choose $\mu_i=U([-\frac12,\frac12])$. Then the measure $\mu^{n,1}$ is uniform on the unit cube $C_n:=[-\frac12,\frac12]\subset \mathbb R^n$, and $C_P(\mu^{n,1})=\pi^{-2}$.
	Let $\varepsilon \in (0,1)$ and consider an orthogonal parallelotope $P_\varepsilon$ included in the cube $C_n$ and of maximal side length $(1-\varepsilon )\sqrt{n}$ (such parallelotopes are easily constructed. When $\varepsilon $ tends to zero they collapse to the main diagonal of the cube, the length of which is $\sqrt n$). Then define $\rho_\varepsilon=\mathbf{1}_{P_\varepsilon}/\mathrm{Vol}(P_\varepsilon)$. Clearly $\mu^{n,\rho_\varepsilon}$ is the uniform measure on $P_\varepsilon$, which is a product measure. So by the tensorisation property
	$C_P(\mu^{n,\rho_\varepsilon})=\frac{1}{\pi^2} ((1-\varepsilon)\sqrt n)^2$.
\end{rem}

\begin{rem} The product hypothesis is also important. Consider  the uniform measure $U(\sqrt{n}B_2^n)$ on the Euclidean Ball of radius $\sqrt n$ in $\mathbb R^n$, for $n\ge 2$. It is well-known  that $\sup_n C_P(U(\sqrt{n}B_2^n))<+\infty$. For $\varepsilon\in (0,1)$, define the unconditional parallelotope
	\[Q_\varepsilon=\left\{x\in\mathbb R^n; |x_1|\le \sqrt{n-\varepsilon}\; \mathrm{and}\, \forall i\ge 2, |x_i|\le \sqrt{\frac{\varepsilon}{n-1}} \right\}\subset \sqrt{n} B_2^n.\]
	Since it is a product set, $C_P(U(Q_\varepsilon))=C_P\big(U([-\sqrt{n-\varepsilon}, \sqrt{n-\varepsilon}])\big)=(n-\varepsilon)/\pi^2$. Hence  $U(Q_\varepsilon)$ is an unconditional and log-concave perturbation of $U(\sqrt{n}B_2^n)$, which is itself log-concave and unconditional. Nevertheless  the former  has a much larger Poincar\'e constant than the latter when the dimension grows. See also Section 3.3 below.
\end{rem}

\subsection{The general case}
The above examples show that a dimension dependence is sometimes needed, of order $n$ for the covariances and Poincar\'e constants. We show next that this is as bad as it gets, and that such a control of the covariance can be obtained independently of the even log-concave perturbation.

\begin{theo}
Let $\mu_1, \ldots,\mu_n$ be even log-concave probability measures on $\mathbb R$, and let $\rho:\R^n\to \R^+$ be an even log-concave function such that
\[  d\mu_\rho(x):= \rho(x) \prod_{i=1}^n d\mu_i(x_i),\quad x\in \R^n\]
is a probability measure.  Then, covariance matrices can be compared:
\[ \mathrm{Cov}(\mu^{n,\rho})\le n \, \mathrm{Cov}(\mu^{n,1}).\]
Moreover,
\[ C_p(\mu^{n,\rho}) \le c \sum_{i=1}^n \mathrm{Var}(\mu_i)
\le  c\, n \max_{i} C_P(\mu_i)=c\, n \, C_P(\mu^{n,1}),\]
where $c$ is a universal constant.
\end{theo}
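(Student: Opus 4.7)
The plan is to reduce the whole theorem to the one-dimensional marginal bound
\[ \int_{\RR^n} x_i^2\, d\mu^{n,\rho}(x) \le \mathrm{Var}(\mu_i) \qquad (i=1,\ldots,n). \]
Once this is in hand, the matrix inequality follows by a Cauchy--Schwarz argument on the quadratic form $\langle \mathrm{Cov}(\mu^{n,\rho})\theta,\theta\rangle$, and the Poincar\'e estimate drops out of Bobkov's trace bound~\eqref{eq:trace-bound} applied to the log-concave probability measure $\mu^{n,\rho}$.

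To establish the marginal bound, the idea is to analyze the $i$-th marginal $\tilde\mu_i$ of $\mu^{n,\rho}$, whose density with respect to $\mu_i$ is
\[ \tilde\rho_i(x_i) = \int_{\RR^{n-1}} \rho(x)\prod_{j\neq i} d\mu_j(x_j). \]
The integrand is a log-concave function of $x\in\RR^n$ (as a product of log-concave functions---$\rho$ together with the densities of the $\mu_j$'s), so Pr\'ekopa--Leindler implies that $\tilde\rho_i$ is log-concave; it is even by inheritance from $\rho$ and the $\mu_j$, and an even log-concave function on $\RR$ is non-increasing on $\RR_+$. Chebyshev's correlation inequality (applied on $\RR_+$, using evenness to halve the measure) then pairs the non-increasing $\tilde\rho_i$ with the non-decreasing $x_i^2$ to yield
\[ \int_{\RR} x_i^2\,\tilde\rho_i(x_i)\, d\mu_i(x_i) \le \Bigl(\int \tilde\rho_i\,d\mu_i\Bigr)\Bigl(\int x_i^2\,d\mu_i\Bigr) = \mathrm{Var}(\mu_i), \]
where the last equality uses $\int \tilde\rho_i\,d\mu_i = 1$.

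With the marginal bound available, evenness of $\mu^{n,\rho}$ together with the coordinate-wise Cauchy--Schwarz inequality $\langle x,\theta\rangle^2 \le n\sum_i \theta_i^2 x_i^2$ gives
\[ \langle \mathrm{Cov}(\mu^{n,\rho})\theta,\theta\rangle = \int \langle x,\theta\rangle^2\, d\mu^{n,\rho} \le n\sum_i \theta_i^2 \mathrm{Var}(\mu_i) = n\langle \mathrm{Cov}(\mu^{n,1})\theta,\theta\rangle, \]
proving the matrix comparison. Bobkov's bound~\eqref{eq:trace-bound} then yields
\[ C_P(\mu^{n,\rho}) \le c\,\mathrm{Tr}\bigl(\mathrm{Cov}(\mu^{n,\rho})\bigr) = c\sum_i \int x_i^2\,d\mu^{n,\rho} \le c\sum_i \mathrm{Var}(\mu_i), \]
and finally $\sum_i \mathrm{Var}(\mu_i) \le n\max_i \mathrm{Var}(\mu_i) \le n\max_i C_P(\mu_i) = n\,C_P(\mu^{n,1})$ by testing Poincar\'e with the linear function $f(x)=x_i$ and invoking tensorization. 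The crux is the marginal second-moment estimate; the factor $n$ in the covariance comparison is genuinely necessary, as the diagonal-parallelotope example preceding the theorem shows.
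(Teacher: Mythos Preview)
Your proof is correct and follows the same overall architecture as the paper: establish the coordinate second-moment bound $\int x_i^2\, d\mu^{n,\rho}\le\sigma_i^2$, deduce the covariance comparison via Cauchy--Schwarz, and finish with Bobkov's trace bound~\eqref{eq:trace-bound}.

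The one place where you diverge from the paper is in the derivation of the marginal bound. The paper argues via the peaked ordering: Chebyshev gives $t^2\,d\mu_i(t)\prec\sigma_i^2\,\mu_i$ in one dimension, and then Kanter's tensorization theorem is invoked to lift this to $x_i^2\,d\mu^{n,1}\prec\sigma_i^2\,\mu^{n,1}$, after which one integrates against $\rho$. You instead integrate out the other coordinates first, use Pr\'ekopa--Leindler to see that the resulting one-dimensional density $\tilde\rho_i$ is even and log-concave (hence non-increasing on $\RR_+$), and apply Chebyshev directly in dimension one. Your route is slightly more self-contained, trading the peaked-ordering machinery for the more familiar Pr\'ekopa--Leindler theorem; the paper's route, on the other hand, makes the monotonicity-under-tensorization structure explicit. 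Your Cauchy--Schwarz step $\langle x,\theta\rangle^2\le n\sum_i\theta_i^2 x_i^2$ is also a minor streamlining of the paper's version, which first bounds the cross terms $\int x_ix_j\,d\mu^{n,\rho}\le\sigma_i\sigma_j$ and then applies Cauchy--Schwarz to $(\sum_i|\theta_i|\sigma_i)^2$.
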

\begin{proof}
We start with the covariance inequality.
Set $\sigma_i^2=\mathrm{Var}(\mu_i)$. Let $g$ be an even log-concave function on $\mathbb R$. Then since $g$ is non-increasing on $\mathbb R^+$,
\[ \int_{\mathbb R} t^2 g(t) \, d\mu_i(t) \le \left( \int_{\mathbb R} t^2   d\mu_i(t)\right)  \left( \int_{\mathbb R}  g(t) \, d\mu_i(t)\right) .\]
Indeed, by symmetry this follows from  the basic fact that $2\mathrm{cov}_{m}(f,g)=\int_{(R_+)^2} (f(x)-f(y))(g(x)-g(y)) \, dm(x) dm(y)\le 0$ if $m$ is a probability measure on $\mathbb R^+$, $f$ is non-decreasing and $g$ is non-increasing.
The above inequality, sometimes referred to as Chebyshev's sum inequality, can be restated in terms of the peaked ordering as
$ t^2 d\mu_i(t)\prec \sigma_i^2 \mu_i $.
Such an inequality is preserved by taking on both side the tensor product with an even log-concave measure (e.g. Kanter \cite[Corollary 3.2]{Kanter}).
Hence, tensorizing with $\otimes_{j\neq i} \mu_j$
\[     x_i^2  d\mu_1(x_1)\ldots d\mu_n(x_n) \prec \sigma_i^2 \mu_1\otimes\cdots\otimes \mu_n  .\]
This means that the left-hand side measure has smaller integral against even log-concave functions. Applying this with $\rho$ gives
\begin{equation}
 \label{eq:bound-coordinate-variance}
\int x_i^2 d\mu^{n,\rho}(x)\le \sigma_i^2.
\end{equation}
This is enough to upper bound the covariance matrix. Indeed, for $\theta\in \mathbb R^n$,
\begin{align*}
\mathrm{Var}_{\mu^{n,\rho}}(\langle \cdot, \theta\rangle)& =\int \langle x,\theta\rangle^2 d\mu^{n,\rho}(x)=\sum_{i,j} \int x_ix_j \theta_i\theta_j \, d\mu^{n,\rho}(x) \\
& \le \sum_{i,j} |\theta_i|\, |\theta_j| \left(\int x_i^2 d\mu^{n,\rho}(x) \right)^{\frac12}\left(\int x_j^2 d\mu^{n,\rho}(x) \right)^{\frac12} \\
& \le  \sum_{i,j}\sigma_i  |\theta_i| \sigma_j |\theta_j|   = \left( \sum_{i=1}^n  |\theta_i|  \sigma_i\right) ^2 \\
& \le n \sum_{i=1}^n  \sigma_i^2 \theta_i^2= n \, \mathrm{Var}_{\mu_1\otimes\cdots\otimes \mu_n}(\langle \cdot, \theta\rangle) .
\qedhere
\end{align*}
Eventually, since $\mu^{n,\rho}$ is log concave, we may apply Inequality \eqref{eq:trace-bound}
\[C_P(\mu^{n,\rho})\le c \, \mathrm{Tr}\big(\mathrm{Cov}(\mu^{n,\rho})\big)= c\sum_{i=1}^n \int x_i^2 d\mu^{n,\rho}(x).\]
We conclude thanks to \eqref{eq:bound-coordinate-variance}.
\end{proof}

\subsection{Gaussian mixtures}
\label{sec_GM}

In this section, we consider $n$ probability measures on $\R$ which are absolutely continuous {\it Gaussian mixtures}. This means that  $\mu_i(dt)=\varphi_i(t)\, dt$ for $i=1,\ldots,n$ with
\begin{equation}
\varphi_i(t)=  \int_{\R_+^*} \frac{e^{-\frac{t^2}{2\sigma^2}}}{\sigma \sqrt{2\pi}} \, dm_i(\sigma) ,\quad t\in \R \label{eq_546}
\end{equation}
where $m_i$ is a probability measure
and $\RR^*_+ = (0, \infty)$. In other words if $R_i$ is a random variable with law $m_i$ and is independent of a standard Gaussian variable $Z$, then the product $R_iZ$ is distributed according to $\mu_i$. These measures were considered by Eskenazis, Nayar and Tkocz \cite{ENT-mixtures}, who showed that several geometric and entropic properties of Gaussian measures  extend to Gaussian mixtures.

\subsubsection{Using the covariance}

For log-concave probability measures, it is known that the Poincar\'e constant is related to the operator norm of the covariance matrix of the measure.
In order to estimate the covariance, we use an extension by Eskenazis, Nayar and Tkocz of the Gaussian correlation inequality due to Royen \cite{royen}.
A function $f$ is quasi-concave if its upper level sets $\{ x ; f(x) \geq t \}$
are convex for all $t$.

\begin{theo}[\cite{ENT-mixtures}]
Let $\mu_1,\ldots,\mu_n$ be probability measures on $\mathbb R$ which all are Gaussian mixtures.  Let $f,g:\mathbb R^n\to \mathbb R^+$
be even and quasi-concave, then for $\mu=\mu_1\otimes\cdots\otimes \mu_n$,
\[ \int fg \, d\mu \ge \left( \int f\, d\mu\right) \left( \int g\, d\mu\right).\]
\label{thm_548}
\end{theo}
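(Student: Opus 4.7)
The plan is to write $\mu$ as a mixture of centered product Gaussians, to apply Royen's Gaussian correlation inequality on each Gaussian fibre, and to conclude by a Harris-type (FKG) correlation for coordinatewise monotone functions under the product mixing measure. Since each $\mu_i = \int_{\R^*_+}\gamma_\sigma\, dm_i(\sigma)$, where $\gamma_\sigma$ is the centered Gaussian of variance $\sigma^2$, Fubini's theorem writes
\[ \mu \;=\; \int_{(\R^*_+)^n}\gamma_\Sigma\,dm(\Sigma), \qquad \gamma_\Sigma:=\gamma_{\sigma_1}\otimes\cdots\otimes\gamma_{\sigma_n}, \quad m:=m_1\otimes\cdots\otimes m_n. \]

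First I would establish a fibrewise inequality of the form $\int fg\,d\gamma_\Sigma \ge F(\Sigma)G(\Sigma)$, where $F(\Sigma):=\int f\,d\gamma_\Sigma$ and $G(\Sigma):=\int g\,d\gamma_\Sigma$. Since $f,g\ge 0$ are even and quasi-concave, their upper level sets $\{f>s\}$ and $\{g>t\}$ are origin-symmetric convex subsets of $\R^n$, and Royen's theorem \cite{royen} gives
\[ \gamma_\Sigma\big(\{f>s\}\cap\{g>t\}\big) \;\ge\; \gamma_\Sigma\big(\{f>s\}\big)\,\gamma_\Sigma\big(\{g>t\}\big),\qquad s,t>0. \]
Integrating this against $ds\,dt$ via the layer-cake identity $fg=\int_0^\infty\!\int_0^\infty \mathbf{1}_{\{f>s\}\cap\{g>t\}}\,ds\,dt$ yields the desired pointwise bound.

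Integrating the fibrewise inequality against $m(d\Sigma)$ reduces the theorem to the correlation inequality $\int FG\,dm \ge \int F\,dm\cdot \int G\,dm$, whose right-hand side is precisely $\int f\,d\mu\cdot\int g\,d\mu$. Because $m$ is a product probability measure on $(\R^*_+)^n$, the Harris inequality (an $n$-fold iteration of the one-dimensional Chebyshev sum inequality) supplies this bound as soon as $F$ and $G$ are coordinatewise monotone in the same direction. Verifying that $F$ (and likewise $G$) is non-increasing in each $\sigma_i$ is the main obstacle of the plan. I would derive it from Anderson's inequality: for an origin-symmetric convex body $K$ and a centered even log-concave measure such as $\gamma_\Sigma$, the map $a\mapsto \gamma_\Sigma(K-a)$ is maximal at $a=0$. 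Writing $\Sigma':=\Sigma+\tau^2 e_i e_i^\top$, the Gaussian $\gamma_{\Sigma'}$ is the convolution of $\gamma_\Sigma$ with the one-dimensional centered Gaussian $\gamma_\tau$ of variance $\tau^2$ supported on the axis $\R e_i$, so that
\[ \gamma_{\Sigma'}(K) \;=\; \int_{\R} \gamma_\Sigma(K-se_i)\,d\gamma_\tau(s) \;\le\; \gamma_\Sigma(K). \]
A further layer-cake argument (applied to $K=\{f>t\}$ and integrated in $t$) promotes this to $F(\Sigma')\le F(\Sigma)$ for arbitrary even quasi-concave $f$, establishing the required coordinatewise monotonicity. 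Combining the fibrewise Gaussian correlation with Harris's inequality applied to $m$ then closes the proof.
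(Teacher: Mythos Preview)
The paper does not give its own proof of this theorem; it is quoted verbatim from \cite{ENT-mixtures} and used as a black box. So there is no in-paper argument to compare against.

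Your proposal is correct and is essentially the argument of Eskenazis--Nayar--Tkocz. The three ingredients---writing $\mu$ as a mixture of diagonal Gaussians via Fubini, applying Royen's correlation inequality fibrewise (extended to quasi-concave $f,g$ by the layer-cake representation), and then using the Harris/FKG inequality for the product mixing measure $m$ together with the coordinatewise monotonicity of $\Sigma\mapsto\int f\,d\gamma_\Sigma$ coming from Anderson's lemma---are exactly the steps in \cite{ENT-mixtures}. Two small remarks: for the fibrewise step you only need the Gaussian correlation inequality for \emph{product} Gaussians, which was known long before Royen (e.g.\ it follows from the $2$-set case of Pitt and tensorisation), so invoking Royen is overkill but certainly valid; and in the monotonicity step your convolution identity $\gamma_{\Sigma'}=\gamma_\Sigma*\gamma_\tau^{(i)}$ is correct because the covariances are diagonal, which is the only place the product structure of $\gamma_\Sigma$ is genuinely used.
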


\begin{rem}
The inequality is actually valid for the more general class of even and unimodal functions (i.e. increasing limits of positive combinations of indicators of origin symmetric convex sets).
\end{rem}

For our purpose we rather need a weaker version of Theorem \ref{thm_548}. Let $c:\mathbb R^n\to \R$ be an even convex function, and $g$ be even and log-concave; for $\varepsilon>0$, consider the log-concave function $f=\exp(-\varepsilon g)$.
Then the above theorem gives
$\int e^{-\varepsilon c}g \, d\mu \ge \left(\int e^{-\varepsilon c} \, d\mu\right)
\left( \int g \, d\mu\right)$. There is equality for $\varepsilon=0$, so comparing derivatives at $\varepsilon =0$ yields that an even convex and an even log-concave function are negatively correlated for $\mu$:
\begin{equation}\label{eq:convex-logconcave}
 \int cg \, d\mu \le \left( \int c\, d\mu\right) \left( \int g\, d\mu\right).
\end{equation}
In the case of centered Gaussian measures, this negative correlation property between even convex and even log-concave functions was established first by Harg\'e \cite{harge}.

\begin{proposition}\label{th:GM-odd-functions}
Let $\mu_1,\ldots,\mu_n$ be Gaussian mixtures, and let $\rho:\mathbb R^n\to \mathbb R^+$ be an even log-concave function such that the measure $d\mu^{n,\rho}(x)=\rho(x) \prod_{i=1}^n d\mu_i(x_i)$ is a probability measure on $\mathbb R^n$.  Then
\[ \mathrm{Cov}(\mu^{n,\rho})\le \mathrm{Cov}(\mu^{n,1})\] 
If in addition $\mu^{n,\rho}$ is log-concave (which is true if the measures  $\mu_i$ and the function $\rho$ are log-concave), then
\[ C_p(\mu^{n,\rho}) \le c\, n^{\frac12} \max_{i} \mathrm{Var}(\mu_i)
\le  c\, n^{\frac12} \max_{i} C_P(\mu_i)=c\, n^{\frac12} C_P(\mu^{n,1}),\]
where $c$ is a universal constant.
\end{proposition}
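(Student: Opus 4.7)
The plan is to deduce the covariance comparison directly from inequality \eqref{eq:convex-logconcave} and then feed it into the Lee--Vempala bound stated after \eqref{eq:trace-bound}; no new ingredients beyond what has already been collected in this section are required.

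For the covariance inequality, I would fix $\theta \in \RR^n$ and observe that, since $\mu^{n,\rho}$ is even,
$\mathrm{Var}_{\mu^{n,\rho}}(\langle \cdot, \theta \rangle) = \int \langle x, \theta\rangle^2 \rho(x)\, d\mu^{n,1}(x)$.
Now $x \mapsto \langle x, \theta\rangle^2$ is even and convex, $\rho$ is even and log-concave, and $\mu^{n,1} = \mu_1 \otimes \cdots \otimes \mu_n$ is a product of Gaussian mixtures, so the negative correlation inequality \eqref{eq:convex-logconcave} applies and yields
$\int \langle x, \theta\rangle^2 \rho\, d\mu^{n,1} \le \bigl(\int \langle x, \theta\rangle^2 d\mu^{n,1}\bigr)\bigl(\int \rho\, d\mu^{n,1}\bigr) = \mathrm{Var}_{\mu^{n,1}}(\langle \cdot, \theta\rangle)$,
using the normalization $\int \rho\, d\mu^{n,1} = \mu^{n,\rho}(\RR^n) = 1$. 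Since this holds for every $\theta$, we obtain $\mathrm{Cov}(\mu^{n,\rho}) \le \mathrm{Cov}(\mu^{n,1})$ in the positive semidefinite order.

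For the Poincar\'e bound, assuming $\mu^{n,\rho}$ is log-concave I would invoke Lee--Vempala: $C_P(\mu^{n,\rho}) \le c\, \|\mathrm{Cov}(\mu^{n,\rho})\|_{HS}$. For any $n \times n$ PSD matrix $A$ one has $\|A\|_{HS} \le \sqrt{n}\, \|A\|_{op}$, so
$C_P(\mu^{n,\rho}) \le c\sqrt{n}\, \|\mathrm{Cov}(\mu^{n,\rho})\|_{op} \le c\sqrt{n}\, \|\mathrm{Cov}(\mu^{n,1})\|_{op} = c\sqrt{n}\, \max_i \mathrm{Var}(\mu_i)$,
where the second inequality uses the covariance comparison just proved and the last equality exploits the fact that $\mathrm{Cov}(\mu^{n,1})$ is diagonal because $\mu^{n,1}$ is a product measure. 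The final chain of inequalities in the statement is closed by $\mathrm{Var}(\mu_i) \le C_P(\mu_i)$ (test the one-dimensional Poincar\'e inequality on $t \mapsto t$) together with the tensorization identity $C_P(\mu^{n,1}) = \max_i C_P(\mu_i)$.

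The only substantive step is the first; the rest is bookkeeping. I do not anticipate a real obstacle, since the Gaussian-mixture hypothesis is used exclusively to legitimize the single application of \eqref{eq:convex-logconcave}, and the $\sqrt{n}$ factor is inherited from Lee--Vempala rather than from the method itself, so any future improvement of KLS-type bounds for log-concave measures would propagate automatically. Log-concavity of $\mu^{n,\rho}$ is precisely what makes Lee--Vempala applicable, and the parenthetical sufficient condition (log-concave $\mu_i$ and $\rho$) is exactly what ensures it.
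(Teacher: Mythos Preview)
Your proposal is correct and follows essentially the same route as the paper: the covariance comparison is obtained by applying the correlation inequality \eqref{eq:convex-logconcave} to the even convex function $x\mapsto\langle x,\theta\rangle^2$ and the even log-concave function $\rho$, and the Poincar\'e bound then follows from the Lee--Vempala estimate $C_P(\eta)\le c\sqrt{n}\,\|\mathrm{Cov}(\eta)\|_{op}$. The paper quotes this last inequality directly rather than passing through $\|\cdot\|_{HS}$, but this is the same content.
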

\begin{proof}
Let $\theta\in \mathbb R^n$. Since $x\mapsto \langle x,\theta\rangle^2$ is even and convex, the correlation inequality \eqref{eq:convex-logconcave} yields
\[ \int  \langle x,\theta\rangle^2 \rho(x) \prod d\mu_i(x_i) \le \left(\int \langle x,\theta\rangle^2  \prod d\mu_i(x_i) \right)
\int  \rho(x) \prod d\mu_i(x_i).
\]
Since the measures are centered, this can  be rewritten as
\[\mathrm{Var}_{\mu^{n,\rho}} \big( \langle \cdot, \theta\rangle\big)
\le \mathrm{Var}_{\mu^{n,1}} \big( \langle \cdot, \theta\rangle\big),\quad \theta \in \mathbb R^n.\]
Hence the covariance inequality is proved. For the second part of the statement, we apply the best  general result towards the Kannan-Lovasz-Simonovits conjecture, recalled in Section
\ref{sec:KLS}: for  every log-concave probability measure $\eta$ on $\mathbb R^n$,
$C_P(\eta)\le c\, n^{1/2} \|\mathrm{Cov}(\eta)\|_{op}$.
\end{proof}
\begin{rem}
The KLS conjecture predicts that for some universal constant $\kappa$ and for all log-concave probability measures $\eta$, $C_P(\eta)\le \kappa  \|\mathrm{Cov}(\eta)\|_{op}$.
If it were confirmed, then the conclusion of the above theorem could be improved to $C_P(\mu^{n,\rho})\le \kappa \, C_P(\mu^{n,1})$.
\end{rem}

\begin{rem}
The correlation inequality proves that $\mu^{n,\rho}\succ \mu^{n,1}$
for the peaked ordering on measures: $\mu\succ \nu$ means $\mu(K)\ge \nu(K)$ for all origin-symmetric convex sets, and imples $\int f d\mu\ge \int f d\nu$ for all (even) unimodal functions. Also, the weaker correlation inequality \eqref{eq:convex-logconcave} implies that $\mu^{n,\rho}$ is dominated by $\mu^{n,1}$ in the Choquet ordering (integrating against convex functions).
\end{rem}

\subsubsection{Direct approach}
Working directly on the Poincar\'e inequality, we will improve the $n^{1/2}$ to $\log(n)$ in Proposition \ref{th:GM-odd-functions}.

\begin{lemma}\label{lem:poincCM}
Let $\mu_1, \ldots,\mu_n$ be Gaussian mixtures as in (\ref{eq_546}), and let $\rho:\R^n\to \R^+$ be an even log-concave function such that
\[  d\mu^{n,\rho}(x):= \rho(x) \prod_{i=1}^n d\mu_i(x_i),\quad x\in \R^n\]
is a probability measure. Then for every odd and locally Lipschitz function $f:\R^n\to \R$, it holds
\[ \mathrm{Var}_{\mu^{n,\rho}}(f)\le \int \sum_{i=1}^n \alpha_i(x_i) (\partial_i f(x))^2 d\mu^{n,\rho}(x),\]
where
\[ \alpha_i(t):= \frac{1}{\varphi_i(t)} \int_{\R_+^*} \sigma\frac{e^{-\frac{t^2}{2\sigma^2}}}{ \sqrt{2\pi}} \, dm_i(\sigma) =\frac{1}{\varphi_i(t)}\int_{|t|}^{+\infty} u \varphi_i(u)\, du,\quad t\in \R.\]
\end{lemma}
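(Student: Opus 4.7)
The plan is to exploit the Gaussian mixture structure by lifting $\mu^{n,\rho}$ to a joint measure on $(x,\sigma) \in \R^n \times (\R^*_+)^n$:
$$ d\tilde\mu(x,\sigma) = \frac{1}{Z}\, \rho(x) \prod_{i=1}^n \frac{e^{-x_i^2/(2\sigma_i^2)}}{\sigma_i\sqrt{2\pi}}\, dx\, \prod_{i=1}^n dm_i(\sigma_i). $$
Integrating out $\sigma$ gives back $\mu^{n,\rho}$; conditioning on $\sigma$ produces the measure $\nu_\sigma(dx) \propto \rho(x)\, e^{-\frac12 \langle \Sigma_\sigma^{-1}x,x\rangle}dx$ with $\Sigma_\sigma=\mathrm{diag}(\sigma_1^2,\ldots,\sigma_n^2)$. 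Because $\rho$ is log-concave and even, $\nu_\sigma$ is a centered, log-concave perturbation of a centered Gaussian, so Corollary~\ref{cor:BL} yields
$$ \mathrm{Var}_{\nu_\sigma}(f) \le \int \sum_{i=1}^n \sigma_i^2\, (\partial_i f)^2\, d\nu_\sigma. $$

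The second step is the variance decomposition along the fibers. Since $\rho$ is even, the measures $\mu^{n,\rho}$ and $\nu_\sigma$ (for every $\sigma$) are symmetric under $x\mapsto -x$, so for odd $f$ one has $\int f\,d\mu^{n,\rho} = \int f\,d\nu_\sigma = 0$, whence $\mathrm{Var}_{\mu^{n,\rho}}(f) = \int f^2 d\mu^{n,\rho}$ and $\mathrm{Var}_{\nu_\sigma}(f) = \int f^2 d\nu_\sigma$. Writing $\int f^2 d\mu^{n,\rho} = \int (\int f^2 d\nu_\sigma)\, d\pi(\sigma)$, where $\pi$ is the $\sigma$-marginal of $\tilde\mu$, and inserting the fiberwise Brascamp--Lieb bound, one obtains
$$ \mathrm{Var}_{\mu^{n,\rho}}(f) \le \int_{\R^n\times (\R^*_+)^n} \sum_{i=1}^n \sigma_i^2\, (\partial_i f(x))^2\, d\tilde\mu(x,\sigma). $$

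The final step is to disintegrate $\tilde\mu$ the other way, conditioning on $x$, and to identify $\alpha_i(x_i)$ as a conditional moment. The joint density of $(x,\sigma)$ has the crucial factorization property that, conditional on $x$, the $\sigma_i$ are independent, and $\sigma_i$ depends only on $x_i$, with density (w.r.t.\ $m_i$) proportional to $\sigma_i^{-1} e^{-x_i^2/(2\sigma_i^2)}$ whose normalizing constant is precisely $\sqrt{2\pi}\,\varphi_i(x_i)$. Hence
$$ \mathbb E[\sigma_i^2 \mid x] = \frac{\int_{\R^*_+}\sigma_i\, e^{-x_i^2/(2\sigma_i^2)}\, dm_i(\sigma_i)}{\sqrt{2\pi}\,\varphi_i(x_i)} = \alpha_i(x_i). $$
Substituting $\sum_i \sigma_i^2 (\partial_i f(x))^2$ by its $x$-conditional expectation and integrating against $\mu^{n,\rho}(dx)$ yields the claimed inequality.

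The only nontrivial verification is the equivalence of the two formulas for $\alpha_i$, but this is immediate from Fubini together with the identity $\int_{|t|}^\infty u\, e^{-u^2/(2\sigma^2)}\,du = \sigma^2 e^{-t^2/(2\sigma^2)}$. Otherwise the proof is essentially an unfolding of the mixture representation, where the key conceptual move — and potentially the most delicate point to state precisely — is the application of Brascamp--Lieb uniformly in $\sigma$, which requires the log-concavity of $\rho$ and allows one to pass from Gaussian weights $\sigma_i^2$ to the $x$-dependent weights $\alpha_i(x_i)$ by conditioning.
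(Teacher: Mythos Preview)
Your proof is correct and follows essentially the same approach as the paper's: both arguments expand the mixture representation via Fubini, apply the Brascamp--Lieb inequality (Corollary~\ref{cor:BL}) to the even log-concave perturbation of the Gaussian for each fixed $\sigma$, use oddness of $f$ and evenness of the measures to replace variances by second moments (so that the normalizing constants $Z_\sigma$ cancel), and then swap the order of integration to identify the weight $\alpha_i(x_i)$. Your phrasing in terms of disintegration and conditional moments $\mathbb{E}[\sigma_i^2\mid x]=\alpha_i(x_i)$ is a clean repackaging of exactly the same computation the paper carries out explicitly.
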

\begin{proof}
Since $f$ is odd and $\mu^{n,\rho}$ has an even density,
\begin{align*}
 \mathrm{Var}_{\mu^{n,\rho}}(f)& =\int f^2 \, d\mu^{n,\rho}= \int f^2(x)\rho(x) \prod_{j=1}^n \left( \int_{\R_+^*} \frac{e^{-\frac{x_j^2}{2\sigma_j^2}}}{\sigma_j \sqrt{2\pi}} \, dm_j(\sigma_j) \right) \,  dx \\
 &= \int_{(\R_+^*)^n} \left( \int_{\mathbb R^n} f^2(x) \rho(x) e^{-\frac12 \sum_j \frac{x_j^2}{\sigma_j^2}} \frac{dx}{(2\pi)^{n/2}\prod_j \sigma_j}\right) \prod_{j=1}^n dm_j(\sigma_j)
\end{align*}
For each $(\sigma_i)_i$ we estimate the inner integral from above thanks to the Brascamp-Lieb inequality, applied to the probability measure
 \[ dM_\sigma(x)= \frac{1}{Z_\sigma} \rho(x) e^{-\frac12 \sum_j \frac{x_j^2}{\sigma_j^2}} \frac{dx}{(2\pi)^{n/2}\prod_i \sigma_j}\]
Since $M_\sigma$ is  log-concave with respect to the Gaussian measure
$x \mapsto \exp(-\frac 12 \langle
\mathrm{Diag}(\sigma)^2 x, x \rangle)$,  the Brascamp-Lieb inequality
in the form of Corollary \ref{cor:BL} gives
\[ \mathrm{Var}_{M_\sigma}(f)\le \int \left\langle \mathrm{Diag}(\sigma)^2 \nabla f, \nabla f\right\rangle \, dM_\sigma(x).\]
Since $f$ is odd and $M_\sigma$ is an even measure, we obtain that
$\int f^2 dM_\sigma \le \int (\sum \sigma_i^2 (\partial_i f)^2) \, dM_\sigma$. Observe that in this formulation, the normalizing constant $Z_\sigma$ appears on both sides and therefore cancels.  This leads to
\begin{align*}
 \mathrm{Var}_{\mu_\rho}(f)&
\le  \int_{(\R_+^*)^n} \left( \int_{\mathbb R^n} \Big( \sum_i \sigma_i^2 \big(\partial_i f(x)\big)^2\Big) \rho(x) e^{-\frac12 \sum_j \frac{x_j^2}{\sigma_j^2}} \frac{dx}{(2\pi)^{n/2}\prod_j \sigma_j}\right) \prod_{j=1}^n dm_j(\sigma_j)\\
&=\sum_{i=1}^n \int_{\mathbb R^n}   \big(\partial_i f(x)\big)^2 \left(\int_{(\R_+^*)^n}  \sigma_i^2 \prod_{j=1}^n \left(e^{-\frac{x_j^2}{2\sigma_j^2}} \,\frac{ dm_j(\sigma_j)}{\sigma_j\sqrt{2\pi}} \right) \right) \rho(x)\, dx \\
&=\sum_{i=1}^n \int_{\mathbb R^n}   \big(\partial_i f(x)\big)^2 \left(\int_{(\R_+^*)^n}  \sigma_i  e^{-\frac{x_i^2}{2\sigma_i^2}} \,\frac{ dm_i(\sigma_i)}{\sqrt{2\pi}}  \right) \prod_{j\neq i}\varphi_j(x_j)\, \rho(x)\, dx \\
&=\sum_{i=1}^n \int_{\mathbb R^n}   \big(\partial_i f(x)\big)^2 \alpha_i(x_i) \,  d\mu^{n,\rho}(x).
\end{align*}
It remains to check the validity of the second expression of $\alpha_i$. This is obvious from the definition of $\varphi_i$ after interchanging integrals as follows:
\[ \int_{|t|}^{+\infty} u\varphi_i(u) du = \int_0^{+\infty} \left(\int_{|t|}^{+\infty}  u e^{-\frac{u^2}{2\sigma^2}} du\right) \frac{dm_i(\sigma)}{\sigma\sqrt{2\pi}} = \int_0^{+\infty}   \sigma^2  e^{-\frac{t^2}{2\sigma^2}}  \frac{1}{\sigma\sqrt{2\pi}} \, dm_i(\sigma) .\]
\end{proof}

\begin{lemma}\label{lem:bound-coef}
Let $\varphi:\mathbb R\to \mathbb R^+$ be an even and log-concave function such that $\int \varphi=1$. Then for all $t\in \mathbb R$,
\[  \frac{\int_{|t|}^{+\infty} u\varphi(u) du}{\varphi(t)}\le \frac{|t|}{2\varphi(0)}+\frac{1}{4\varphi(0)^2}\cdot\]
There is equality when for some $\lambda >0$ and for all $u$, $\varphi(u)=\lambda \exp(-\lambda|u|)/2$.
\end{lemma}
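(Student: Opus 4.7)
The plan is to exploit two consequences of the log-concavity of $\varphi$: that the tail function
\[ G(t) := \int_t^{\infty} \varphi(u)\, du, \qquad t \in \R, \]
is log-concave, and that the hazard rate $t \mapsto \varphi(t)/G(t)$ is non-decreasing on $[0,\infty)$. Both are classical and follow in one line from $-(\log \varphi)'$ being non-decreasing (they are in fact equivalent restatements of one another).

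By evenness I may assume $t \ge 0$. Decomposing $u = t + (u-t)$ and applying Fubini gives the identity
\[ \int_t^{\infty} u\, \varphi(u)\, du \;=\; t\, G(t) \;+\; \int_t^{\infty} G(s)\, ds. \]
Next, since $-(\log G)'(t) = \varphi(t)/G(t)$, log-concavity of $G$ yields the tangent-line upper bound $G(s) \le G(t)\, \exp\bigl(-(s-t)\,\varphi(t)/G(t)\bigr)$ for all $s \ge t$, and integrating from $t$ to $\infty$ produces
\[ \int_t^{\infty} G(s)\, ds \;\le\; \frac{G(t)^2}{\varphi(t)}. \]
Setting $R(t) := G(t)/\varphi(t)$, these two displays combine into
\[ \frac{\int_t^{\infty} u\,\varphi(u)\, du}{\varphi(t)} \;\le\; t\, R(t) + R(t)^2. \]

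Since the hazard rate is non-decreasing, its reciprocal $R$ is non-increasing on $[0,\infty)$; together with $G(0) = 1/2$, which comes from evenness and $\int \varphi = 1$, this gives
\[ R(t) \;\le\; R(0) \;=\; \frac{1}{2\varphi(0)}, \]
and substituting into the previous display yields the claimed inequality. Equality throughout is immediate for the symmetric Laplace density: with $\varphi(u) = (\lambda/2)\, e^{-\lambda |u|}$ the tail $G$ is exactly exponential on $[0,\infty)$, so the tangent-line inequality becomes an equality, and $R \equiv 1/\lambda = 1/(2\varphi(0))$ is constant, so both bounds used above are attained.

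I do not anticipate a serious obstacle: the two monotonicity facts for log-concave densities are standard, and the only care needed is at the boundary of the support of $\varphi$ should it be compactly supported, where the inequality should be interpreted on the interior and extended by continuity.
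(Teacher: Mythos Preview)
Your proof is correct and follows the same overall architecture as the paper's: both arrive at the bound
\[
\frac{\int_t^{\infty} u\varphi(u)\,du}{\varphi(t)} \le t\,R(t) + R(t)^2, \qquad R(t)=\frac{G(t)}{\varphi(t)},
\]
and then use the log-concavity of the tail $G$ to conclude $R(t)\le R(0)=1/(2\varphi(0))$. The difference lies in how the second term is controlled. The paper invokes the Berwald--Borell inequality for the shifted log-concave function $\psi(v)=\varphi(t+v)$, comparing its zeroth and first moments to get $\int_0^\infty v\psi(v)\,dv \le (\int_0^\infty \psi)^2/\psi(0)$. You instead integrate the tail once more, use the tangent-line inequality for the log-concave function $G$, and reach the same bound $\int_t^\infty G(s)\,ds \le G(t)^2/\varphi(t)$ directly. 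Your route is more elementary and self-contained, avoiding the appeal to Berwald--Borell; the paper's route has the advantage of fitting into a general hierarchy of moment inequalities for log-concave functions (and would extend more readily to higher moments if needed).
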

\begin{proof}
It is enough to deal with all $t\ge 0$. For such a fixed $t$, set  for all $v>0$, $\psi(v):=\varphi(t+v)$. Then changing variables by $ u=t+v$
\[\frac{\int_{t}^{+\infty} u\varphi(u) du}{\varphi(t)} = \frac{\int_0^{+\infty} (t+v)\psi(v) dv}{\psi(0)} = t\frac{\int_0^{+\infty}\psi}{\psi(0)}+\frac{\int_0^{+\infty} v\psi(v)dv}{\psi(0)}.\]
Since $\psi$ is log-concave, the Berwald-Borell inequality implies that the function
\[p>0 \mapsto G(p):=\left(\frac{1}{\psi(0)\Gamma(p)} \int_0^{+\infty} \psi(u) u^{p-1}du \right)^{\frac1p}\]
is non-increasing (see \cite{milman-pajor} or e.g. Theorem 2.2.3 in \cite{BOOKgreek}). The inequality $G(1)\ge G(2)$ allows us to deduce that
\[\frac{\int_{t}^{+\infty} u\varphi(u) du}{\varphi(t)} \le  t\frac{\int_0^{+\infty}\psi}{\psi(0)}+\left( \frac{\int_0^{+\infty}\psi}{\psi(0)}\right)^2.\]
With our notation
\[ \frac{\psi(0)}{\int_0^{+\infty}\psi}=\frac{\varphi(t)}{\int_t^{+\infty} \varphi}=-\frac{d}{dt}\log \left(\int_{t}^{+\infty} \varphi\right).\]
Since $\varphi$ is log-concave, the Pr\'ekopa-Leindler inequality ensures
that the tail function $t\mapsto \log \Big(\int_t^{+\infty}\varphi\Big)$ is concave, and thus has a non-increasing derivative. It follows that for $t>0$,
\[\frac{\int_0^{+\infty}\psi}{\psi(0)} =\frac{\int_t^{+\infty} \varphi}{\varphi(t)} \le \frac{\int_0^{+\infty} \varphi}{\varphi(0)}=\frac{1}{2\varphi(0)} \cdot \]
This leads to the claimed inequality. The case of equality is checked by direct calculations.
\end{proof}
\begin{rem}
Better estimates depending on $\varphi$ are easily established. If the even probability density is given by $\varphi=e^{-V}$ where $V$ is differentiable, even and  convex, then for $t>0$,
\[  \frac{d}{dt}\left( -\frac{te^{-V(t)}}{V'(t)}\right) = te^{-V(t)}\left( 1+\frac{V"(t)}{V'(t)}-\frac{1}{tV'(t)}\right) \ge  te^{-V(t)}\left( 1-\frac{1}{tV'(t)}\right). \]
Integrating, we obtain that for $t > 0$ such that $t V'(t) \geq 2$, 
\begin{equation}  \int_t^{+\infty} u \varphi(u)\, du \le 2  \frac{t}{V'(t)} \varphi(t), 
\label{eq_star} \end{equation}
since in this case also $s V'(s) \geq 2$ for all $s \geq t$.
\end{rem}

\begin{theo}\label{th:GM-poincare}
For $i=1,\ldots,n$, let $\mu_i(dt) =\varphi_i(t)\, dt$  be a Gaussian mixture on $\mathbb R$ which is  log-concave. Let $\rho:\mathbb R^n\to \mathbb R^+$ be an even log-concave function such that $d\mu^{n,\rho}(x)=\rho(x) \prod_{i=1}^n d\mu_i(x_i)$ is a probability measure on $\mathbb R^n$.
Then
\[ C_P(\mu^{n,\rho}) \le (1+C \log  n) \, C_P(\mu^{n,1}) =   (1+C \log  n) \max_{i} C_P(\mu_i) ,\]
where $C$ is a universal constant.
\end{theo}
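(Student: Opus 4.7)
The plan combines the weighted Poincar\'e bound from Lemma \ref{lem:poincCM} with a truncation argument and E.~Milman's stability Theorem \ref{th:total-variation}. Set $K := \max_i C_P(\mu_i) = C_P(\mu^{n,1})$. Since $\mu^{n,\rho}$ is even and log-concave, Corollary \ref{cor:even-sym} reduces the problem to bounding the Poincar\'e ratio on odd locally Lipschitz functions. For any such $f$, Lemma \ref{lem:poincCM} gives
$$ \mathrm{Var}_{\mu^{n,\rho}}(f) \le \sum_i \int \alpha_i(x_i)(\partial_i f)^2 \, d\mu^{n,\rho}, $$
and Lemma \ref{lem:bound-coef} bounds $\alpha_i(t) \le |t|/(2\varphi_i(0)) + 1/(4\varphi_i(0)^2)$. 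Using the classical fact that an even log-concave probability density satisfies $\varphi(0)\gtrsim 1/\sqrt{\mathrm{Var}}$, together with $\mathrm{Var}(\mu_i)\le C_P(\mu_i)\le K$, I obtain $1/\varphi_i(0) \le C\sqrt K$.

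To control the linear-in-$|t|$ part of $\alpha_i$, I truncate to the cube $T := [-R,R]^n$ with $R := C_0\sqrt K\log n$ for a large absolute constant $C_0$, and consider the renormalized measure $\tilde\mu$ obtained by multiplying the density of $\mu^{n,\rho}$ by $\chi_T$. Since $\chi_T$ is itself even and log-concave, $\tilde\mu$ is again of the form handled by Lemma \ref{lem:poincCM} (with perturbation $\rho\chi_T/Z$) and is an even log-concave probability measure. To estimate the cost of truncation, note that the marginal of $\mu^{n,\rho}$ in the $i$-th coordinate is even and log-concave by Pr\'ekopa--Leindler, with variance at most $\mathrm{Var}(\mu_i)\le K$ by \eqref{eq:bound-coordinate-variance}; hence this marginal has an exponential tail at scale $\sqrt K$, and a union bound gives $\mu^{n,\rho}(T^c) \le Cn\,e^{-cR/\sqrt K}$, which is at most $1/2$ once $C_0$ is large enough. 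On the support of $\tilde\mu$ one has $|x_i|\le R$, so Lemma \ref{lem:bound-coef} yields $\alpha_i(x_i) \le R/(2\varphi_i(0)) + 1/(4\varphi_i(0)^2) \le CK(1+\log n)$, whence the odd-function Poincar\'e constant of $\tilde\mu$ is at most $CK(1+\log n)$. Corollary \ref{cor:even-sym} promotes this to $C_P(\tilde\mu)\le CK(1+\log n)$, and Theorem \ref{th:total-variation}, applied with $d_{TV}(\mu^{n,\rho},\tilde\mu) = \mu^{n,\rho}(T^c)\le 1/2$ (both measures being log-concave), transfers the estimate to $\mu^{n,\rho}$.

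The main obstacle is the linear-in-$|t|$ growth of $\alpha_i$, which Lemma \ref{lem:bound-coef} shows to be tight (the Laplace distribution saturates it). Some form of localization is therefore unavoidable, and the truncation radius $R = \Theta(\sqrt K\log n)$ is the unique choice that simultaneously keeps $d_{TV}(\mu^{n,\rho},\tilde\mu)$ bounded below $1$ (so that Milman's theorem only costs a universal factor) and caps the linear weight by $R/\varphi_i(0)\le CK\log n$. This balance is precisely the source of the $(1+C\log n)$ loss in the statement; removing it would demand a genuinely different, non-local argument.
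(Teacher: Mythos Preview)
Your proof is correct and follows essentially the same strategy as the paper: apply Lemma~\ref{lem:poincCM} and Lemma~\ref{lem:bound-coef} to a truncated version of $\mu^{n,\rho}$, use Corollary~\ref{cor:even-sym} to pass from odd functions to all functions, and invoke E.~Milman's Theorem~\ref{th:total-variation} to undo the truncation.

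The only noteworthy difference is in how you control $\mu^{n,\rho}(T^c)$. The paper truncates each coordinate at its own scale, $|x_i|\varphi_i(0)\le c\log(2n)$, and bounds the expectation of $\max_i|x_i|\varphi_i(0)$ under $\mu^{n,\rho}$ via the convex/log-concave correlation inequality \eqref{eq:convex-logconcave} (a Gaussian-mixture-specific tool), reducing to the product case $\mu^{n,1}$. You instead use a uniform cube $[-R,R]^n$ and bound the tail of each coordinate directly through the one-dimensional marginals of $\mu^{n,\rho}$, invoking \eqref{eq:bound-coordinate-variance} (which only needs even log-concavity, not the mixture structure) and Borell's exponential tail for log-concave marginals. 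Both routes give the same $\log n$ loss; yours is arguably more elementary at this step, while the paper's coordinate-adapted box is a bit sharper when the $C_P(\mu_i)$ differ widely. One small omission: you should treat $n=1$ separately (your $R=C_0\sqrt K\log 1=0$ degenerates); the paper dispatches it via Proposition~\ref{prop:unimodal-perturbation}.
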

\begin{proof}
The case $n=1$ is a direct application of Proposition \ref{prop:unimodal-perturbation}. Next we focus on  $n\ge2$.
We follow the truncation strategy from \cite{klartag-unconditional}.
Let $X_i$ be a random variable of law $\mu_i$. Since the latter is symmetric and log-concave,
classical results due to Borell and Hensley (see \cite{milman-pajor} or
Chapter 2 in \cite{BOOKgreek}) give
\[ \| X_1\|_{\psi_1}\le c \| X_i\|_2 \le \frac{c}{\sqrt 2 \varphi_i(0)},\]
where the Orlicz norm involves $\psi_1(t)=e^{|t|}-1$ and $c > 0$ is explicit and universal. Choose $\varepsilon:= \sqrt{2}/c$. The later inequality implies $\mathbb E \exp\big(\varepsilon \varphi_i(0) |X_i|\big)\le 2$.

\medskip
By the correlation inequality \eqref{eq:convex-logconcave}, and then Jensen's inequality
\begin{align*}
&\exp\left( \varepsilon \int \max_i \big(|x_i|\varphi_i(0)\big) \, d\mu^{n,\rho}(x) \right)\le
 \exp\left( \varepsilon \int \max_i \big(|x_i|\varphi_i(0)\big) \, d\mu^{n,1}(x) \right)\\
\le &  \int \exp\left( \varepsilon \max_i \big(|x_i|\varphi_i(0)\big)\right) \, d\mu^{n,1}(x)
\le \int \sum_{i=1}^n \exp\big(\varepsilon |x_i|\varphi_i(0)\big) \, d\mu^{n,1}(x) \\
=& \sum_{i=1}^n\int_{\mathbb R} \exp\big(\varepsilon |x_i|\varphi_i(0)\big) \, d\mu_i(x_i)\le 2n.
\end{align*}
Therefore
\[ \int \max_i \big(|x_i|\varphi_i(0)\big) \, d\mu^{n,\rho}(x)\le c \log(2n).\]
Consequently, the set
\[ A:=\left\{x\in\mathbb R^n; \; \max_i \frac{|x_i|\varphi_i(0)}{2}\le c \log(2n)   \right\},\]
verifies $\mu^{n,\rho}(A)\ge \frac12$, thanks to Markov's inequality. This implies that the probability
measure
\[\mu^{n,\rho}_{|A}:=\frac{\mu^{n,\rho}(\cdot\cap A)}{\mu^{n,\rho}(A)}= \frac{\mathbf 1_A}{\mu^{n,\rho}(A)}\, \rho\cdot (\mu_1\otimes\cdots
\otimes \mu_n)\]
 obtained by conditioning $\mu^{n,\rho}$ to the set $A$
is close to $\mu^{n,\rho}$ in total variation distance: $$ d_{\mathrm{TV}}(\mu^{n,\rho},\mu^{n,\rho}_{|A} )\le \frac12. $$
Since $A$ is convex and symmetric, we can write $\mu^{n,\rho}_{|A}=\mu^{n,\tilde{\rho}}$ where $\tilde \rho:= \frac{\mathbf 1_A}{\mu^{n,\rho}(A)}\, \rho$ is still log-concave and even.
Since both measures are log-concave, Theorem \ref{th:total-variation} ensures that for some universal constant $\kappa$
\begin{equation}\label{eq:poinc-mu-mutilde}
 C_P(\mu^{n,\rho}) \le \kappa\,  C_P(\mu^{n,\tilde{\rho}}).\end{equation}
We can apply Lemma \ref{lem:poincCM} to $\mu^{n,\tilde{\rho}}$ with the advantage
that this measure is supported on $A$. We obtain, using also Lemma \ref{lem:bound-coef}, that for every odd and locally Lipschitz function $f$,
\begin{align*}
\mathrm{Var}_{\mu^{n,\tilde{\rho}}}(f) & \le \int \sum_i \left( \frac{|x_i|}{2\varphi_i(0)}+\frac{1}{4 \varphi_i(0)^2}\right) (\partial_i f(x))^2 d\mu^{n,\tilde{\rho}}(x)  \\
&\le \max_i \frac{1}{\varphi_i(0)^2}  \int \sum_i \left(|x_i|\frac{\varphi_i(0)}{2}+\frac{1}{4}\right) (\partial_i f(x))^2 d\mu^{n,\tilde{\rho}}(x)  \\
&\le \max_i \frac{1}{\varphi_i(0)^2}  \int_A \left(\max_i \Big(|x_i| \frac{\varphi_i(0)}{2}\Big)+\frac{1}{4}\right) |\nabla f(x)|^2 d\mu^{n,\tilde{\rho}}(x) \\
& \le \max_i \frac{1}{\varphi_i(0)^2}  \left( \frac14+c\log(2n)\right)  \int  |\nabla f|^2 d\mu^{n,\tilde{\rho}}.
\end{align*}
Since $\mu^{n,\tilde{\rho}}$ is log-concave and even, Corollary~\ref{cor:even-sym} ensures that checking the Poincar\'e inequality for odd functions, as we just did, is enough
to conclude that
\[ C_P(\mu^{n,\tilde{\rho}})\le \max_i \frac{1}{\varphi_i(0)^2}  \left( \frac14+c\log(2n)\right). \]
Combining this estimate with \eqref{eq:poinc-mu-mutilde} gives a universal constant $C$ such that
\[ C_P(\mu^{n,\rho}) \le C \log(n) \max_i \frac{1}{\varphi_i(0)^2} \cdot\]
Eventually, for the even log-concave probability measures $\mu_i(dt)=\varphi_i(t)\,dt$ on the real line it is known that $\frac{1}{12} \varphi_i(0)^{-2}\le C_P(\mu_i)\le \varphi_i(0)^{-2}$, see \cite{bobkov}.
\end{proof}

\subsubsection{Examples}
As explained in \cite{ENT-mixtures}, for $p\in (0,2]$ the probability measures on $\mathbb R$ defined by $$ d\nu_p(t)=\exp(-|t|^p)\, dt/Z_p $$ are
Gaussian mixtures. When $p\in [1,2]$ they are in addition log-concave, and Theorem \ref{th:GM-poincare} ensures that for every even log-concave perturbation $\rho$,
 \begin{equation}
 \label{eq:bound-nu-p}  C_P(\nu_p^{n,\rho})\le (1+C \log n)  \, C_p(\nu_p).
 \end{equation}
 for some universal constant $C$. We point out that $\inf_{p\in [1,2]}C_p(\nu_p)>0$ and $\sup_{p\in [1,2]}C_p(\nu_p)<+\infty$, which is easily verified e.g. with the Muckenhoupt criterion \cite{M_P}. This completes the proof of Theorem \ref{th:Exp-poincare} in the case $p=1$.

 \medskip When $p=1$, Theorem \ref{th:Exp-poincare} almost answers the motivating question that we mentioned in the introduction: we unfortunately have a weak dependence in the dimension, but we allow more general perturbations.

\medskip
 When $1<p<2$, using the remark after Lemma~\ref{lem:bound-coef}, we obtain from (\ref{eq_star}) that for the measure $\nu_p$, the coefficients $\alpha_i(t)$ of Lemma~\ref{lem:poincCM} verify
 \[\alpha_i(t)\le c \big(1+|t|^{2-p}\big), \quad t\in \mathbb R,\]
 where $c$ is a universal constant. This improves on Lemma~\ref{lem:bound-coef}, and can be used in the argument of the proof of Theorem~\ref{th:GM-poincare}. Since there exists a universal $\varepsilon>0$ such that for all $p\in (1,2)$, $$ \int \exp\big ( \varepsilon(|t|^{2-p})^{p/(2-p)}\big) d\nu_p(t)\le 2 $$ we arrive by the same method at
 \[ C_P(\nu_p^{n,\rho})\le \big(1+C (\log n)^{\frac{2-p}{p}}\big) \,  C_p(\nu_p).\]
As $\sup_{p\in [1,2]}C_p(\nu_p)<+\infty$, we have proven the following:
\begin{theo}\label{th:GM-poincare2}
Let $1 \leq p \leq 2$. Let $\rho:\mathbb R^n\to \mathbb R^+$ be an even log-concave function such that $d\nu_p^{n,\rho}(x)=\rho(x) \prod_{i=1}^n d\nu_p(x_i)$ is a probability measure on $\mathbb R^n$.
Then
\begin{equation} 
C_P(\nu_p^{n,\rho}) \le (1+C \log  n)^{\frac{2-p}{p}}, \label{eq_1129} \end{equation}
where $C$ is a universal constant.
\end{theo}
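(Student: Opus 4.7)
The plan is to execute the proof of Theorem~\ref{th:GM-poincare} with two refinements tailored to $\nu_p$ for $p>1$: a sharper pointwise bound on the coefficients $\alpha_i(t)$ of Lemma~\ref{lem:poincCM}, and a correspondingly smaller truncation set. The endpoints come for free: $p=2$ gives $(2-p)/p=0$, so the claim reduces to a universal bound on the Poincar\'e constant of a log-concave perturbation of a Gaussian, which is Corollary~\ref{cor:BL}; and $p=1$ is the symmetric exponential, already covered by Theorem~\ref{th:GM-poincare}. So the substance is in $1<p<2$.

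For the first refinement, I would plug $V(t)=|t|^p$ into the remark right after Lemma~\ref{lem:bound-coef}: since $tV'(t)=p|t|^p$ exceeds $2$ outside a compact set (uniformly in $p\in[1,2]$), inequality (\ref{eq_star}) yields $\alpha_i(t)\le c_0(1+|t|^{2-p})$ with a universal $c_0$, after handling small $|t|$ by the boundedness of $\alpha_i$ on a compact interval. For the second refinement, I would use the uniform sub-exponential tail $\int e^{\varepsilon|t|^p}\,d\nu_p(t)\le 2$, valid for some universal $\varepsilon>0$ and every $p\in[1,2]$. Applying the correlation inequality~\eqref{eq:convex-logconcave} to the even convex function $x\mapsto \max_i e^{\varepsilon|x_i|^p}$ against the even log-concave $\rho$, followed by a union bound, gives
\[ \int \exp\!\bigl(\varepsilon \max_i |x_i|^p\bigr)\, d\nu_p^{n,\rho}(x) \le \sum_{i=1}^n \int e^{\varepsilon|x_i|^p}\, d\nu_p(x_i) \le 2n, \]
so by Markov the symmetric convex set $A:=\{x:\max_i |x_i|^p\le c_1\log n\}$ has $\nu_p^{n,\rho}$-mass at least $1/2$, and on $A$ one has $\max_i |x_i|^{2-p}\le c_2(\log n)^{(2-p)/p}$.

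To finish, set $\tilde\rho := \mathbf{1}_A\rho / \nu_p^{n,\rho}(A)$, which is again even and log-concave. Theorem~\ref{th:total-variation} implies $C_P(\nu_p^{n,\rho})\le \kappa\, C_P(\nu_p^{n,\tilde\rho})$. Applying Lemma~\ref{lem:poincCM} to $\nu_p^{n,\tilde\rho}$ for any odd locally Lipschitz $f$, then using the bound on $\alpha_i$ and the support restriction to $A$, gives
\[ \mathrm{Var}_{\nu_p^{n,\tilde\rho}}(f) \le \int \sum_i \alpha_i(x_i)(\partial_i f)^2\, d\nu_p^{n,\tilde\rho} \le c_3\bigl(1+(\log n)^{(2-p)/p}\bigr)\int |\nabla f|^2\, d\nu_p^{n,\tilde\rho}. \]
Corollary~\ref{cor:even-sym} removes the oddness constraint (since $\nu_p^{n,\tilde\rho}$ is even and log-concave), yielding $C_P(\nu_p^{n,\rho})\le C(1+(\log n)^{(2-p)/p})$. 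This is equivalent to $(1+C'\log n)^{(2-p)/p}$ up to universal constants, since $(1+x)^\alpha\asymp 1+x^\alpha$ for $x\ge 0$ and $\alpha\in[0,1]$.

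The main obstacle I foresee is the uniformity of all the constants ($c_0$, $\varepsilon$, $\kappa$, etc.) as $p$ ranges over $[1,2]$: the $\alpha_i$ bound from (\ref{eq_star}) and the sub-exponential moment of $\nu_p$ both must be controlled near both endpoints. These reduce to elementary computations on the explicit densities $e^{-|t|^p}/Z_p$, but they do deserve a careful check in order to assert that the final constant $C$ is genuinely universal.
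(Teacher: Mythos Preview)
Your proposal is correct and follows essentially the same route as the paper: refine the $\alpha_i$ bound via \eqref{eq_star} to $\alpha_i(t)\le c(1+|t|^{2-p})$, use the uniform $\psi_p$-moment of $\nu_p$ together with the correlation inequality to truncate to a box where $\max_i|x_i|^{2-p}\lesssim(\log n)^{(2-p)/p}$, then rerun the argument of Theorem~\ref{th:GM-poincare}. The only cosmetic difference is that you apply the correlation inequality directly to the convex function $x\mapsto\max_i e^{\varepsilon|x_i|^p}$, whereas the paper applies it to $x\mapsto\max_i|x_i|\varphi_i(0)$ and then uses Jensen; both yield the same truncation level.
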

Theorem \ref{th:GM-poincare2} implies Theorem \ref{th:Exp-poincare}.
Note that the bound (\ref{eq_1129}) improves on \eqref{eq:bound-nu-p}, and is independent of the dimension for $p=2$ (as expected for log-concave perturbations of the standard Gaussian measure).

\medskip
All the above results deal with even log-concave perturbations of the measures $\nu_p$ and their products $\nu_p^n$, $p\in [1,2]$. The spectral gap of such perturbed measures is controlled uniformly in the perturbation (for any given dimension).  When $p\in[1,2)$ this is not true  for  arbitrary log-concave perturbations (i.e. non necessarily even).  To see this, it is enough to consider the  probability measures $\nu_p$ on $\mathbb R$, and their exponential tilts
\[ d\nu_{p,a}(t)=\frac{1}{Z_{p,a}} e^{-|t|^p+at} dt,\]
where $a$ in an arbitrary real number if $p>1$, and $a\in (-1,1)$ when $p=1$.  Gentil and Roberto \cite{gentil-roberto} have proved that for $p\in [1,2)$,
\[ \sup_a C_P(\nu_{p,a})=+\infty.\]
For $p=2$, the Brascamp-Lieb inequality ensures that the Poincar\'e constant  of any log-concave perturbations of the standard Gaussian measure is dominated by 1.

\subsection{Light tails}
Since  Gaussian mixtures have heavier tails than the Gaussian measure, we now investigate  some measures with lighter tails.

A special and simple case is when  the measures $d\mu_i(t)=e^{-V_i(t)} dt$ have strictly uniformly convex potentials. More specifically, if there exists $\varepsilon>0$ such that for all $i$ and all $t\in \mathbb R$, $V_i"(t)\ge \varepsilon$, then without assuming any symmetry if $\rho$ is log-concave, the probability measure $\mu^{n,\rho}$ also has a potential which is uniformly strictly convex and therefore
\[ C_P\big(\mu^{n,\rho}\big)\le \frac{1}{\varepsilon}\cdot\]

Nevertheless, strict convexity in the large is not sufficient to yield such uniform results. The behaviour of $\mu_i$ around 0 is important as the next examples show:
let $p>2$ and for all $i$, $d\mu_i(t)=\exp(-|t|^p) dt/Z_p$.
For $x\in \mathbb R^n$, let us denote by $\overline{x}=(\sum_i x_i)/n$ its  empirical mean and $Q(x)=\sum_i(x_i-\overline{x})^2/n$ its empirical variance. As a nonnegative quadratic form, $Q$ is convex. Also note that
\[ Q(x)=n \Big|P_{u_n^\bot} x\Big|^2,\]
where $u_n=(1/\sqrt n, \ldots, 1/\sqrt n)\in \mathbb R^n$ is a unit vector on the main diagonal line and $P_{u_n^\bot} $ is the orthogonal projection onto the othogonal complement of this line
 \[u_n^\bot=\big\{x\in \mathbb R^n; \; \sum_i x_i=0\big\}.\]
Let us define $\rho_k:\mathbb R^n\to \mathbb R^+$ as the indicator function of the convex origin-symmetric set $\{x\in \mathbb R^n; Q(x)\le 1/k\}$,  properly normalized so that $\mu^{n,\rho_k}$ is a probability measure (another possible choice would be $\rho_k=\exp(-kQ)/Z_k$).
Then when $k$ tends to $+\infty$ the measure $\mu_{n,\rho_k}$ tends
to the measure obtained by conditioning $\mu_1\otimes\cdots\mu_n=\mu^{n,1}$ to the diagonal line $\mathbb Ru_n$.
With our choice of $d\mu_i(t)=\exp(-|t|^p) dt/Z_p$, this limiting measure is, after isometric identification of  $\mathbb Ru_n$ and $\mathbb R$,
\[  \exp\Big(-\sum_{i=1}^n\Big|\frac{t}{\sqrt n}\Big|^p\Big) \frac{dt}{Z_{n,p}} =\exp\Big(-\Big|\frac{t}{n^{\frac12-\frac1p}}\Big|^p\Big) \frac{dt}{Z_{n,p}} \cdot \]
This measure is the law of $n^{\frac12-\frac1p}Y$ where $Y$ is distributed according to $\mu_1$. Therefore its variance is $n^{1-\frac{2}{p}} \mathrm{Var}(Y)$ and its Poincar\'e constant is $n^{1-\frac{2}{p}} C_P(\mathbb P_Y)$. For $p>2$ this tends to infinity with the dimension.  This growth of the variance in some directions is related to the counterexample in Remark after Theorem \ref{theo:before-rem-cube}, which in a sense corresponds to $p=+\infty$.
The behaviour is very different if we start from Gaussian mixtures, as explained in Theorem \ref{th:GM-odd-functions}.

\begin{rem}
When the functions $V_i$ are strictly uniformly convex in the large, one can obtain Poincar\'e inequalities for small perturbations thanks to a method developped by  Helffer, see e.g.  \cite{helffer}. His approach can be thought of as a variant of the Brascamp-Lieb inequalities where strict convexity is replaced by uniform spectral gap for restrictions to coordinate lines. More precisely, if $d\mu(x)=e^{-V(x)}dx$, consider for $x\in \mathbb R^n$ and $i\in \{1,\ldots,n\}$, the one dimensional probability measure
 \[ d\mu_{|x+\mathbb R e_i} (t) := \frac{1}{Z_{(x_j)_{j\neq i}}}\exp\big(-V(x_1,\ldots, x_{i-1},t,x_{i+1},\ldots, x_n)\big) \, dt,\]
where $(e_i)_{i=1}^n$ is the canonical basis of $\mathbb R^n$.
Then for each $x\in \mathbb R^n$, define the matrix $K(x)$ by
\[ K(x)_{i,j}:=\begin{cases}
   1/C_P(\mu_{|x+\mathbb R e_i}) & \mathrm{when}\, i=j, \\
   \partial^2_{i,j}V(x) & \mathrm{when}\, i\neq j.
\end{cases}\]
If for all $x$, $K(x)$ is positive definite then for all smooth functions $f$, it holds $\mathrm{Var}_\mu(f)\le \int \langle K^{-1}\nabla f,\nabla f\rangle \, d\mu$. In particular, if for all $x$, $K(x)\ge \varepsilon \mathrm{Id}$ then $C_P(\mu)\le \frac{1}{\varepsilon}$.

In our setting of the measures $\mu^{n,\rho}$, the restrictions to coordinate lines are simple (for notational simplicity we present only what happens for $x+\mathbb R e_1$):
 \[ d(\mu^{n,\rho})_{|x+\mathbb R e_1}(t)=e^{-V_1(t)}\rho(t,x_2,\ldots,x_n) \frac{dt}{Z_{(x_j)_{j\ge 2}}} \cdot\]
 If $V_1=U_1+B_1$, where $U_1$ is strictly uniformly convex ($U_1"(t)\ge 1/c_1>0$) and $B_1$ is bounded, then $(\mu^{n,\rho})_{|x+\mathbb R e_1}$ can be viewed as a bounded perturbation (by $B_1$) of  the strictly uniformly convex measure $e^{-U_1}\rho(\cdot,x_2,\ldots,x_n)/\tilde{Z}$ (this is where the log-concavity of $\rho$ is used. Note that no symmetry assumption is needed). It follows from the Brascamp-Lieb inequality and Proposition \ref{prop:bounded-perturbation}  that for all $x$,
 \[ C_P(\mu_{|x+\mathbb R e_1})\le c_1e^{\mathrm{Osc}(B_1)}.\]
 This type of uniform bound allows to get Poincar\'e inequalities for $\mu^{n,\rho}$ provided the non-diagonal terms of the Hessian of $-\log \rho$ are small enough, thanks to Helffer's result. This is especially simple to achieve when $\rho=e^{-Q}$ where $Q$ is a small quadratic form. We refer to Theorem 4.1 in \cite{gentil-roberto} for weaker hypotheses on $B_1$ allowing  similar results.
\end{rem}

\section{Application to convex sets}
Given a non-empty compact convex set $K\subset \mathbb R^d$, we denote by $\lambda_K$ the uniform probability measure
on $K$ (which we may consider in the natural dimension of the affine span of $K$). Also let $B_p^N:=\{x\in \mathbb R^N; \, \|x\|_p\le 1\}$ be the unit ball of $\ell_p^N$.
Recall from Section 2.2 that $C_P(\mu,``linear")=\|\mathrm{Cov}(\mu)\|_{op}$ denotes the smallest constant so that the Poincar\'e inequality is satisfied for all linear functions
with respect to the measure $\mu$.

\begin{theo}\label{theo:ball-section}
Let $n\ge d \ge 2$ and $p\in [1,2]$. Let $E$ be any linear subspace of $\mathbb R^n$ of dimension $d$, then
\[ C_P\big(\lambda_{B_p^n\cap E}\big) \le c  \left( \frac{n}{d}\right)^{\frac{2}{p}-1}\log(n)^{2/p} C_P\big(\lambda_{B_p^n\cap E},``linear"\big),\]
where $c$ is a universal constant.  In particular, if $d\ge n/2$ then for some universal constant $c'$, $C_P\big(\lambda_{B_p^n\cap E}\big) \le c'  \log(d)^2 C_P\big(\lambda_{B_p^n\cap E},``linear"\big)$.
\end{theo}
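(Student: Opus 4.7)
My plan is to reduce the problem to bounding the Poincar\'e constant of an auxiliary log-concave measure on $E$, then use the Kolesnikov--Milman level-set comparison to descend to the uniform measure on $K := B_p^n \cap E$. Let $\mu_E$ be the probability measure on $E$ with density proportional to $e^{-\|x\|_p^p}$, where $\|\cdot\|_p$ is the ambient $\ell_p^n$-norm restricted to $E$. Since $p\ge 1$, this measure is even and log-concave on $E$, and its super-level sets are exactly the dilates $rK$ ($r > 0$).

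The first substantive step is to show $C_P(\mu_E) \le C(\log n)^{(2-p)/p}$. The measure $\mu_E$ is not itself of the form $\nu_p^{n,\rho}$ on $\RR^n$, so I would approximate it by localizing a perturbation of $\nu_p^n$ near $E$: for large $\lambda > 0$, let $\mu_\lambda$ be the probability measure on $\RR^n$ with density proportional to $e^{-\|x\|_p^p - \lambda |P_{E^\perp} x|^2/2}$. The factor $\rho_\lambda(x) := e^{-\lambda|P_{E^\perp}x|^2/2}$ is even and log-concave, so Theorem~\ref{th:GM-poincare2} yields $C_P(\mu_\lambda) \le (1 + C\log n)^{(2-p)/p}$ uniformly in $\lambda$. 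Testing this inequality on functions of the form $f(x) = g(P_E x)$ for locally Lipschitz $g : E \to \RR$ (for which $|\nabla f(x)|^2 = |\nabla_E g(P_E x)|^2$), and observing that the pushforward of $\mu_\lambda$ under $P_E$ converges weakly, with moment control, to $\mu_E$ as $\lambda \to \infty$, transfers the Poincar\'e inequality to $\mu_E$ with the same constant.

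Next, I would apply the Kolesnikov--Milman comparison between a log-concave measure and the uniform measure on a super-level set of its density of sufficient mass. Standard Borell-type concentration for $\|x\|_p^p$ under $\mu_E$ shows that the typical value of $\|x\|_p^p$ is of order $d$, so the typical super-level set is comparable to $d^{1/p}K$. Thus $C_P(\lambda_{d^{1/p}K}) \lesssim C_P(\mu_E)$, which by homogeneity rewrites as $d^{2/p}C_P(\lambda_K) \lesssim (\log n)^{(2-p)/p}$. The same KM comparison, applied to linear functions, yields $\|\mathrm{Cov}(\mu_E)\|_{op} \asymp d^{2/p}\|\mathrm{Cov}(\lambda_K)\|_{op}$, so the KLS-ratio is preserved: $C_P(\lambda_K)/\|\mathrm{Cov}(\lambda_K)\|_{op}$ is controlled, up to a universal constant, by $C_P(\mu_E)/\|\mathrm{Cov}(\mu_E)\|_{op} \le (\log n)^{(2-p)/p}/\|\mathrm{Cov}(\mu_E)\|_{op}$.

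The final step, and the main obstacle of the proof, is to obtain a lower bound on $\|\mathrm{Cov}(\mu_E)\|_{op}$ of order $(d/n)^{2/p-1}/\log n$: this is precisely what produces the dimensional factor $(n/d)^{2/p-1}$ in the statement. A natural route is to use the ambient geometry of $B_p^n$ together with its sections: the second moment $\int_{B_p^n}|x|^2\,d\lambda_{B_p^n}$ is of order $n^{1-2/p}$, and restricting to a $d$-dimensional section (with appropriate renormalization via the scaling linking $\mu_E$ to $\lambda_K$) should yield the required lower bound. Controlling the operator norm rather than just the trace of the covariance, uniformly over all $d$-dimensional subspaces $E$, is the delicate point, since low-dimensional sections of $\ell_p^n$-balls can have genuinely small linear spread --- this is what the factor $(n/d)^{2/p-1}$ encodes. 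Combining this covariance lower bound with the preceding KM comparison yields the claimed inequality.
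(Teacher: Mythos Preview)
Your overall architecture matches the paper's: bound $C_P$ of the marginal on $E$ of $\nu_p^n$ via Theorem~\ref{th:GM-poincare2} and an approximation/limiting argument, then transfer to $\lambda_{B_p^n\cap E}$ via the Kolesnikov--Milman level-set theorem. The approximation step is fine (the paper uses indicators of $\varepsilon$-neighborhoods of $E$ rather than Gaussian penalties, but either works).

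The genuine gap is in how you extract the factor $(n/d)^{2/p-1}$. Your claim that ``the typical super-level set is comparable to $d^{1/p}K$'' is incorrect as stated: the Kolesnikov--Milman theorem requires the potential to satisfy $\min V=0$, i.e.\ the density to have maximum $1$, and your $\mu_E$ has maximum density $1/Z_E$ where $Z_E=\int_E e^{-\|x\|_p^p}\,d^Ex$. The necessary rescaling is by $\lambda_E=Z_E^{-1/d}$, not by $d^{1/p}$, and it is precisely $\lambda_E^2=Z_E^{-2/d}$ that carries the dimensional factor. Your subsequent plan --- compare KLS ratios and then lower-bound $\|\mathrm{Cov}(\mu_E)\|_{op}$ by $(d/n)^{2/p-1}/\log n$ --- is circuitous and, more importantly, left unproved: you call it ``the main obstacle'' and only gesture at it. In fact the covariance lower bound you want is \emph{equivalent}, up to a standard Hensley-type comparison, to the bound $Z_E^{-2/d}\lesssim (n/d)^{2/p-1}$, so you have relabeled the difficulty rather than resolved it.

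The paper's route is both simpler and complete. After rescaling so that $\min V=0$, one has $C_P(\mu)=Z_E^{-2/d}\,C_P(\tilde\mu)\le C\,Z_E^{-2/d}(\log n)^{(2-p)/p}$. Then Theorem~\ref{theo:level-set} gives \emph{both} $C_P(\lambda_{K_E})\lesssim C_P(\mu)\log(e+C_P(\mu)\sqrt d)$ \emph{and} $C_P(\lambda_{K_E},``\mathrm{linear}")\ge c$ directly --- no separate covariance comparison for $\mu_E$ is needed. The factor $Z_E^{-2/d}$ is then bounded by an elementary inradius argument: since $\|x\|_p\le n^{1/p-1/2}\|x\|_2$, one has $Z_E\ge\int_E\exp(-n^{1-p/2}\alpha_p^p|x|_2^p)\,d^Ex$, a rotation-invariant integral that evaluates via Gamma functions to give $Z_E^{-2/d}\le c'(n/d)^{2/p-1}$. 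This is the step your proposal is missing.
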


This result will be deduced from the ones of the previous sections, thanks to a result of Kolesnikov and Milman \cite{kolesnikov-milman}, which allows to transfer Poincar\'e inequalities from log-concave measures to some of their level sets. The next statement is a combination of Theorem 2.5 and Proposition 2.3 in \cite{kolesnikov-milman}.

\begin{theo}[\cite{kolesnikov-milman}] \label{theo:level-set}
Let $d\mu(x)=\exp(-V(x))dx$ be a log-concave probability measure on $\mathbb R^d$, with $\min V=0$.
Then there exists $t>0$ such that the set $K:=\{x\in \mathbb R^d; V(x)\le t\}$ verifies
\begin{enumerate}
\item $C_P(\lambda_K) \le C \cdot C_P(\mu) \cdot \log\big(e+C_P(\mu)\sqrt{d}\big)$,
\item $C_P(\lambda_K,``linear")\ge c>0$,
\end{enumerate}
where $C,c$ are universal constants.
\end{theo}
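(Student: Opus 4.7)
The plan is to deduce Theorem \ref{theo:ball-section} from the Kolesnikov--Milman transfer principle (Theorem \ref{theo:level-set}) combined with Theorem \ref{th:GM-poincare2}, which controls the Poincar\'e constant of even log-concave perturbations of $\nu_p^n$.

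First I introduce the log-concave probability measure $\mu_E$ on $E\cong\mathbb R^d$ with density proportional to $\exp(-\|y\|_p^p)$, where $\|y\|_p^p=\sum_{i=1}^n|y_i|^p$ is the restriction of the $\ell_p^n$ norm to $E\subset\mathbb R^n$. The potential $V(y)=\|y\|_p^p$ is even and convex with $\min V=0$, and its sublevel sets are $\{V\leq t\}=t^{1/p}(B_p^n\cap E)$. Applying Theorem \ref{theo:level-set} to $\mu_E$ yields some $t>0$ for which $K=t^{1/p}(B_p^n\cap E)$ satisfies
\[ C_P(\lambda_K)\leq C\cdot C_P(\mu_E)\log\bigl(e+C_P(\mu_E)\sqrt d\bigr), \qquad C_P(\lambda_K,``linear")\geq c>0. \]
Since both $C_P$ and $C_P(\cdot,``linear")$ scale as the square of dilations, the ratio $C_P/C_P(\cdot,``linear")$ is scale-invariant, and dividing the two estimates above gives
\[ \frac{C_P(\lambda_{B_p^n\cap E})}{C_P(\lambda_{B_p^n\cap E},``linear")}\leq \frac{C}{c}\,C_P(\mu_E)\log\bigl(e+C_P(\mu_E)\sqrt d\bigr). \]

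Next I bound $C_P(\mu_E)$ by approximation from within the ambient space. For $k>0$ let $\rho_k(x)=\exp(-k|P_{E^\perp}x|^2)$, an even log-concave weight on $\mathbb R^n$, and consider $\nu_p^{n,\rho_k}$ on $\mathbb R^n$. By Theorem \ref{th:GM-poincare2},
\[ C_P(\nu_p^{n,\rho_k})\leq(1+C\log n)^{(2-p)/p} \]
uniformly in $k$. Let $p_k$ denote the push-forward of $\nu_p^{n,\rho_k}$ under the orthogonal projection $P_E$; this is log-concave on $E$, and the inequality $|P_E\nabla f|^2\leq|\nabla f|^2$ ensures $C_P(p_k)\leq C_P(\nu_p^{n,\rho_k})$. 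A Laplace expansion in the transverse variable $z=P_{E^\perp}x$ shows that the density of $p_k$ converges pointwise to $e^{-\|y\|_p^p}/Z_E$ as $k\to\infty$; by Scheff\'e's lemma this upgrades to $L^1$ convergence of densities, so $p_k\to\mu_E$ weakly. Invoking the lower semi-continuity of the Poincar\'e constant under weak convergence of log-concave probability measures (a standard fact, verified by testing against $C^1$ compactly supported functions),
\[ C_P(\mu_E)\leq \liminf_{k\to\infty} C_P(p_k)\leq (1+C\log n)^{(2-p)/p}. \]

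Combining the two steps and using $\sqrt d\leq\sqrt n$ to bound the Kolesnikov--Milman logarithm by $O(\log n)$ yields
\[ \frac{C_P(\lambda_{B_p^n\cap E})}{C_P(\lambda_{B_p^n\cap E},``linear")}\leq C'(\log n)^{(2-p)/p+1}=C'(\log n)^{2/p}, \]
which is stronger than the claimed bound (since $(n/d)^{2/p-1}\geq 1$ for $p\in[1,2]$) and hence implies Theorem \ref{theo:ball-section}. The ``in particular'' form with the $\log(d)^2$ factor then follows since $\log n\leq 2\log d$ when $d\geq n/2$ and $2/p\leq 2$. The main delicate point will be the limit argument in the second step: verifying the pointwise/$L^1$ convergence of the marginal densities via Laplace's method with careful bookkeeping of the normalizing constants, and invoking the lower semi-continuity of $C_P$ within the log-concave class. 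The remainder is an assembly of previously-established results.
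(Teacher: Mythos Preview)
Your proposal is not a proof of Theorem~\ref{theo:level-set} at all. That theorem is quoted from Kolesnikov--Milman and is not proven in the paper; the paper merely cites it. What you have written is an attempted proof of Theorem~\ref{theo:ball-section}, which \emph{uses} Theorem~\ref{theo:level-set} as a black box. So at the level of matching statement to proof, there is a basic mismatch.

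Treating your write-up as a proof of Theorem~\ref{theo:ball-section} instead, the overall architecture is the same as the paper's: restrict $\nu_p^n$ to $E$ by a limiting even log-concave perturbation, invoke Theorem~\ref{th:GM-poincare2} to bound the Poincar\'e constant of the resulting measure on $E$, then feed this into the Kolesnikov--Milman transfer. Your Gaussian weight $\rho_k=\exp(-k|P_{E^\perp}x|^2)$ plays the same role as the paper's indicator of an $\varepsilon$-neighbourhood, and the limit/lower-semicontinuity step is essentially the same argument the paper spells out around (\ref{eq_1011}).

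There is, however, a genuine gap. Theorem~\ref{theo:level-set} requires the probability measure to be written as $e^{-V}\,dx$ with $\min V=0$; equivalently, the density must have maximum equal to $1$. Your $\mu_E$ has density $e^{-\|y\|_p^p}/Z_E$, so its potential is $\|y\|_p^p+\log Z_E$, with minimum $\log Z_E\neq 0$ in general. You cannot simultaneously have $\mu_E$ a probability measure and $V(y)=\|y\|_p^p$. The paper handles this by dilating: with $\lambda_E=Z_E^{-1/d}$ the push-forward of $\mu_E$ under $y\mapsto \lambda_E y$ has density exactly $\exp(-\|\alpha_p x/\lambda_E\|_p^p)$ with the correct normalization, and its Poincar\'e constant is $\lambda_E^2\,C_P(\mu_E)=Z_E^{-2/d}\,C_P(\mu_E)$. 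It is this rescaled quantity that enters the Kolesnikov--Milman bound, and the factor $Z_E^{-2/d}$ is then estimated by $c'(n/d)^{2/p-1}$ via the inradius computation. By skipping the rescaling you have dropped precisely this factor, which is why you obtain a conclusion ``stronger than the claimed bound''. That stronger conclusion is not supported by the argument: once you insert the missing normalization, the $(n/d)^{2/p-1}$ reappears and you recover exactly the paper's statement, not an improvement of it.
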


We shall also need a stability result of the Poincar\'e constant under convergence of measures. For $\vphi: \RR^n \rightarrow \RR$ write $\| \vphi \|_{Lip} = \sup_{x \neq y} |\vphi(x) - \vphi(y)| / |x-y|$ for its Lipschitz seminorm.
According to E. Milman \cite{emanuel}, for any log-concave probability measure $\mu$ on $\RR^n$,
$$ c_1 \sqrt{C_P(\mu)} \leq \sup_{\| \vphi \|_{Lip} \leq 1} \int |\vphi - E_{\mu, \vphi}| d \mu \leq C_2 \sqrt{C_P(\mu)} $$
where $c_1, C_2 > 0$ are universal constants and $E_{\mu, \vphi} = \int \vphi d \mu$.

\begin{proof}[Proof of Theorem~\ref{theo:ball-section}]
For $i=1,\ldots,n$ we set 
 $d\mu_i(t)=d\nu_p(t)=\exp(-|\alpha_p t|^p)dt$, where $\alpha_p=2\Gamma(1+1/p) \in [\sqrt{\pi}, 2]$. These measures are even and log-concave, and their density at 0 is equal to $1$. By Theorem \ref{th:GM-poincare2}, for any even log-concave (and normalized) perturbation $\rho$,
\begin{equation} C_P(\mu^{n,\rho})\le C (\log n)^{\frac{2-p}{p}}
\label{eq_1009} \end{equation} where $C$ is a universal constant. 
Indeed, since the scaling coefficient $\alpha_p$ has the order of magnitude of a universal constant, it may be absorbed in
the universal constant $C$. We apply (\ref{eq_1009}) when $\rho = \rho_{\eps}$ is the normalized indicator of an $\varepsilon$-neighborhood of the subspace $E$. The family of measures $\mu^{n, \rho_{\eps}}$ tends weakly, as $\eps \rightarrow 0$, to the
measure $\tilde\mu$ on the Euclidean space $E$ (that we identify to $\mathbb R^{d}$) with density $\exp(-\|\alpha_p x\|^p_p)/Z_E$, where
\begin{equation}\label{def:ZE}
Z_E = \int_E \prod_{i=1}^n \exp(-|\alpha_p x_i|^p)d^Ex
\end{equation}
 is the integral over $E$ of the density of $(\nu_p)^n$. We claim that
 \begin{equation}
C_P(\tilde{\mu})\le 2 C (\log n)^{\frac{2-p}{p}}. \label{eq_1011}
 \end{equation}
 Indeed, otherwise there exists a smooth $\vphi: E \rightarrow \RR$
 with $Var_{\tilde{\mu}}(\vphi) > 2 C \log(n) \cdot \int |\nabla \vphi|^2 d \tilde{\mu}$. By multiplying $\vphi$ with a slowly-varying cutoff function, we may assume that $\vphi$ is compactly-supported in $E$
 (the argument is standard, see Section \ref{sec_app1} below for details).
 We set $f(x) = \vphi( P_E x)$, where $P_E$ is the orthogonal projection onto $E$ in $\RR^n$. Then as $\eps \rightarrow 0^+$,
 $$ Var_{\mu^{n, \rho_{\eps}}}(f) \longrightarrow Var_{\tilde{\mu}} (\vphi) \quad \text{and} \quad \int |\nabla f|^2 d \mu^{n, \rho_{\eps}}
 \longrightarrow \int |\nabla \vphi|^2 d \tilde{\mu},
 $$
 in contradiction to (\ref{eq_1009}). This completes the proof of (\ref{eq_1011}).
  In order to apply Theorem \ref{theo:level-set}, we need to rescale $\tilde\mu$. Let $Y$ a random vector on $E$ with law $\tilde\mu$, then for $\lambda >0$ the random vector $\lambda Y$ has a distribution of density on $E$ given by
 \[ \exp\left( -\left\| \frac{\alpha_px}{\lambda}\right\|_p^p-\log(Z_E)-d\log(\lambda)\right).\]
This suggests to set  $\lambda_E:= Z_E^{-1/d}$.
For this choice,
the probability measure $\mu(dx)=\exp( -\|\alpha_p x/\lambda_E\|_p^p) d^E_x$ on $E$ verifies
\[ C_P(\mu)=\lambda_E^2 C_P(\tilde{\mu}) \le \lambda_E^2 C (\log n)^{\frac{2-p}{p}} =
C Z_E^{-\frac2d} (\log n)^{\frac{2-p}{p}}.
\]
%
In order to bound the latter quantity from above, we need a lower bound for $Z_E$, as defined in \eqref{def:ZE}. This can be done by general results on sections of isotropic measures. More precise bounds were obtained by Meyer and Pajor \cite{meyer-pajor} in their investigation of  extremal volumes of sections of $B_p^n$ (they observe that $Z_E=\mathrm{Vol}_d(B_p^n\cap E)/ \mathrm{Vol}_d(B_p^d) $). For our purpose, a simple bound based  on the inradius of $B_n^p$ is the most effective:  since $p\le 2$, for any $x\in \mathbb R^n$, $\|x\|_p \le n^{\frac1p-\frac12} \|x\|_2$, thus
\begin{align*}
Z_E & =\int_E \exp\big(-\|\alpha_p x\|_p^p\big) \, d^Ex
\ge \int_E \exp\big(-\| n^{\frac1p-\frac12} \alpha_p x\|_2^p\big)\,  d^Ex.
\end{align*}
The later integral takes the same value for all $d$-dimensional vector spaces $E$. Therefore
\begin{align*}
Z_E & \ge \int_{\mathbb R^d} \exp\big(-\| n^{\frac1p-\frac12} \alpha_p x\|_2^p\big)\,  dx \\
 &= \mathrm{Vol}_d(B_2^d) \int_0^{+\infty}d r^{d-1} \exp\big( -(n^{\frac1p-\frac12} \alpha_p r)^p \big) dr \\
 &= \mathrm{Vol}_d(B_2^d) \frac{\int_0^{+\infty}d s^{d-1} \exp\big( -s^p \big) ds}{\big(n^{\frac1p-\frac12} \alpha_p\big)^d} = \left(\frac{\sqrt\pi}{n^{\frac1p-\frac12} \alpha_p} \right)^d \frac{\Gamma\big(1+\frac{d}{p}\big)}{\Gamma\big(1+\frac{d}{2}\big)}.
 \end{align*}
For $x$ large, $\Gamma(1+x)^{\frac1x}\sim x/e$, we get that for some numerical constants $c,c'$,
\[ Z_E^{-\frac2d}\le c \frac{\alpha_p^2 n^{\frac2p-1}}{\pi} \frac{\frac{d}{2e}}{(\frac{d}{pe})^{\frac{2}{p}}} \le c' \left(\frac{n}{d} \right)^{\frac2p-1}.
\]
This leads to
\[ C_P(\mu) \le
C' \left(\frac{n}{d} \right)^{\frac2p-1} (\log n)^{\frac{2-p}{p}}.
\]
Applying Theorem~\ref{theo:level-set} to $\mu$ provides $t>0$ so that the set $K_E:= \{x\in E; \|\alpha_p x/\lambda_E\|^p_p\le t\}=\alpha_p^{-1}\lambda_E t^{\frac1p} \big(B_p^n\cap E\big)$ verifies
\begin{align*}
C_P(\lambda_{K_E}) &\le C C'\left(\frac{n}{d} \right)^{\frac2p-1} (\log n)^{\frac2p-1} \log\left( e+C'\left(\frac{n}{d} \right)^{\frac2p-1} (\log n)^{\frac2p-1} \sqrt{d}\right)  C_P(\lambda_{K_E},``linear") \\
& \le C'' \left(\frac{n}{d} \right)^{\frac2p-1}\log(n)^{\frac2p} C_P(\lambda_{K_E},``linear").
 \end{align*}
Since the constants $C_P(\cdot)$ and $C_P(\cdot, ``linear)$ are both 2-homogeneous with respect to dilations of the underlying measure, we get the claim
\[ C_P(\lambda_{B_p^n\cap E})   \le C'' \left(\frac{n}{d} \right)^{\frac2p-1}\log(n)^{\frac2p} C_P(\lambda_{B_p^n\cap E},``linear").\]
\end{proof}

Corollary \ref{cor1} of the introduction clearly follows from
Theorem~\ref{theo:ball-section}.

\section{Appendix: approximation results}

\subsection{Density of test functions}
\label{sec_app1}

Let $\mu$ be a log-concave measure on $\RR^n$.
We assume that the support of $\mu$ is not contained in an affine subspace of lower dimension, as otherwise, we may just work in the lower dimensional subspace. Hence $\mu$ is of the form $\rho(x) dx$
where $\rho$ is a log-concave function.  Let $\Omega$ be the interior of the support of $\mu$. It is convex and non-empty (assuming that $\mu$ is not the zero measure). The function $\rho$ is positive on $\Omega$ and vanishes outside of $\Omega$.
Write $C_c^{\infty}(\Omega)$ for the space of smooth functions, compactly-supported in $\Omega$. By definition, $H^1(\Omega, \mu)=H^1(\mu)$ is the set of (equivalence classes of) functions $f$ in $L^2(\mu)$, for which there exist functions $g_i\in L^2(\mu)$ such that for all $1\le i\le n$ and for all $\varphi \in C_c^{\infty}(\Omega)$,
\[ \int_\Omega \partial_i\varphi(x) f(x) \, dx=-\int_\Omega \varphi(x) g_i(x) \, dx.\]
Classically, $g_i$ is called a weak partial derivative of $f$ (viewed as a function on $\Omega$). The weak gradient $(g_i)_i$ is simply denoted by $\nabla f$ and
\begin{equation}   \| f \|_{H^1(\mu)} = \sqrt{  \int_{\RR^n} f^2 d \mu+\int_{\RR^n} |\nabla f|^2 d \mu}. \label{eq_H1} \end{equation}

The following basic result will be useful:
\begin{proposition}\label{prop:density}
Let $\mu$ be a log-concave measure on $\RR^n$. Then the set $C_c^{\infty}(\RR^n)$ is dense in $H^1(\mu)$.
\end{proposition}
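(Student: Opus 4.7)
The plan uses a standard three-step approximation: truncation at infinity, inward dilation toward an interior point of $\Omega$, and mollification. I work throughout with the convention that $f$ is extended by zero outside $\Omega$ when integrating over $\RR^n$.

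First, given $f\in H^1(\mu)$, I multiply by a smooth radial cutoff $\chi_R(x)=\chi(x/R)$, where $\chi\in C_c^\infty(\RR^n)$ equals $1$ on $B(0,1)$ and vanishes outside $B(0,2)$. The product rule for weak gradients gives $\nabla(f\chi_R)=\chi_R\nabla f+f\nabla\chi_R$, and combining the dominated convergence theorem with $|\nabla\chi_R|\le C/R$ yields $f\chi_R\to f$ in $H^1(\mu)$. It therefore suffices to approximate $f$ with compact essential support in $\bar\Omega$.

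Second, I fix an interior point $x_0\in\Omega$ and set $T_\lambda(y):=x_0+\lambda(y-x_0)$ for $\lambda\in(0,1)$. Convexity of $\Omega$ gives the key geometric estimate $d(T_\lambda(\bar\Omega),\partial\Omega)\ge(1-\lambda)\,d(x_0,\partial\Omega)>0$, so the dilate $f_\lambda(x):=f(T_\lambda^{-1}x)$ has essential support compactly contained in $\Omega$ at positive distance from $\partial\Omega$. To prove $f_\lambda\to f$ in $H^1(\mu)$ as $\lambda\to 1^-$, I further reduce via value-truncation $f\mapsto \max(-N,\min(N,f))$ (which converges to $f$ in $H^1(\mu)$ by dominated convergence and the identity $\nabla f^N=\mathbf{1}_{|f|<N}\nabla f$) to the case of bounded $f$ with compact essential support; for such $f$, the change of variables $y=T_\lambda^{-1}x$ turns the relevant integrals into $\int\!\cdot\,\rho(T_\lambda y)\,dy$, and log-concavity ensures $\rho$ is bounded on bounded subsets of its support, so classical continuity of dilations in $L^2(\RR^n,dx)$ transfers to $L^2(\mu)$-convergence on the compact region involved.

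Third, given $g\in H^1(\mu)$ with compact support $K\Subset\Omega$ at distance $\delta>0$ from $\partial\Omega$, I convolve with a standard mollifier $\eta_\epsilon$ of support radius $\epsilon<\delta$. The convolution $g*\eta_\epsilon$ then lies in $C_c^\infty(\Omega)\subset C_c^\infty(\RR^n)$, and on the compact neighborhood $K+B(0,\delta/2)\Subset\Omega$ the log-concave density $\rho$ is bounded above and below by positive constants, so the standard $L^2(\RR^n,dx)$-mollification results, combined with $\nabla(g*\eta_\epsilon)=(\nabla g)*\eta_\epsilon$, give $g*\eta_\epsilon\to g$ in $H^1(\mu)$ as $\epsilon\to 0^+$.

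The main obstacle is the dilation step. A direct mollification of $f$ (extended by zero to $\RR^n$) would fail in general because the distributional gradient of the extension acquires a singular contribution supported on $\partial\Omega$ whenever $f$ has a nonzero trace, and this contribution does not vanish in $L^2(\mu)$; the inward dilation is precisely what strictly contracts the support away from $\partial\Omega$ and removes this issue. The technical subtlety within the dilation itself is that the change of variables replaces the weight $\rho(x)$ by $\rho(T_\lambda y)$, which may be substantially larger than $\rho(y)$ near $\partial\Omega$ when $\rho$ vanishes at the boundary, so the reduction to bounded $f$ with compact essential support is essential: it places the problem inside $L^2(\RR^n,dx)$ on a bounded region where $\rho$ is uniformly bounded above, and there the classical continuity of dilations closes the argument.
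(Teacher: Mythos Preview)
Your three-step plan (truncate, dilate, mollify) matches the paper's, but your dilation goes in the \emph{opposite} direction, and this creates a genuine gap in the gradient part of the $H^1(\mu)$-convergence.

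Value-truncation makes $f$ bounded, hence $f\in L^2(dx)$, so the bound $\rho\le M$ on the compact region indeed transfers $L^2(dx)$-continuity of dilations to $L^2(\mu)$ for $f$ itself. But $\nabla f^N=\mathbf 1_{|f|<N}\nabla f$ is not bounded, and in general $\nabla f\notin L^2(dx)$ when $\rho$ vanishes on $\partial\Omega$. Concretely, take $\Omega=(-1,1)$, $\rho(x)=1-|x|$ (log-concave), and $f(x)=(1-|x|)^{1/2}$: then $f$ is bounded, $(f')^2\rho\equiv 1/4$ so $f\in H^1(\mu)$, yet $(f')^2=\tfrac14(1-|x|)^{-1}\notin L^1(dx)$. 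Your inward dilate $f_\lambda(x)=f(x/\lambda)$ moves the singularity of $f'$ from $\pm 1$ (where $\rho=0$) to $\pm\lambda$ (where $\rho=1-\lambda>0$), and a direct computation shows $\int |f_\lambda'|^2\rho\,dx=+\infty$; thus $f_\lambda\notin H^1(\mu)$ and the step fails outright. An $\eps/3$ rescue of the reduction would require $g\mapsto g_\lambda$ to be uniformly bounded on $L^2(\mu)$, but for your direction log-concavity gives $\rho(T_\lambda y)\ge\rho(y)^\lambda\rho(x_0)^{1-\lambda}$, the wrong inequality.

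The paper dilates outward instead: $f_\delta(x)=f((1-\delta)x)$, viewed on the enlarged domain $(1-\delta)^{-1}\Omega\supset\bar\Omega$. Log-concavity now yields the pointwise bound $\rho\!\left(\tfrac{y}{1-\delta}\right)\le\rho(y)\,(\rho(y)/\rho(0))^{\delta/(1-\delta)}\le C_R\,\rho(y)$ on the bounded support of $f$, so $g\mapsto g_\delta$ is uniformly bounded on $L^2(\mu)$. Combined with density of $C_c^\infty(\Omega)$ in $L^2(\mu)$ (proved separately), this gives $g_\delta\to g$ in $L^2(\mu)$ for \emph{every} $g\in L^2(\mu)$ with bounded support---in particular for $g=\partial_i f$ with no boundedness assumption. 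The mollification is then performed on $f_\delta$ over the larger domain, where $f_\delta\in H^1_{\mathrm{loc}}(dx)$ because $\rho$ is bounded below on compacta of $\Omega$.
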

Several textbooks are dedicated to the study of density of smooth functions in weighted Sobolev spaces (see e.g Kufner \cite{KUFNER}),
and they  consider more difficult situations. Nevertheless, we found it hard to spot a reasonably self-contained justification of the above proposition. This is why we include an ad-hoc proof, which relies only on very basic facts about density of smooth functions in $H^1_{\mathrm{loc}}(\Omega, dx)$
(see e.g. \cite[Chapter 5]{EVANSbook}). Local approximation in any compact subset of $\Omega$ is easy, since on such a subset  $\rho$ is upper bounded, and bounded away from $0$, hence the result for the Lebesgue measure applies. To derive approximation up to the boundary, one usually approximates $f$ by functions which are defined somewhat outside of $\Omega$, on which local approximation applies up to the boundary. To build such functions, when the boundary of $\Omega$ is regular enough, one usually proceeds by local translations of $f$. In our case,
since $\Omega$ is convex, a single global dilation does the job.

\begin{proof}[Proof of Proposition \ref{prop:density}]
Let us set some more notation. Our problem is invariant by translation. Hence we may assume that the origin $0\in \Omega$. The latter being open, there exists $r>0$ such that $B(0,r)\subset \Omega$.
Let $f$ be an arbitrary function in $H^1(\mu)$. Our goal is to build compactly-supported smooth functions which are arbitrarily close to $f$
in the $H^1(\mu)$ norm.

We first reduce matters to functions $f$ with compact support in $\RR^n$. Indeed, given a general $f\in H^1(\mu)$, consider a bump function $\theta:\RR^n\to [0,1]$ which is infinitely differentiable and such that $\theta(x)=1$ if $x\in B(0,1)$, while $\theta(x)=0$ if $x\not\in B(0,2)$.
For any integer $n\ge 1$, define
\[ f_{|n}(x):=\theta(x/n)f(x),\qquad x\in \RR^n.\]
It is supported in $B(0,2n)$ and belongs to $L^2(\mu)$ since $|f_{|n}|\le |f|$. By dominated convergence
\[  \|f-f_{|n} \|_{L^2(\mu)}^2=\int f(x)^2 (1-\theta(x/n))^2 d\mu(x)\]
tends to 0 when $n\to +\infty$. Since $\partial_i f_{|n}=
\theta(\cdot/n) \partial_if +\frac1n \partial_i\theta (\cdot/n)f$,
\[  \|\partial_if-\partial_if_{|n} \|_{L^2(\mu)}\le \frac1n \|f \partial_i\theta(\cdot/n) \|_{L^2(\mu)}+ \|\partial_if-(\partial_if)_{|n} \|_{L^2(\mu)}^2  \]
also tends to 0 when $n$ grows. Indeed, the functions $\partial_i\theta$ are uniformly bounded, and we may apply the latter convergence of truncated functions to $\partial_i f\in L^2(\mu)$. 

\begin{lemma}\label{lem:density-continuous}
The set $C_c^{\infty}(\Omega)$ is dense in $L^2(\mu)$.
\end{lemma}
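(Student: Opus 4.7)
The plan is to combine truncation to a compact subset of $\Omega$, local comparison of the weighted and Lebesgue $L^2$-norms, and standard mollification.

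First, since $\Omega$ is open and convex, I would fix an exhaustion $K_1\subset K_2\subset \cdots \subset \Omega$ of compact subsets with $\bigcup_j K_j=\Omega$ and $\mathrm{dist}(K_j,\partial\Omega)>0$ for each $j$ (e.g.\ $K_j=\{x\in\Omega:\mathrm{dist}(x,\partial\Omega)\ge 1/j,\ |x|\le j\}$). Because $\rho$ vanishes off $\overline{\Omega}$ and $\partial\Omega$ has Lebesgue measure zero (as $\Omega$ is convex), we have $\mu(\RR^n\setminus \bigcup_j K_j)=0$, so dominated convergence gives $\|f-f\mathbf{1}_{K_j}\|_{L^2(\mu)}\to 0$ for any $f\in L^2(\mu)$. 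This reduces the problem to approximating each truncation $f\mathbf{1}_{K_j}$ in $L^2(\mu)$.

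Second, I would exploit the fact that a log-concave density $\rho$ is continuous on the interior of its support (a standard consequence of convexity of $-\log\rho$), and is strictly positive on $\Omega$. Hence on any compact $K\subset\Omega$ there are constants $0<c_K\le C_K<\infty$ with $c_K\le\rho\le C_K$ on $K$. Consequently $L^2(\mu|_K)$ and the ordinary $L^2(K,dx)$ have equivalent norms. Applied with $K$ a slight thickening of $K_j$ still contained in $\Omega$ (say the closed $\delta_j$-neighborhood of $K_j$ with $\delta_j<\mathrm{dist}(K_j,\partial\Omega)$), this reduces the approximation of $f\mathbf{1}_{K_j}$ to an approximation in ordinary $L^2$.

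Third, I would mollify: extending $f\mathbf{1}_{K_j}$ by zero yields a function in $L^2(\RR^n,dx)$ (indeed in $L^2$ with compact support once we truncate its magnitude, or directly by the previous step), whose convolution $(f\mathbf{1}_{K_j})\ast\eta_\varepsilon$ with a standard smooth mollifier $\eta_\varepsilon$ is in $C^\infty_c(\RR^n)$, is supported in the $\varepsilon$-neighborhood of $K_j$, and converges to $f\mathbf{1}_{K_j}$ in $L^2(\RR^n,dx)$ as $\varepsilon\to 0$. Choosing $\varepsilon<\delta_j$ ensures the mollification lies in $C_c^\infty(\Omega)$, and the local equivalence of norms from the previous step upgrades the $L^2(dx)$-convergence to $L^2(\mu)$-convergence. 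A diagonal choice of $j\to\infty$ and $\varepsilon=\varepsilon(j)\to 0$ then produces the desired approximating sequence.

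There is no serious obstacle here; the only point requiring care is ensuring the mollification stays inside $\Omega$, which is why we interpose the thickened compact set between $K_j$ and $\partial\Omega$ and keep $\varepsilon$ smaller than that margin. Convexity of $\Omega$ makes the geometry of the exhaustion trivial, and continuity plus positivity of $\rho$ on $\Omega$ handles the weight.
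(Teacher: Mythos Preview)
Your proof is correct and follows essentially the same three-step strategy as the paper: truncate to a compact subset of $\Omega$ where $\rho$ is bounded above and below, pass to the unweighted $L^2(dx)$ via this local equivalence, and then invoke standard mollification. The only cosmetic difference is that the paper, having already fixed $0\in\Omega$, uses the dilations $(1-\varepsilon)\Omega\cap B(0,R)$ as its compact exhaustion rather than your distance-to-boundary sets $K_j$; your choice works just as well (and does not even need convexity).
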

\begin{proof}
By the above truncation argument, it is enough to approximate functions with compact support in $\RR^n$. Let $h\in L^2(\mu)$ be with support in the open ball $B(0,R)$ for some $R$. By dominated convergence,
\[ \lim_{\varepsilon\to 0^+} \int (f \mathbf 1_{(1-\varepsilon) \Omega}-f)^2 d\mu=0.\]
For $\varepsilon>0$, the set $\widetilde{\Omega}:=(1-\varepsilon)\Omega\cap B(0,R)$ is relatively compact in $\Omega$, hence there exists $c>0$ such that
$c\le \rho(x)\le \frac1c$ for all $x\in \widetilde{\Omega}$. Hence $
f \mathbf 1_{(1-\varepsilon) \Omega}\in L^2(\mu)$ also belongs to
the unweighted Lebesgue space $L^2(\widetilde{\Omega}, dx)$, in which it is classical that compactly supported smooth functions are dense.
Therefore there is a sequence $g_n\in C_c^{\infty}(\widetilde{\Omega})$ which converges to $f \mathbf 1_{(1-\varepsilon) \Omega}$ for the $L^2(\widetilde{\Omega}, dx)$-topology. Since $\rho\le \frac1c$ on $\widetilde{\Omega}$, and all functions are supported in $\widetilde{\Omega}$ the convergence also holds in the topology of $L^2(\mu)$.
\end{proof}

For $f\in L^2(\mu)$ and a parameter $\delta\in(0,\frac12)$ we introduce the dilated function $f_\delta$ defined by
\[ f_\delta(x):=f\big((1-\delta)x \big), \quad x\in \frac{1}{1-\delta}\Omega \supset \Omega.\]
These functions are defined outside of $\Omega$ but provide a fair approximation of $f$ for small $\delta$:
\begin{lemma}\label{lem:dilation}
Let $f\in L^2(\mu)$, with bounded support. Then for all $\delta\in(0,\frac12)$, $f_\delta\in L^2(\mu)$ and
when $\delta$ tends to 0, $f_\delta$ converges to $f$ in the topology of $L^2(\mu).$

If in addition, $f\in H^1(\mu)$ then the convergence holds in the topology of $H^1(\mu)$.
\end{lemma}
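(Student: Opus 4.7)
The plan is to first establish a uniform bound $\|f_\delta\|_{L^2(\mu)} \leq C_0\,\|f\|_{L^2(\mu)}$ valid for all $\delta \in (0, 1/2]$, and then obtain both convergences by approximating $f$ with $C_c^\infty(\Omega)$-functions via Lemma \ref{lem:density-continuous}. Writing $\mu = \rho(x)\,dx$ and recalling that $0 \in \Omega$, the change of variables $y = (1-\delta)x$ yields
\[ \|f_\delta\|_{L^2(\mu)}^2 = (1-\delta)^{-n} \int_{(1-\delta)\Omega} f(y)^2\, \rho\!\left(\tfrac{y}{1-\delta}\right) dy. \]
For $y \in (1-\delta)\Omega$, both $y/(1-\delta)$ and $0$ lie in $\Omega$, and the decomposition $y = (1-\delta)\cdot(y/(1-\delta)) + \delta\cdot 0$ combined with log-concavity of $\rho$ gives $\rho(y) \geq \rho(y/(1-\delta))^{1-\delta}\,\rho(0)^\delta$, hence
\[ \rho\!\left(\tfrac{y}{1-\delta}\right) \leq \rho(y) \cdot \bigl(\rho(y)/\rho(0)\bigr)^{\delta/(1-\delta)}. \]
Since $-\log \rho$ is convex on the open convex set $\Omega$, a standard midpoint argument shows it cannot diverge to $-\infty$ at $\partial \Omega$, so $\rho$ is bounded on any bounded subset of $\Omega$. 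For $f$ supported in $B(0, R)$, the integrand is non-zero only for $y$ in the bounded set $B(0, R) \cap (1-\delta)\Omega$, which gives the claimed uniform bound with a constant $C_0$ depending only on $R$.

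For $g \in C_c^\infty(\Omega)$, the support of $g_\delta$ is a small outward dilation of $\mathrm{supp}(g)$, still compact in $\Omega$ for $\delta$ small. Uniform continuity of $g$ on bounded sets then gives $g_\delta \to g$ uniformly, hence in $L^2(\mu)$. For general $f \in L^2(\mu)$ with bounded support in $B(0,R)$, the construction in the proof of Lemma \ref{lem:density-continuous} provides approximants $g \in C_c^\infty(\Omega)$ whose supports sit inside the same ball $B(0,R)$; the uniform bound for $(f-g)_\delta$ therefore uses the same constant $C_0 = C_0(R)$. The triangle inequality
\[ \|f_\delta - f\|_{L^2(\mu)} \leq \sqrt{C_0}\,\|f-g\|_{L^2(\mu)} + \|g_\delta - g\|_{L^2(\mu)} + \|g - f\|_{L^2(\mu)} \]
then gives the $L^2$-convergence upon choosing $g$ close to $f$ first and $\delta$ small second.

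For the $H^1$-convergence, an integration by parts against $\varphi \in C_c^\infty(\Omega)$, after the change of variable $y = (1-\delta)x$ and using $\tilde{\varphi}(y) := \varphi(y/(1-\delta)) \in C_c^\infty((1-\delta)\Omega) \subset C_c^\infty(\Omega)$, identifies the weak gradient of $f_\delta$ as $\nabla f_\delta = (1-\delta)\,(\nabla f)_\delta$. Since weak partial derivatives vanish where $f$ does, each $\partial_i f$ has bounded support and lies in $L^2(\mu)$, and the $L^2$-convergence just proved applies componentwise to $\nabla f$. Combining with the triangle inequality
\[ \|\nabla f_\delta - \nabla f\|_{L^2(\mu)} \leq (1-\delta)\,\|(\nabla f)_\delta - \nabla f\|_{L^2(\mu)} + \delta\,\|\nabla f\|_{L^2(\mu)}, \]
both terms tend to zero as $\delta \to 0^+$.

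The main subtlety is in the first step: the scaled point $y/(1-\delta)$ is farther from the origin than $y$ and may come close to $\partial \Omega$, where $\rho$ can be very small or vanish; a naive bound such as $\int f^2\, \rho(y/(1-\delta))\,dy \leq \|\rho\|_\infty \int f^2\, dy$ would require both that $\rho$ be bounded and that $\rho$ be bounded away from zero on $\mathrm{supp}(f)$, neither of which need hold. Log-concavity together with the assumption $0 \in \Omega$ provides exactly the right one-sided comparison $\rho(y/(1-\delta)) \leq \rho(y)\cdot(\rho(y)/\rho(0))^{\delta/(1-\delta)}$, which is what makes the dilation continuous on $L^2(\mu)$ at $\delta = 0$.
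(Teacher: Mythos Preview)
Your proof is correct and follows essentially the same route as the paper's: the change of variables together with the log-concavity inequality $\rho(y/(1-\delta)) \le \rho(y)\,(\rho(y)/\rho(0))^{\delta/(1-\delta)}$ to get a uniform $L^2$ bound, approximation by $C_c^\infty(\Omega)$ functions via Lemma~\ref{lem:density-continuous}, and the reduction of the $H^1$ case to the $L^2$ case through $\nabla f_\delta = (1-\delta)(\nabla f)_\delta$. One cosmetic slip: you state the uniform bound as $\|f_\delta\|_{L^2(\mu)} \le C_0\|f\|_{L^2(\mu)}$ but then write $\sqrt{C_0}$ in the triangle inequality.
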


\begin{proof}[Proof of Lemma \ref{lem:dilation}]
Assume that $f$ is supported in $B(0,R)$. Let us compute the squared $L^2$ norm of $f_\delta$:
\[ \int_\Omega f\big((1-\delta)x\big)^2 \rho(x) dx= (1-\delta)^{-n} \int_{(1-\delta)\Omega} f(y)^2\rho\Big( \frac{y}{1-\delta}\Big) dy.\]
The log-concavity of $\rho$ yields $\rho(y)\ge \rho\Big( \frac{y}{1-\delta}\Big)^{1-\delta} \rho(0)^\delta$. Rearranging gives
\[ \rho\Big( \frac{y}{1-\delta}\Big) \le \rho(y) \left( \frac{\rho(y)}{\rho(0)}\right)^{\frac{\delta}{1-\delta
}}.\]
Since $\rho$ is upper-bounded on the compact support of $f$
(see e.g., \cite[Lemma 2.2.1]{BOOKgreek}), there exists a constant $C_R$ such that for all $y$, $f(y)^2 \rho\Big( \frac{y}{1-\delta}\Big) \le C_R f(y)^2 \rho(y)$. Thus $\| f_\delta\|_{L^2(\mu)}\le 2^n C_R \| f\|_{L^2(\mu)}$.

\smallskip
For any $\varepsilon >0$, Lemma \ref{lem:density-continuous} provides $g\in C_c^{\infty}(\Omega)$ (supported also inside $B(0,R)$ as the proof of the lemma shows) such that $\|f-g\|_{L^2(\mu)}\le \varepsilon$. Then
\[ \| f-f_\delta\|_{L^2(\mu)} \le \| f-g\|_{L^2(\mu)} +\| g-g_\delta\|_{L^2(\mu)} +\| g_\delta-f_\delta\|_{L^2(\mu)}.\]
By the above norm estimate $ \| g_\delta-f_\delta\|_{L^2(\mu)}\le 2^n C_R\| g-f\|_{L^2(\mu)}$. Moreover since $g$ is uniformly continuous,
and $g_\delta$ as well as $g$ vanish outside of $B(0,2R)$,
\[ \| g_\delta-g\|_{L^2(\mu)}^2=\int_{B(0,2R)} \big|g(x)-g((1-\delta)x)\big|^2d\mu(x) \le  \mu(B(0,2R)) \omega_g(2R\delta)^2,\]
where $\omega_g$ denotes the modulus of continuity of $g$.
Combining the above estimates gives
\[\limsup_{\delta\to 0^+}  \| f-f_\delta\|_{L^2(\mu)} \le (1+2^n C_R)\varepsilon,\]
for every $\varepsilon>0$. This proves the convergence of $f_\delta$ to $f$.

\smallskip
Eventually, if $f\in H^1(\mu)$, observe that
\[ \| \partial_if-\partial_i(f_\delta)\|_{L^2(\mu)}= \| \partial_if-(1-\delta)(\partial_if)_\delta\|_{L^2(\mu)}\le \delta \|\partial_i f\|_{L^2(\mu)}+(1-\delta)  \| \partial_if-(\partial_i f)_\delta\|_{L^2(\mu)}\]
tends to 0 when $\delta$ does, by  the result that we just proved, applied to $\partial_i f \in L^2(\mu)$.
\end{proof}

We are now ready to complete the proof Proposition \ref{prop:density}.
As already explained, it is enough to approximate an arbitrary $f\in H^1(\mu)$ whose  support is contained in $B(0,R)$ for some $R$.
For $\delta \in (0,\frac12)$, we consider the dilated function $f_\delta$ defined on $(1-\delta)^{-1}\Omega$. The last ingredient is regularization by convolution: let $\eta:\RR^n\to \RR^+$ be a standard mollifier, meaning $\eta$ is of class $C^\infty$, $\eta(x)=0$ if $|x|\ge 1$ and $\int \eta(x) dx=1$. For $\varepsilon\in(0,1)$, consider $\eta^\varepsilon$ defined for $x\in \RR^n$
by
\[ \eta^\varepsilon(x)=\varepsilon^{-n} \eta\Big( \frac{x}{\varepsilon}\Big),\]
and the convolution $f_\delta\ast \eta^\varepsilon$.
Observe that $f_\delta\in H^1_{\mathrm{loc}}\big((1-\delta)^{-1}\Omega, dx \big)$. Indeed for any compact $K\subset (1-\delta)^{-1}\Omega$,
\[\int_K f_\delta(x)^2 dx=(1-\delta)^{-n} \int_{(1-\delta)K}f(x)^2dx \le C_K \int_{(1-\delta)K}f^2 \rho \le C_K\int f^2 d\mu<+\infty,\]
where we have used that $\rho$ attains a positive minimum on the compact set $(1-\delta)K\subset \Omega$. The same argument applies to the partial derivatives of $f$. Thus, according to \cite[Theorem 1 of Section 5.3]{EVANSbook}, $f_\delta\ast \eta^\varepsilon$ is well defined and infinitely differentiable on the set
\[ U_\varepsilon:=\Big\{ x\in (1-\delta)^{-1}\Omega; \; \mathrm{dist}\big(x, \big((1-\delta)^{-1}\Omega\big)^c\big)>\varepsilon\Big\}.\]
Moreover when $\varepsilon$ tends to 0, $f_\delta\ast\eta^\varepsilon$
tends to $f_\delta$ in $H^1_{\mathrm{loc}}\big((1-\delta)^{-1}\Omega, dx \big)$.

As $\Omega\cap B(0, 2R+1)\subset\subset (1-\delta)^{-1}\Omega$,
we can deduce that  when $\varepsilon$ tends to 0, $f_\delta\ast\eta^\varepsilon$
tends to $f_\delta$ in $H^1\big(\Omega\cap B(0,2R+1), dx \big)$.
Taking into account the fact that $f_\delta$ and $f_\delta\ast\eta^\varepsilon$ vanish outside of $B(0,2R+1)$ and that the log-concave
function $\rho$ is bounded from above in $\Omega \cap B(0,2R+1)$,
we can conclude that $\lim_{\varepsilon\to 0^+} \| f_\delta\ast\eta^\varepsilon-f_\delta\|_{H_1(\mu)}=0$.

To approximate the orginal function $f$ up to accuracy $\alpha>0$, we simply write
\[ \|f_\delta\ast \eta^\varepsilon-f \|_{H^1(\mu)}\le \| f_\delta\ast \eta^\varepsilon-f_\delta\|_{H^1(\mu)}+ \|f_\delta-f \|_{H^1(\mu) },\]
use Lemma~\ref{lem:dilation} to find a $\delta$ for which the last term is at most $\alpha/2$. Then we let $\varepsilon$ tend to zero.

Since $B(0,r)\subset \Omega$, the set $U_\varepsilon$ contains $((1-\delta)^{-1}-\frac{\varepsilon}{r})\Omega$ when $\varepsilon <r(1-\delta)^{-1}$. Consequently, if $\varepsilon<\delta^2(r(1-\delta))^{-1}$
then $(1+\delta)\Omega \subset  U_\varepsilon$. So the above approximations of $f$ are $C^\infty$ on a larger set than $\Omega$.
Since they also vanish outside of $B(0,2R+1)$, we may modify them outside of $\Omega$  in order to obtain functions in $C_c^\infty(\RR^n)$.
\end{proof}

\subsection{Proof of Lemma \ref{lem_1111}}

This section is devoted to the proof of Lemma \ref{lem_1111}.
We may assume that the support of $\mu$ is not contained in an affine subspace of lower dimension, as otherwise, we may just work in the lower dimensional subspace.
Proposition \ref{prop_1004} is proven above under the additional assumption that $\mu$ has a smooth density that is  positive everywhere  in $\RR^n$. Our goal here is to prove the inequality
\begin{equation}
\mathrm{Var}_{\mu}(f) \leq \sum_{i=1}^n \| \partial_i f \|^2_{H^{-1}(\mu)}
\label{eq_431} \end{equation}
 in the case of a general, log-concave, finite measure $\mu$ in $\RR^n$, and a general function $f \in L^2(\mu)$ whose weak partial derivatives $\partial^1 f, \ldots, \partial^n f$ belong to $L^2(\mu)$ and satisfy $\int \partial^i f d \mu = 0$. Recall the definition
 (\ref{eq_H1}) of the $H^1(\mu)$-norm, and that $H^1(\mu)$ is the space of $f \in L^2(\mu)$ with $\| f \|_{H^1(\mu)} < \infty$.
Recall from Proposition \ref{prop:density} that  the collection of all smooth, bounded, Lipschitz functions $u: \RR^n \rightarrow \RR$ is dense in $H^1(\mu)$.

\medskip
Next, we claim that both the left-hand side and the right-hand side of (\ref{eq_431}) depend continuously on the function  $f$
with respect to the $H^1(\mu)$-topology, as long as we keep the constraint $\int \partial_i f d \mu = 0$ for all $i$. Indeed, 
the $H^1(\mu)$-norm is stronger than the $L^2(\mu)$-norm, and hence  $\mathrm{Var}_{\mu}(f)$ is continuous in $f$ with respect to the $H^1(\mu)$-norm.
As for the right-hand side of (\ref{eq_431}), by inequality (\ref{eq_915}) above,
$$ \| \partial_i f - \partial_i \tilde{f} \|_{H^{-1}(\mu)} \leq C_p(\mu) \|
\partial_i f - \partial_i \tilde{f} \|_{L^2(\mu)} \leq C_p(\mu) \| f - \tilde{f} \|_{H^1(\mu)}. $$
It therefore suffices to prove (\ref{eq_431}) under the additional assumption that $f$ is a smooth function, bounded in $\RR^n$ together with its first partial derivatives, such that $\int f d \mu = 0$ and also $\int \partial_i f d \mu = 0$ for $i=1,\ldots,n$.

\begin{lemma} \label{lem:approx-mu-above}
	Let $\mu$ be a finite measure on $\RR^n$
	whose density $\rho$ is log-concave. Then
	there exists a sequence of functions $(\rho_k)_{k \geq 1}$  with the following properties:
	\begin{enumerate}
		\item[(i)] For any $k$, the function $\rho_k: \RR^n \rightarrow (0, \infty)$ is a
		smooth, everywhere-positive, integrable, log-concave function on $\RR^n$ such that $\rho \leq \rho_k$ pointwise.
		\item[(ii)] Write $S \subseteq \RR^n$ for the interior of the support of $\mu$, which is an open, convex set of a full $\mu$-measure. Then $\rho_k \longrightarrow \rho$ locally uniformly in $S$.	
		\item[(iii)]
		For any measurable function $\vphi: \RR^n \rightarrow \RR$ that grows at most polynomially at infinity,
		$$ \int_{\RR^n} \vphi \rho_k \stackrel{k \rightarrow \infty} \longrightarrow \int_{\RR^n} \vphi  \rho. $$
	\end{enumerate}
\end{lemma}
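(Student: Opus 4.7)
The plan is to build $\rho_k$ from $\rho = e^{-\psi}$, where $\psi: \RR^n \to \RR \cup \{+\infty\}$ is the convex function equal to $-\log \rho$ on $S$ and $+\infty$ elsewhere. The core idea is a two-step regularization: an inf-convolution with a quadratic, to obtain a $C^{1,1}$ approximation of $\psi$ from below, followed by a convolution with a mollifier to reach $C^{\infty}$. To avoid losing the pointwise domination $\tilde\psi_{k} \leq \psi$ during the second step, a small safety buffer must be built in to absorb the mollification error.

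More precisely, I would start by defining the Moreau--Yosida regularization
\[
\psi_{\varepsilon}(x) = \inf_{y\in \RR^{n}}\left\{\psi(y) + \frac{|x-y|^{2}}{2\varepsilon}\right\}.
\]
Standard properties of the Moreau envelope give that $\psi_{\varepsilon}$ is convex, real valued, of class $C^{1,1}$ with Hessian bound $\nabla^{2}\psi_{\varepsilon} \leq \varepsilon^{-1} I$, satisfies $\psi_{\varepsilon} \leq \psi$, and increases pointwise to $\psi$ as $\varepsilon \downarrow 0$. Its tail behaviour is inherited from that of $\psi$: since $\rho$ is an integrable log-concave density one has $\psi(y) \geq a|y| - b$ for some $a > 0$ and $b \in \RR$ when $|y|$ is large, and a direct minimisation argument yields a similar linear lower bound for $\psi_{\varepsilon}$, with slope nearly $a$ once $\varepsilon$ is small. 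This keeps $e^{-\psi_{\varepsilon}}$ integrable uniformly in small $\varepsilon$ with an exponential tail dominating any polynomial.

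Next, fix a smooth, even, non-negative, compactly supported mollifier $\eta_{\delta}$ with $\int \eta_{\delta} = 1$ and $\mathrm{supp}(\eta_{\delta}) \subseteq B(0,\delta)$, and set
\[
\tilde\psi_{\varepsilon} = \psi_{\varepsilon} \ast \eta_{\delta} - \varepsilon.
\]
Convolution with a non-negative kernel preserves convexity and produces $C^{\infty}$ regularity, while Jensen's inequality applied to the centred kernel $\eta_{\delta}$ gives $\psi_{\varepsilon} \ast \eta_{\delta} \geq \psi_{\varepsilon}$. The Hessian bound on $\psi_{\varepsilon}$ yields the quantitative reverse estimate $\psi_{\varepsilon} \ast \eta_{\delta} \leq \psi_{\varepsilon} + \delta^{2}/(2\varepsilon)$. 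Choosing $\delta = \varepsilon$, the subtracted $\varepsilon$ outweighs the $\varepsilon/2$ mollification error, so $\tilde\psi_{\varepsilon} \leq \psi_{\varepsilon} - \varepsilon/2 \leq \psi - \varepsilon/2$ on $S$, while $\tilde\psi_{\varepsilon} \geq \psi_{\varepsilon} - \varepsilon$ on $\RR^{n}$. Setting $\rho_{k} = e^{-\tilde\psi_{1/k}}$ then produces a smooth, strictly positive, integrable, log-concave density dominating $\rho$ pointwise, which gives (i).

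The remaining verifications are routine. For (ii), on any compact $K \subseteq S$ the function $\psi$ is continuous and the sequence $\psi_{1/k}$ increases monotonically to $\psi$, so Dini's theorem gives uniform convergence on $K$; squeezing $\tilde\psi_{1/k}$ between $\psi_{1/k} - 1/k$ and $\psi - 1/(2k)$ transfers this to $\rho_{k} \to \rho$ locally uniformly on $S$. For (iii), one has pointwise convergence $\rho_{k}(x) \to \rho(x)$ almost everywhere (on $S$ by (ii); for $x \notin \bar S$ the quadratic penalty in the Moreau envelope forces $\psi_{1/k}(x) \to +\infty$ since $d(x,S) > 0$; and $\partial S$ has Lebesgue measure zero), while the uniform bound $\rho_{k}(x) \leq e\cdot e^{-\psi_{1/k}(x)} \leq C e^{-a|x|/2}$ for $|x|$ large lets dominated convergence handle any polynomially growing $\vphi$. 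The main obstacle lies in the second step: quantitatively controlling the mollification error against the opposite tendency imposed by Jensen's inequality, which is exactly what the $C^{1,1}$ estimate on $\psi_{\varepsilon}$ and the coordinated choice $\delta = \varepsilon$ are designed to handle.
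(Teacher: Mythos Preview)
Your proof is correct and follows the same two-step architecture as the paper: first regularize $\psi$ from below by an inf-convolution, then mollify, inserting a safety subtraction calibrated so that the mollification error does not destroy the inequality $\tilde\psi \leq \psi$. The difference is in the kernel used for the inf-convolution. The paper takes the Pasch--Hausdorff envelope $\tilde\psi_k(x)=\inf_y\{\psi(y)+k|x-y|\}$, which is immediately $k$-Lipschitz; mollifying at scale $1/k^2$ then perturbs by at most $1/k$, absorbed by subtracting $1/k$. You take the Moreau--Yosida envelope, whose relevant regularity is the $C^{1,1}$ bound $\mathrm{Lip}(\nabla\psi_\varepsilon)\leq \varepsilon^{-1}$; this, together with evenness of the mollifier to kill the first-order term, gives the $\delta^2/(2\varepsilon)$ error you need. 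The paper's route is marginally more elementary (the Lipschitz property of an infimum of $k$-Lipschitz functions needs no theory), while yours invokes a standard but slightly heavier fact about Moreau envelopes; in exchange, your linear lower bound on $\psi_\varepsilon$ retains the full slope $a$ rather than just ``nearly $a$'' as you wrote, so your dominated-convergence majorant is in fact $Ce^{-a|x|}$ uniformly in $k$. Both arguments handle (ii) and (iii) in essentially the same way (Dini on compacta, measure-zero boundary, blow-up of the envelope off $\bar S$, exponential majorant).
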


Lemma~\ref{lem:approx-mu-above} will be proven shortly.
We apply the lemma  to $\mu$ and denote by $\mu_k$ the measure whose density is $\rho_k$. Let $\theta_k \in \RR^n$ and $\alpha_k \in \RR$ be such that $\tilde{f}_k(x) = f(x) + \langle \theta_k, x \rangle + \alpha_k$ satisfies
$$ \int_{\RR^n} \tilde{f}_k d \mu_k = 0,
\qquad \text{and} \qquad \int_{\RR^n} \partial_i \tilde{f}_k d \mu_k = 0 \quad (i=1,\ldots,n).$$
 We deduce from  Item (iii) of Lemma~\ref{lem:approx-mu-above} that $\theta_k$
and $\alpha_k$ tend to zero as $k \rightarrow \infty$. It also follows that
$$ \mathrm{Var}_{\mu_k}(\tilde{f}_k) \stackrel{k \rightarrow \infty}\longrightarrow \mathrm{Var}_{\mu}(f). $$
All that remains in order to complete the proof of Lemma \ref{lem_1111} is to prove that for $i=1,\ldots,n$ and $g = \partial^i f$,
\begin{equation}
\limsup_{k \rightarrow \infty} \| g - E_k(g) \|_{H^{-1}(\mu_k)} \leq \| g - E(g) \|_{H^{-1}(\mu)}. \label{eq_949}
\end{equation}
where $E_k(g) = \int_{\RR^n} g d \mu_k / \mu_k(\RR^n)$ and $E(g) = \int g d \mu / \mu(\RR^n)$.
We will actually prove (\ref{eq_949}) for any bounded function $g: \RR^n \rightarrow \RR$.
 Normalizing, we may assume that $\sup |g| \leq 1$.
Let $\eps > 0$. It suffices to prove that
\begin{equation}
\limsup_{k \rightarrow \infty} \| g - E_k(g) \|_{H^{-1}(\mu_k)} \leq \| g - E(g) \|_{H^{-1}(\mu)} + 2 \eps \cdot \left[ C_P(\mu) + \sup_k C_P(\mu_k) \right]. \label{eq_503}
\end{equation}
Indeed, $\sup_k C_P(\mu_k) < \infty$ (see  \cite{bobkov}). Let
$T \subset S$ be a compact, convex set with
$$ \mu(\RR^n \setminus T) < \eps^2 / 4. $$
Then there exists $k_0$ such that
$\mu_k(\RR^n \setminus T) < \eps^2 / 4$ for all $k \geq k_0$. Define
$ h = g \cdot 1_T $
where $1_T$ is the characteristic function of $T$, which equals one in $T$ and vanishes elsewhere. Then for all $k > k_0$,
$$ \| g - E(g) - h + E(h)  \|_{L^2(\mu)} < \eps \quad \textrm{and also} \quad \| g - E_k(g) - h + E_k(h) \|_{L^2(\mu_k)} < \eps. $$ In view of (\ref{eq_915}) above,
we see that  (\ref{eq_503}) would follow once we prove that
\begin{equation}
\limsup_{k \rightarrow \infty} \| h - E_k(h) \|_{H^{-1}(\mu_k)} \leq \| h - E(h) \|_{H^{-1}(\mu)}. \label{eq_506}
\end{equation}
However, $h$ is supported in the compact set $T \subset S$, where $S$ is an open set in which $\rho$ is positive. The convergence of $\rho_k$ to the density $\rho$ is uniform in $T$. For  $k \geq 1$ let
$u_k: \RR^n \rightarrow \RR$ be a locally-Lipschitz function in $L^2(\mu_k)$ with
$\int_{\RR^n} |\nabla u_k|^2 d \mu_k \leq 1$ and $\int u_k d \mu_k = 0$ and
$$ \| h - E_k(h) \|_{H^{-1}(\mu_k)} \leq \frac{1}{k} + \int_{\RR^n} h u_k d \mu_k. $$
Since $\rho_k \geq \rho$, necessarily $\int |\nabla u_k|^2 d \mu \leq 1$. Therefore,
\begin{align} \nonumber \| h - E(h) \|_{H^{-1}(\mu)} & \geq \int_{\RR^n} h u_k  d \mu = \int_{T} h u_k d \mu_k
+ \int_T h u_k (\rho - \rho_k)
\\ & \geq \| h - E_k(h) \|_{H^{-1}(\mu_k)} - \frac{1}{k}
- \frac{\sup_T |\rho_k - \rho|}{\inf_T \rho_k} \cdot \int_T |u_k| d \mu_k.
\label{eq_547}
\end{align}
Note that $\sup_T |\rho_k - \rho|$ tends to zero with $k$, while $\inf_T \rho_k$ is bounded away from zero for a sufficiently large $k$.
Moreover, $\left( \int_T |u_k| d \mu_k \right)^2 \leq \mu_k(\RR^n)\int_{\RR^n} u_k^2 d \mu_k \leq \sup_k  \mu_k(\RR^n)C_P(\mu_k) < \infty$. By letting $k$ tend to infinity, we thus obtain (\ref{eq_506}) from (\ref{eq_547}).
This  completes the proof of Lemma \ref{lem_1111}.

\begin{proof}[Proof of Lemma \ref{lem:approx-mu-above}] Set $\psi(x) = -\log \rho(x)$ for $x \in S$ and $\psi(x) = +\infty$ for $x \not \in S$. The function $\psi$ is convex in $\RR^n$,
and the integrability of $e^{-\psi}$ implies that there exists
$A \in (0,1)$ and $B > 0$ such that \begin{equation} \psi(x) \geq A |x| - B \qquad \text{for all} \ x \in \RR^n. \label{eq_1709} \end{equation}
See, e.g., \cite[Lemma 2.2.1]{BOOKgreek}  for a quick proof. For $k \geq 1$ denote
	\begin{equation} \tilde{\psi}_k(x) = \inf_{y \in S} \left[ \psi(y) + k |x - y| \right] \qquad \qquad \text{for} \ x \in \RR^n.
	\label{eq_1659}
	\end{equation}
	The function $\tilde{\psi}_k$ is a Lipschitz function in $\RR^n$, being the infimum of a family of $k$-Lipschitz functions. It is also convex,
	since it is the infimum-convolution of two convex functions (see, e.g., Rockafellar \cite[Section 5]{roc}).
	 Clearly $\tilde{\psi}_k \leq \psi$. From  (\ref{eq_1709}) and (\ref{eq_1659}), for any $k \geq 1$ and $x \in \RR^n$,
	\begin{equation} \tilde{\psi}_k(x) \geq \inf_{y \in S} [A |y| + k|x-y| - B] \geq
	\inf_{y \in S} [A |y| + A|x-y| - B]
	\geq A |x| - B. \label{eq_421} \end{equation}
	Fix a smooth probability density $\theta: \RR^n \rightarrow \RR$ supported in the unit ball $B(0,1)$. Write $\theta_{\eps}(x) = \eps^{-n} \theta(x / \eps)$ and define
	$$ \psi_k = \tilde{\psi}_k * \theta_{1/k^2} - 1 / k. $$
	The function $\psi_k$ is still $k$-Lipschitz and convex, since a convolution preserves this properties.
	 We claim that
	\begin{equation}
	\tilde{\psi}_k - 1/k \leq \psi_k \leq \tilde{\psi}_k \leq \psi \qquad \qquad \textrm{pointwise in} \ \RR^n. \label{eq_414}
	\end{equation}
	Indeed, since $\tilde{\psi}_k$ is convex and $\theta_{1/k^2}$ is a probability density, by Jensen's inequality,
	$$ \psi_k + 1 / k = \tilde{\psi}_k * \theta_{1/k^2} \geq \tilde{\psi}_k,  $$
	which implies the left-hand side inequality in (\ref{eq_414}). On the other hand, since $\tilde{\psi}_k$ is $k$-Lipschitz
	and $\theta_{1/k^2}$ is supported in the ball of radius $1/k^2$ centered at the origin in $\RR^n$,
	$$ \psi_k + 1/k = \tilde{\psi}_k * \theta_{1/k^2} \leq \tilde{\psi_k} + k / k^2 = \tilde{\psi}_k + 1/k, $$
	implying the inequality in the middle in (\ref{eq_414}). This completes the proof of (\ref{eq_414}), as we have already seen the right-hand side inequality in (\ref{eq_414}).
	
	\medskip Let us now set $\rho_k = \exp(-\psi_k)$. Since $\psi_k$ is a smooth, convex, Lipschitz function,
	the function $\rho_k$ is smooth, everywhere-positive and log-concave. It satisfies $\rho_k \geq \rho$ thanks to (\ref{eq_414}). The integrability of $\rho_k$ follows from (\ref{eq_421}) and (\ref{eq_414}), completing the proof of (i).
	
	\medskip The function $\psi$ is locally-Lipschitz in $S$ since it is convex. It thus follows from (\ref{eq_1659}) that
	$\tilde{\psi}_k$ tends to $\psi$ pointwise in $S$, as $k \rightarrow \infty$. According to \cite[Theorem 10.8]{roc}, the convergence is locally-uniform in $S$.
	Since $\tilde{\psi}_k$ tends to $\psi$ locally uniformly in $S$, we learn from (\ref{eq_414}) that also $\psi_k$ tends to $\psi$ locally uniformly in $S$. Consequently, $\rho_k \longrightarrow \rho$ locally uniformly in $S$, as stated in (ii). It remains to prove (iii). From
	(\ref{eq_1709}), (\ref{eq_421}) and (\ref{eq_414}),
	$$ \rho_k(x) \leq e^{B+1 - A |x|} \qquad \textrm{for all} \ k \geq 1, x \in \RR^n. $$
	Hence the function $|\vphi(x)| e^{B+1 - A |x|}$ is an integrable majorant for the sequence of functions $(\vphi \rho_k)_{k \geq 1}$ in $\RR^n$. In view of Lebesgue's dominated convergence theorem, all that remains in order to prove (iii) is to show that $\rho_k \longrightarrow \rho$ almost everywhere in $\RR^n$. We already know that $\rho_k \longrightarrow \rho$ in $S$. Since $S$ is a convex set, its boundary has a zero Lebesgue measure. Thus, it suffices to fix a point $x \in \RR^n$ which is not in the closure of $S$, and prove that
	\begin{equation} \rho_k(x) \stackrel{k \rightarrow \infty} \longrightarrow 0. \label{eq_458} \end{equation}
	There exists $\eps > 0$ such that the ball $B(x,\eps)$ is disjoint from $S$.
	It follows from (\ref{eq_1709}) and (\ref{eq_1659}) that $\tilde{\psi}_k(x) \geq k \eps - B$ for all $k$.
	From (\ref{eq_414}) we thus learn that
	$\psi_k(x) \geq k \eps - B - 1/k \longrightarrow \infty$ as $k \rightarrow \infty$. This implies (\ref{eq_458}), completing the proof of the lemma.	
\end{proof}


\begin{thebibliography}{10}

\bibitem{abBOOK}
David Alonso-Guti\'{e}rrez and Jes\'{u}s Bastero.
\newblock {\em Approaching the {K}annan-{L}ov\'{a}sz-{S}imonovits and variance
  conjectures}, volume 2131 of {\em Lecture Notes in Mathematics}.
\newblock Springer, Cham, 2015.

\bibitem{BGLbook}
Dominique Bakry, Ivan Gentil, and Michel Ledoux.
\newblock {\em Analysis and geometry of {M}arkov diffusion operators}, volume
  348 of {\em Grundlehren der Mathematischen Wissenschaften [Fundamental
  Principles of Mathematical Sciences]}.
\newblock Springer, Cham, 2014.

\bibitem{barthe-cordero}
F.~Barthe and D.~Cordero-Erausquin.
\newblock Invariances in variance estimates.
\newblock {\em Proc. Lond. Math. Soc. (3)}, 106(1):33--64, 2013.

\bibitem{barthe-wolff}
F.~Barthe and P.~Wolff.
\newblock Remarks on non-interacting conservative spin systems: the case of
  gamma distributions.
\newblock {\em Stochastic Process. Appl.}, 119(8):2711--2723, 2009.

\bibitem{bobkov-ledoux}
S.~Bobkov and M.~Ledoux.
\newblock Poincar\'{e}'s inequalities and {T}alagrand's concentration
  phenomenon for the exponential distribution.
\newblock {\em Probab. Theory Related Fields}, 107(3):383--400, 1997.

\bibitem{bobkov}
S.~G. Bobkov.
\newblock Isoperimetric and analytic inequalities for log-concave probability
  measures.
\newblock {\em Ann. Probab.}, 27(4):1903--1921, 1999.

\bibitem{bobkov-ledoux-BLBM}
S.~G. Bobkov and M.~Ledoux.
\newblock From {B}runn-{M}inkowski to {B}rascamp-{L}ieb and to logarithmic
  {S}obolev inequalities.
\newblock {\em Geom. Funct. Anal.}, 10(5):1028--1052, 2000.

\bibitem{brascamp-Lieb}
Herm~Jan Brascamp and Elliott~H. Lieb.
\newblock On extensions of the {B}runn-{M}inkowski and {P}r\'ekopa-{L}eindler
  theorems, including inequalities for log concave functions, and with an
  application to the diffusion equation.
\newblock {\em J. Functional Analysis}, 22(4):366--389, 1976.

\bibitem{BOOKgreek}
Silouanos Brazitikos, Apostolos Giannopoulos, Petros Valettas, and
  Beatrice-Helen Vritsiou.
\newblock {\em Geometry of isotropic convex bodies}, volume 196 of {\em
  Mathematical Surveys and Monographs}.
\newblock American Mathematical Society, Providence, RI, 2014.

\bibitem{bvdgBOOK}
Peter B\"{u}hlmann and Sara van~de Geer.
\newblock {\em Statistics for high-dimensional data}.
\newblock Springer Series in Statistics. Springer, Heidelberg, 2011.
\newblock Methods, theory and applications.

\bibitem{CFM}
D.~Cordero-Erausquin, M.~Fradelizi, and B.~Maurey.
\newblock The ({B}) conjecture for the {G}aussian measure of dilates of
  symmetric convex sets and related problems.
\newblock {\em J. Funct. Anal.}, 214(2):410--427, 2004.

\bibitem{eldan}
Ronen Eldan.
\newblock Thin shell implies spectral gap up to polylog via a stochastic
  localization scheme.
\newblock {\em Geom. Funct. Anal.}, 23(2):532--569, 2013.

\bibitem{ENT-mixtures}
Alexandros Eskenazis, Piotr Nayar, and Tomasz Tkocz.
\newblock Gaussian mixtures: entropy and geometric inequalities.
\newblock {\em Ann. Probab.}, 46(5):2908--2945, 2018.

\bibitem{EVANSbook}
Lawrence~C. Evans.
\newblock {\em Partial differential equations}, volume~19 of {\em Graduate
  Studies in Mathematics}.
\newblock American Mathematical Society, Providence, RI, 1998.

\bibitem{gentil-roberto}
Ivan Gentil and Cyril Roberto.
\newblock Spectral gaps for spin systems: some non-convex phase examples.
\newblock {\em J. Funct. Anal.}, 180(1):66--84, 2001.

\bibitem{harge}
Gilles Harg\'e.
\newblock A convex/log-concave correlation inequality for {G}aussian measure
  and an application to abstract {W}iener spaces.
\newblock {\em Probab. Theory Related Fields}, 130(3):415--440, 2004.

\bibitem{helffer}
Bernard Helffer.
\newblock Remarks on decay of correlations and {W}itten {L}aplacians. {III}.
  {A}pplication to logarithmic {S}obolev inequalities.
\newblock {\em Ann. Inst. H. Poincar\'e Probab. Statist.}, 35(4):483--508,
  1999.

\bibitem{huet}
Nolwen Huet.
\newblock Spectral gap for some invariant log-concave probability measures.
\newblock {\em Mathematika}, 57(1):51--62, 2011.

\bibitem{kls}
R.~Kannan, L.~Lov\'{a}sz, and M.~Simonovits.
\newblock Isoperimetric problems for convex bodies and a localization lemma.
\newblock {\em Discrete Comput. Geom.}, 13(3-4):541--559, 1995.

\bibitem{Kanter}
Marek Kanter.
\newblock Unimodality and dominance for symmetric random vectors.
\newblock {\em Trans. Amer. Math. Soc.}, 229:65--85, 1977.

\bibitem{klartag-unconditional}
Bo'az Klartag.
\newblock A {B}erry-{E}sseen type inequality for convex bodies with an
  unconditional basis.
\newblock {\em Probab. Theory Related Fields}, 145(1-2):1--33, 2009.

\bibitem{kolesnikov-milman}
Alexander~V. Kolesnikov and Emanuel Milman.
\newblock The {KLS} isoperimetric conjecture for generalized {O}rlicz balls.
\newblock {\em Ann. Probab.}, 46(6):3578--3615, 2018.

\bibitem{KUFNER}
Alois Kufner.
\newblock {\em Weighted {S}obolev spaces}.
\newblock A Wiley-Interscience Publication. John Wiley \& Sons, Inc., New York,
  1985.
\newblock Translated from the Czech.

\bibitem{latalaw}
R.~Lata\l a and J.~O. Wojtaszczyk.
\newblock On the infimum convolution inequality.
\newblock {\em Studia Math.}, 189(2):147--187, 2008.

\bibitem{lee-vempala}
Yin~Tat Lee and Santosh Vempala.
\newblock Eldan's stochastic localization and the {K}{L}{S} hyperplane
  conjecture: An improved lower bound for expansion.
\newblock {\em ArXiv 1612.01507}, 2016.

\bibitem{meyer-pajor}
Mathieu Meyer and Alain Pajor.
\newblock Sections of the unit ball of {$L^n_p$}.
\newblock {\em J. Funct. Anal.}, 80(1):109--123, 1988.

\bibitem{emanuel}
Emanuel Milman.
\newblock On the role of convexity in isoperimetry, spectral gap and
  concentration.
\newblock {\em Invent. Math.}, 177(1):1--43, 2009.

\bibitem{milman-pajor}
V.~D. Milman and A.~Pajor.
\newblock Isotropic position and inertia ellipsoids and zonoids of the unit
  ball of a normed {$n$}-dimensional space.
\newblock In {\em Geometric aspects of functional analysis (1987--88)}, volume
  1376 of {\em Lecture Notes in Math.}, pages 64--104. Springer, Berlin, 1989.

\bibitem{M_P}
Benjamin Muckenhoupt.
\newblock Hardy's inequality with weights.
\newblock {\em Studia Math.}, 44:31--38, 1972.
\newblock Collection of articles honoring the completion by Antoni Zygmund of
  50 years of scientific activity, I.

\bibitem{roc}
R.~Tyrrell Rockafellar.
\newblock {\em Convex analysis}.
\newblock Princeton Landmarks in Mathematics. Princeton University Press,
  Princeton, NJ, 1997.
\newblock Reprint of the 1970 original, Princeton Paperbacks.

\bibitem{roustant-b-i}
Olivier Roustant, Franck Barthe, and Bertrand Iooss.
\newblock Poincar\'e inequalities on intervals---application to sensitivity
  analysis.
\newblock {\em Electron. J. Stat.}, 11(2):3081--3119, 2017.

\bibitem{royen}
Thomas Royen.
\newblock A simple proof of the {G}aussian correlation conjecture extended to
  some multivariate gamma distributions.
\newblock {\em Far East J. Theor. Stat.}, 48(2):139--145, 2014.

\bibitem{sodin}
Sasha Sodin.
\newblock An isoperimetric inequality on the {$l_p$} balls.
\newblock {\em Ann. Inst. Henri Poincar\'{e} Probab. Stat.}, 44(2):362--373,
  2008.

\end{thebibliography}

\bigskip
\noindent Institut de Math\'eatiques de Toulouse, CNRS UMR 5219, Universit\'e Paul Sabatier,
31062 Toulouse Cedex 09, France.

\hfill \verb"barthe@math.univ-toulouse.fr"

\bigskip
\noindent Department of Mathematics, Weizmann Institute of Science, Rehovot 76100 Israel.

\smallbreak
\hfill \verb"boaz.klartag@weizmann.ac.il"

\end{document}